\newcommand{\PreserveBackslash}[1]{\let\temp=\\#1\let\\=\temp}
\newcolumntype{C}[1]{>{\PreserveBackslash\centering}p{#1}}
\newcolumntype{R}[1]{>{\PreserveBackslash\raggedleft}p{#1}}
\newcolumntype{L}[1]{>{\PreserveBackslash\raggedright}p{#1}}
\newlength{\Oldarrayrulewidth}
\newcommand\pv[1]{\ensuremath{\mathsf{#1}}}
\newcommand\Cl[1]{\ensuremath{\mathcal{#1}}}
\newcommand\enl[1]{\ensuremath{\tilde{\mathcal{#1}}}}
\DeclareMathOperator{\Dom}{Dom}
\DeclareMathOperator{\V}{V}
\DeclareMathOperator{\Epi}{Epi}
\DeclareMathOperator{\Sub}{S}
\let\embedsin=\hookrightarrow
\renewcommand{\geq}{\geqslant}
\renewcommand{\leq}{\leqslant}
\renewcommand{\ge}{\geqslant}
\renewcommand{\le}{\leqslant}
\let\op=\llbracket
\let\cl=\rrbracket
\newtheorem{thm}{Theorem}[section]
\newtheorem{lemma}[thm]{Lemma}
\newtheorem{cor}[thm]{Corollary}
\newtheorem{prop}[thm]{Proposition}
\newtheorem{fact}[thm]{Fact}
\numberwithin{equation}{section}
\date{}
\begin{document}
\title{Epimorphisms and pseudovarieties of semigroups}

\author{Jorge Almeida}
\address{CMUP, Dep.\ Matem\'atica, Faculdade de Ci\^encias,
  Universidade do Porto, Rua do Campo Alegre 687, 4169-007 Porto,
  Portugal}
\email{jalmeida@fc.up.pt}

\author{Aftab Hussain Shah}
\address{Department of Mathematics, Central University of Kashmir, Ganderbal-191131, India}
\email{aftab@cukashmir.ac.in}

\keywords{epimorphism, semigroup, pseudovariety}
\subjclass[2020]{20M07, 20M50}

\begin{abstract}
  For each of the following conditions, we characterize the
  pseudovarieties of semigroups $\pv V$ that satisfy it: (i) every
  epimorphism to a member of~\pv V is onto; (ii) every epimorphism to
  a finite semigroup with domain a member of~\pv V is onto; (iii) for
  every epimorphism $S\to T$ with $S$ in~$\pv V$ and $T$ finite, $T$
  is also a member of~$\pv V$.
\end{abstract}

\maketitle


\section{Introduction and preliminaries}
\label{sec:intro}

Pseudovarieties of semigroups emerged from work on the theory of
finite semigroups and its applications in computer science mainly
through the framework for classifying rational languages by their
syntactyic properties proposed by Eilenberg
\cite[Chapter~VII]{Eilenberg:1976}. Recall that a \emph{variety of
  semigroups} is a class of semigroups that is closed under taking
homomorphic images, subsemigroups, and arbitrary direct products,
while a \emph{pseudovariety of semigroups} is a class of finite
semigroups that is closed under taking homomorphic images,
subsemigroups, and finite direct products. In particular, the subclass
of a variety consisting of its finite members is a pseudovariety, but
there are many important pseudovarieties, like that of all finite
groups, that are not obtained in this way. Yet, there is an analog for
pseudovarieties of Birkhoff's characterization of varieties as the
classes defined by identities \cite{Birkhoff:1935} which is due to
Reiterman \cite{Reiterman:1982}: pseudovarieties are defined by
\emph{pseudoidentities}. In the present paper we only deal with a very
restricted kind of pseudoidentities, which can be viewed as identities
in an enriched signature obtained by adding to binary multiplication
the $\omega$-power, which is interpreted in a finite semigroup $S$ by
letting, for each $s\in S$, $s^\omega$ be the only idempotent power
of~$s$. See \cite{Almeida:1994a} or~\cite{Almeida:2003cshort} for
background and details and \cite{Almeida:2025} for a recent survey.

Let $\mathcal{C}$ be a category, a morphism $\alpha$ of $\mathcal{C}$
is said to be an \emph{epimorphism}
if, for every pair of morphisms $\beta$ and $\gamma$ in $\mathcal{C}$,
$\alpha\beta=\alpha\gamma$ implies $\beta=\gamma$. If $\mathcal{C}$ is
a concrete category, then it is routine to check that every onto
morphism is an epimorphism. However, the converse is not true in the
category of semigroups, as explained further below.

Let $U$ be a subsemigroup of a semigroup $S$. Suppose that the
inclusion mapping $U\embedsin S$ is an epimorphism, we may think of
$U$ as a ``large'' or a ``dominating'' part of $S$ in the sense that
the action of any morphism on $S$ is determined by its action on $U$.
Isbell \cite{Isbell:1966} generalized this idea by defining the
\emph{dominion of $U$ in $S$}, denoted $\Dom(U,S)$, to consist of all
elements of $S$ \emph{dominated by} $U$, i.e.,
\begin{displaymath}
  \Dom(U,S)=\{d\in S :
  \forall \alpha, \beta : S\to T\ 
  \bigl( (\forall u\in U\ u\alpha=u\beta )
  \Rightarrow d\alpha =d\beta\bigr)\}.
\end{displaymath}
Note that a morphism $\alpha: S\to T$ is an epimorphism if and only if
the embedding $S\alpha \embedsin T$ is an epimorphism and the latter
condition holds if and only if $\Dom(S\alpha,T)=T$. It is easy to see
that $\Dom(U,S)$ is a subsemigroup of $S$ containing~$U$.
We say that $U$ is \emph{epimorphically embedded in} $S$ if $\Dom(U,
S)=S$.
We also say that $U$ is \emph{saturated}
if $U$ cannot be properly epimorphically embedded in any containing
semigroup $S$, that is, $\Dom(U,S)\neq S$ for every properly
containing semigroup $S$.
A class $\mathcal{C}$ of semigroups is said to be
\emph{saturated} if each member of $\mathcal{C}$ is
saturated. We say that $\mathcal{C}$ is \emph{epimorphically closed}
if $S\in \mathcal{C}$ and $\alpha : S \to T$ is an epimorphism implies
$T\in \mathcal{C}$, which is equivalent to the following statement in
case $\Cl C$ is closed under taking homomorphic images: for every
semigroup $S$ with a subsemigroup $U\in\Cl C$ such that $\Dom(U,S)=S$,
we have $S\in\Cl C$. Also, if $\mathcal{C}$ is saturated and closed
under taking homomorphic images, then every epimorphism with domain a
member of $\mathcal{C}$ is onto.
Note that every saturated variety is epimorphically closed but the
converse fails for instance for the variety of all semigroups.

We say that a finite semigroup $S$ is \emph{F-saturated} if, for every
finite oversemigroup $T$, we have $\Dom(S,T)\ne T$. A class of finite
semigroups is \emph{F-saturated} if all its members are F-saturated.
We also say that a class $\Cl C$ of finite semigroups is
\emph{F-epimorphically closed} if, for every epimorphism $S\to T$ to a
finite semigroup, if $S$ belongs to~$\Cl C$ then so does $T$. We do
not know whether these properties are strictly weaker than the
corresponding versions without the prefix F.

The following result is a key tool in investigating epimorphisms and
dominions in the category of semigroups \cite{Isbell:1966,Howie:1995}.

\begin{thm}[Isbell's Zigzag Theorem]
  \label{t:zigzag}
  Let $U$ be a subsemigroup of a semigroup $S$ and $d\in S$. Then
  $d\in\Dom(U,S)$ if and only if $d\in U$ or there exists a series of
  factorizations as follows:
  \begin{equation}
    \label{eq:Isbell-zigzag}
    \left\{
    \begin{aligned}
      d&= x_{1}u_{1},
      & u_{1}&=v_{1}y_1\\
      x_{i-1}v_{i-1}&= x_{i}u_{i},
      & u_{i}y_{i-1}&= v_{i}y_{i}\quad (i=2,\ldots,m-1) \\
      x_{m-1}v_{m-1}&=u_{m},
      &u_{m}y_{m-1}&= d;
    \end{aligned}
    \right.
  \end{equation}
  where $u_i, v_i\in U, x_i, y_i\in S$ whenever $1\leq i\leq m$.
\end{thm}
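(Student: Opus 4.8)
The plan is to prove the two implications separately; the forward (``only if'') direction is where the work lies.

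For the ``if'' direction I would first dispose of the trivial case $d\in U$, where any two morphisms agreeing on $U$ already agree at $d$. Otherwise, given a zigzag as in~\eqref{eq:Isbell-zigzag} and morphisms $\alpha,\beta\colon S\to T$ with $u\alpha=u\beta$ for all $u\in U$, I would compute $d\alpha$ by running down the columns of~\eqref{eq:Isbell-zigzag}: at each step one trades the $\alpha$-image of a member of $U$ for its $\beta$-image (or conversely) and uses one of the displayed equations to advance, as in $d\alpha=(x_1\alpha)(u_1\alpha)=(x_1\alpha)(u_1\beta)=(x_1\alpha)(v_1\beta)(y_1\beta)=(x_1v_1)\alpha\,(y_1\beta)=(x_2u_2)\alpha\,(y_1\beta)=\cdots$. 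A short induction on the number of columns then shows the expression telescopes down to $(u_my_{m-1})\beta=d\beta$, so $d\in\Dom(U,S)$.

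For the ``only if'' direction it is enough to produce one semigroup $T$ and one pair of morphisms $\alpha,\beta\colon S\to T$ agreeing on $U$ for which $d\alpha=d\beta$ can hold only when $d\in U$ or a zigzag exists; the hypothesis $d\in\Dom(U,S)$ then supplies $d\alpha=d\beta$ and finishes the proof. The classical device is the tensor product $\Sigma\coloneqq S^1\otimes_{U^1}S^1$ of $S^1$ with itself over the monoid $U^1$, i.e.\ the quotient of $S^1\times S^1$ by the equivalence relation generated by the elementary moves $su\otimes t=s\otimes ut$ ($u\in U^1$), which carries commuting left and right $S^1$-actions and in which $1\otimes u=u\otimes 1$ for every $u\in U$. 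I would build a semigroup $T$ out of $S$ and $\Sigma$ --- with $\Sigma$ (or the portion of the amalgam of two copies of $S$ over $U$ that $\Sigma$ realises) acted on by $S$ through these actions --- together with morphisms $\alpha,\beta\colon S\to T$ coinciding on $U$, arranged so that the equality $d\alpha=d\beta$ amounts precisely to $1\otimes d=d\otimes 1$ in $\Sigma$. Since two tensors agree in $\Sigma$ exactly when they are joined by a finite chain of elementary moves $su\otimes t\leftrightarrow s\otimes ut$, I would then spell out such a chain from $d\otimes 1$ to $1\otimes d$ and read off the successive moves; after normalising so that all middle factors lie in $U$ and the outer ones in $S$, and after clearing the adjoined identities, this produces exactly a staircase of the shape~\eqref{eq:Isbell-zigzag} (the case $d\in U$ corresponding to the empty chain), and performing the moves in reverse recovers the computation of the ``if'' part.

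The two points I expect to be delicate are: (i) getting the auxiliary semigroup $T$ right --- checking that its multiplication is well defined (compatible with the defining relations of $\Sigma$) and associative, so that $\alpha$ and $\beta$ are genuine morphisms that agree on $U$; equivalently, identifying exactly which quotient of the amalgam of two copies of $S$ over $U$ is detected by $\Sigma$; and (ii) the normalisation in the final step, that is, coercing an arbitrary sequence of elementary moves into the precise staircase form~\eqref{eq:Isbell-zigzag}, with due care for the first and last rows and for occurrences of the adjoined identity. Everything else amounts to routine bookkeeping.
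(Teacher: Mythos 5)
This theorem is quoted in the paper as a classical result, with citations to Isbell and to Howie's book; the paper itself contains no proof, so there is nothing internal to compare your argument against. Measured against the published proofs, your plan is the standard one. The ``if'' direction as you describe it is complete and correct: the telescoping computation $d\alpha=(x_1\alpha)(u_1\beta)=(x_1\alpha)(v_1\beta)(y_1\beta)=(x_1v_1)\alpha\,(y_1\beta)=\cdots=(u_my_{m-1})\beta=d\beta$ is exactly how one checks that a zigzag forces domination, and the induction over the rows is routine.

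For the ``only if'' direction you have correctly identified the classical route (Stenstr\"om's tensor-product argument, as presented in Howie): characterize $\Dom(U,S)$ as the set of $d$ with $d\otimes 1=1\otimes d$ in $S^1\otimes_{U^1}S^1$, then convert a chain of elementary moves joining $d\otimes 1$ to $1\otimes d$ into a zigzag. However, the two points you flag as ``delicate'' are not peripheral --- they are essentially the entire content of the hard direction. For (i), there is no off-the-shelf semigroup $T$ with the property you want: the naive triangular-matrix device fails because the tensor product carries no addition, and the published arguments spend several pages either building the amalgamated free product $S *_U S$ and relating the identification of the two copies of $d$ to the tensor equality, or constructing a bespoke semigroup on a set such as $S\sqcup\Sigma$ and verifying well-definedness and associativity case by case. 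For (ii), an arbitrary chain of elementary moves $su\otimes t\leftrightarrow s\otimes ut$ need not alternate directions; coercing it into the staircase \eqref{eq:Isbell-zigzag}, merging consecutive moves in the same direction and clearing the adjoined identities in the first and last rows, requires its own induction. So what you have is a correct and well-informed outline of the standard proof rather than a self-contained proof; to make it stand alone you would need to carry out (i) and (ii) in full.
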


The equations \eqref{eq:Isbell-zigzag} with the $u_i$ and $v_i$ in~$U$
(and the $x_i$ and $v_i$ in~$S$) are said to constitute a \emph{zigzag
  for $d$ (in~$S$) over~$U$}. The elements $x_i,y_i,u_i,v_i$ are said
to be the \emph{factors} of the zigzag and the $u_i,v_i$ are further
said to be the factors from~$U$; the sequence
$(u_1,v_1,u_2,\ldots,v_{m-1},u_m)$ is called the \emph{spine} of the
zigzag. The number $m$ is the \emph{length} of the zigzag. If we add
an extra identity element $1$ to $S$ and let $x_m=y_0=1$, then we have
the factorizations
\begin{equation}
  \label{eq:Isbell-zigzag-alt}
  d
  =x_i\cdot u_iy_{i-1}
  =x_i\cdot v_iy_i
  =x_iv_i\cdot y_i
  =x_{i+1}u_{i+1}\cdot y_i
  \quad(i=1,\ldots,m).
\end{equation}
The following are useful observations regarding
zigzags~\cite{Khan:1985b,Higgins:1986}:
\begin{enumerate}[label=(O\arabic*)]
\item\label{item:O1} if the length of the
  zigzag~\eqref{eq:Isbell-zigzag} is minimum, then none of the factors
  $x_i,y_i$ belongs to~$U$;
\item\label{item:O2} if there is a factorization $y_i=wy'_i$ with
  $w\in U$ and $y'_i\in S\setminus U$, then we may replace $v_i$ by
  $v_iw$, $u_{i+1}$ by $u_{i+1}w$, and $y_i$ by $y'_i$; the dual
  observation holds for factorizations $x_i=x'_iw$ with $w\in U$ and
  $x'_i\in S\setminus U$.
\end{enumerate}

For a semigroup $S$, $E(S)$ denotes the set consisting of its
idempotents. The following lemma formalizes for later reference simple
consequences of the above observations.

\begin{lemma}
  \label{l:O3,4}
  Let $S$ be a semigroup and $U$ a finite subsemigroup such that
  $\Dom(U,S)=S$. Suppose that (\ref{eq:Isbell-zigzag}) is a zigzag
  in~$S$ over~$U$ of minimum length. Then, the following properties
  hold, where $E=E(U)$:
  \begin{enumerate}[label=(\arabic*)]
  \item\label{item:O3} without changing the length of the zigzag, we
    may modify it so that all but the factors $u_1$ and $u_m$ in the
    spine belong to the subsemigroup $EUE$ of~$U$, while $u_1\in EU^r$
    and $u_m\in U^rE$ for every $r\ge1$;
  \item\label{item:O4} $\Dom(EUE,ESE)=ESE$.
  \end{enumerate}
\end{lemma}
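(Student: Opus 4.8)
The plan is to prove part (1) first, using the observations (O1)--(O2) together with the finiteness of $U$, and then derive part (2) as a fairly direct consequence. For part (1), fix a minimum-length zigzag~\eqref{eq:Isbell-zigzag} and work with the augmented form~\eqref{eq:Isbell-zigzag-alt}, so that $x_m=y_0=1$. Since $U$ is finite, the subsemigroup generated by any element of $U$ contains an idempotent; more precisely, for every $u\in U$ there is an $r\ge 1$ with $u^r\in E$, and $E$ acts as a set of ``local identities'' within $U$. The idea is to insert such idempotent powers at the right spots of the spine and absorb them using~\ref{item:O2}. Concretely, I would argue as follows. Look at a spine factor $v_i$ (with $1\le i\le m-1$). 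From~\eqref{eq:Isbell-zigzag-alt} we have $d = x_i v_i y_i$, and we want $v_i\in EUE$. Pick $r$ so that, simultaneously for all relevant factors, $r$-th powers land in $E$. The trick is to replace $y_i$ by $y_i' $ obtained by noticing $u_{i+1}y_i = v_{i+1}y_{i+1}$, iterate the zigzag relations, and use~\ref{item:O2} to move an idempotent factor of $U$ from the $y$-side onto $v_i$ and $u_{i+1}$; dually on the $x$-side one moves an idempotent onto $v_i$ and $u_i$. After finitely many such moves the length is unchanged (we never alter the number of steps, only the factors), and every spine factor except possibly the two extreme ones $u_1$ (which sits at the very left, multiplied only on the right inside $U$) and $u_m$ (at the very right) has acquired an idempotent factor on each side, i.e.\ lies in $EUE$; and $u_1\in EU$, hence $u_1\in EU^r$ since $E\subseteq U$, and similarly $u_m\in U^rE$.

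The main obstacle is bookkeeping: the moves of~\ref{item:O2} on one spine factor change neighbouring factors, so one must order the idempotent insertions so that they do not undo each other, and one must check that the hypotheses of~\ref{item:O2} (namely that the leftover factor $y_i'$ or $x_i'$ genuinely lies in $S\setminus U$) remain satisfied — here~\ref{item:O1}, which guarantees $x_i,y_i\notin U$ in a minimum-length zigzag, is what keeps us honest. A clean way to organize this is to process the spine from the outside in, or to observe that once an idempotent $e\in E$ has been absorbed into $v_i$ from the right as $v_i e$, a further left-absorption $f v_i e$ does not disturb the right factor; and to note that taking a uniform $r$ with $u^r\in E$ for all $u$ in a suitable finite set of relevant products makes all the powers simultaneously idempotent. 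One should also remark that the claim ``$u_1\in EU^r$ for every $r\ge 1$'' really means $u_1\in E(U^r)$ in the sense that $u_1$ can be written with an idempotent on the left and an arbitrarily long product on the right, which follows because $E U = E U \cdot e_0$-type expansions are available from $U$ being a finite semigroup with $EU = E U^k$ for all $k$ once we have $E U \subseteq U = U^k$ stabilized; in fact the cleanest statement is $u_1 \in E U$ and $U = U^r$ for all $r\ge 1$ since $U$ is a semigroup, giving $u_1\in E U^r$ trivially.

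For part (2), the strategy is to take an arbitrary $d\in ESE$ and produce a zigzag for it over $EUE$ inside $ESE$. Start from $\Dom(U,S)=S$: certainly $d\in S = \Dom(U,S)$, so $d$ has a zigzag over $U$ in $S$, which by part (1) we may take in the normalized form where all spine factors except $u_1,u_m$ lie in $EUE$ and $u_1\in EU$, $u_m\in UE$. Now write $d = ede$ for the relevant idempotents $e$ on each side (if $d\in ESE$ then $d = e_1 d e_2$ with $e_1,e_2\in E$, and absorbing we may as well assume a single $e$ with $ede = d$ by replacing $e_1,e_2$ suitably — or just carry two idempotents through). Multiply the whole zigzag on the left by $e_1$ and on the right by $e_2$: from $d = x_1 u_1$, $u_1 = v_1 y_1$, \dots, $u_m y_{m-1} = d$ we get $d = e_1 d e_2 = (e_1 x_1)(u_1 e_2')\cdots$, and the point is that $e_1 x_1\in ESE$ after also inserting an $E$-idempotent on its right (available because $x_1 v_1 = x_2 u_2$ and we can push an idempotent across), the spine factors $e_1 v_i e_2$ etc.\ stay in $EUE$, and $e_1 u_1$, being $e_1\cdot(\text{something in }EU)$, lies in $EUE$ too once we similarly cap it on the right — here one uses that $u_1$ appears in $d = x_1 u_1$ with $d\in ESE$ forcing, modulo an~\ref{item:O2}-style move, a right idempotent on $u_1$. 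Thus every factor of the new zigzag lies in $ESE$ (for the $x_i,y_i$) or $EUE$ (for the spine), witnessing $d\in\Dom(EUE,ESE)$. Since $d\in ESE$ was arbitrary and $EUE\subseteq ESE$, we conclude $\Dom(EUE,ESE)=ESE$. I expect the extreme factors $u_1$ and $u_m$ — which in part (1) only get one idempotent side — to require the small extra argument just sketched to land them in $EUE$ rather than merely $EU$ or $UE$; this is the one place where one genuinely uses $d\in ESE$ and not just the normalization from part (1).
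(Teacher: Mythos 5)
There is a genuine gap in your argument for part~\ref{item:O3}: you never identify where the idempotents actually come from, and the mechanism you propose does not work. You suggest taking powers $u^r\in E$ of individual elements of $U$ and ``inserting such idempotent powers at the right spots of the spine'', but the zigzag equations give no way to insert a power: from $d=x_iu_iy_{i-1}$ one cannot pass to $d=x_iu_i^ry_{i-1}$. Observation~\ref{item:O2} only \emph{absorbs} a factor $w\in U$ that is already exhibited by a factorization $y_i=wy'_i$ (or $x_i=x'_iw$), and the zigzag relations for $d$ that you propose to ``iterate'' (such as $u_{i+1}y_i=v_{i+1}y_{i+1}$) factor the product $v_{i+1}y_{i+1}$, not $y_i$ itself; they never produce the factorization that \ref{item:O2} needs. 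The missing idea --- and the heart of the paper's proof --- is that each $x_i$ and $y_i$ lies in $S\setminus U=\Dom(U,S)\setminus U$ and therefore admits its \emph{own} zigzag, whose extremal equation peels off a factor from $U$ (e.g.\ $y_i=u^{(1)}y^{(1)}$ with $u^{(1)}\in U$ and $y^{(1)}\in S\setminus U$); iterating gives $y_i=u^{(1)}\cdots u^{(n)}y^{(n)}$ for arbitrarily large $n$, and a pigeonhole/Ramsey argument in the finite semigroup $U$ locates a \emph{consecutive block} $u^{(k)}\cdots u^{(\ell)}$ whose product is idempotent --- a product of distinct factors, not a power of a single element. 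Only then can \ref{item:O2} absorb the whole prefix $u^{(1)}\cdots u^{(\ell)}=(u^{(1)}\cdots u^{(k-1)})e$, which ends in an idempotent, into $v_i$ and $u_{i+1}$ (dually on the $x$-side). Without this step your proof has no source of idempotents adjacent to the spine.

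Two smaller points. The claim that ``$U=U^r$ for all $r\ge1$ since $U$ is a semigroup'' is false (take $U$ nilpotent, e.g.\ $U=\{a,0\}$ with $a^2=0$); one only has $U\supseteq U^2\supseteq\cdots$, and the reason $u_1\in EU^r$ holds for \emph{every} $r$ is precisely that the leading factor is idempotent, so that $u_1=ew=e(e^{j}w)$ for all $j\ge1$. Your outline of part~\ref{item:O4} --- write $d=edf$, multiply the normalized zigzag through by $e$ on the left and $f$ on the right, and use the idempotents flanking the middle spine factors to place each $x_i,y_i$ in $ESE$ and each spine factor (including the capped extremal ones $u_1f$ and $eu_m$) in $EUE$ --- is essentially the intended derivation and is fine, but it depends entirely on the normalization of part~\ref{item:O3} being established first, which is exactly where the gap lies.
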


\begin{proof}
  From~\ref{item:O1}, we see that every $d\in S\setminus U$ admits
  factorizations
  \begin{displaymath}
    d=x_1u_1=x'_1u'_1u_1=x''_1u''_1u'_1u_1=x^{(n)}_1u^{(n)}_1\cdots
    u'_1u_1
  \end{displaymath}
  with all $u^{(k)}_1\in U$ and all $x^{(n)}_1\in S\setminus U$. Since
  $U$ is finite, by a pigeonhole principle argument (see
  \cite[Lemma~10]{Higgins:1984}) or an application of Ramsey's Theorem
  (see \cite[Theorem~1.11]{Pin:1986;bk} or
  \cite[Exercise~5.4.3]{Almeida:1994a}), there are positive integers
  $k$ and $\ell$ such that $k\le\ell$ and $u^{(\ell)}_1\cdots
  u^{(k)}_1$ is idempotent. Combining with~\ref{item:O2}, we obtain
  the desired properties.
\end{proof}

As an example (this is \cite[Example 3.1]{Isbell:1966}), consider the
$2\times2$ Brandt aperiodic semigroup, which is given by the following
presentation,
\begin{displaymath}
  B_2=\langle a,b: aba=a,\ bab=b,\ a^2=b^2=0\rangle
\end{displaymath}
and its subsemigroup $\{ab,a,ba,0\}=B_2\setminus\{b\}$, which is the
semigroup $Y$ considered in~\cite[Section~6.5]{Almeida:1994a} and the
semigroup $B_0$ of~\cite{Lee:2004}. To show that $\Dom(Y,B_2)=B_2$, it
suffices to note that we have the following zigzag for $b$ in~$B_2$
over $Y$:
\begin{align*}
  b&= \underbrace{b}_{x_1}\cdot \underbrace{ab}_{u_1}
  & u_{1}&=\underbrace{a}_{v_1}\cdot \underbrace{b}_{y_1} \\
  x_1v_1&=\underbrace{b}_{x_2}\cdot \underbrace{a}_{u_2}
  & u_2y_1&=\underbrace{a}_{v_2}\cdot \underbrace{b}_{y_2} \\
  x_2u_2&=\underbrace{ba}_{u_3} & u_3y_2&= ba\cdot b =b
\end{align*}
so that, by Theorem~\ref{t:zigzag}, $Y$ is indeed epimorphically
embedded in~$B_2$. In abbreviated form, the above zigzag may be
implicitly described by the following sequence of factorizations for
$b$, where the underlined factors belong to~$Y$:
\begin{displaymath}
  b
  =b\cdot\underline{ab}
  =b\cdot\underline{a}\cdot b
  =\underline{ba}\cdot b.
\end{displaymath}

The natural question suggested by the above example is whether, at
least in terms of pseudovarieties, this is the only example that we
need to worry about to make sure epimorphisms are onto. More
precisely, for each of the following conditions, we are interested in
characterizing the pseudovarieties of semigroups \pv V such that
\begin{enumerate}[label=(\arabic*)]
\item\label{item:T1} every epimorphism to a member of~\pv V is onto;
\item\label{item:T2} \pv V is F-saturated;
\item\label{item:T3} \pv V is F-epimorphically closed.
\end{enumerate}
So, the natural questions become: (a) whether \ref{item:T1} is
equivalent to $B_2\notin\pv V$, (b) whether\ref{item:T2} is equivalent
to $Y\notin\pv V$, and (c) what are the F-epimorphically closed
pseudovarieties containing $Y$. We prove that the answers to~(a)
and~(b) are both affirmative and that the pseudovariety $\pv S$ of all
finite semigroups is the only pseudovariety containing $Y$ that is
F-epimorphically closed.

We assume that the reader is familiar with basic algebraic semigroup
theory, including topics such as Green's relations, stability, and
Rees matrix semigroups. The classical references are
\cite{Clifford&Preston:1961,Clifford&Preston:1967} but the reader may
prefer a more modern reference such as~\cite{Rhodes&Steinberg:2009qt}.

Let $S$ be a semigroup. Denote by $\leq$ and $<$ respectively the
$\mathcal{J}$-order and the strict $\Cl J$-order in $S$: $s\le t$ if
$t$ appears as a factor in some factorization of~$s$; $s<t$ if $s\le
t$ holds but $t\le s$ does not. The Green equivalence $\Cl J$ is the
intersection of the quasi-orders $\le$ and $\ge$. The $\Cl J$-class of
an element $s$ of $S$ will sometimes be denoted $J_s$; similar
notation may be adopted for the Green relations $\Cl L$, $\Cl R$, $\Cl
D=\Cl L\Cl R=\Cl R\Cl L$, and $\Cl H=\Cl L\cap\Cl R$, where two
elements are \emph{$\Cl L$-equivalent} if each of them is a factor of
the other on the left and $\Cl R$ is defined dually by replacing left
by right.

A semigroup is \emph{$\Cl D$-simple} if it has only one $\Cl D$-class.
A $\Cl D$-simple semigroup $S$ is \emph{completely simple} if
\begin{equation}
  \label{eq:primitivity}
  \forall e,f\in E(S)\ (ef=fe=e \implies e=f).
\end{equation}
If a semigroup $S$ has only two $\Cl D$-classes, one of which is
reduced to the zero element and property (\ref{eq:primitivity}) holds
whenever $e\ne0$, then it is said to be \emph{completely 0-simple}.
The structure of such semigroups has been reduced to that of groups
(cf.~\cite[Theorem~3.5]{Clifford&Preston:1961}) and plays a role in
several results in Section~\ref{sec:epi-embeddings} and also in
Section~\ref{sec:Vi}. A semigroup is \emph{completely regular} if
every element lies in a subgroup.

The following well-known result is used repeatedly throughout the
paper.

\begin{fact}[{\cite[Theorem~3]{Miller&Clifford:1956}}]
  \label{f:M-C}
  Let $S$ be a semigroup and let $s,t\in S$. Then, $st\in R_s\cap L_t$
  if and only if $L_s\cap R_t$ contains an idempotent.
\end{fact}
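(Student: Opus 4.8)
The plan is to give the standard short proof by direct manipulation of Green's relations. Throughout I write $S^1$ for $S$ with an identity $1$ adjoined (when $S$ is not already a monoid) and use that $p\mathrel{\Cl L}q$ means $S^1p=S^1q$ while $p\mathrel{\Cl R}q$ means $pS^1=qS^1$. In these terms, $st\in R_s$ amounts to $s\in stS^1$ (the inclusion $st\in sS^1$ being automatic, as $st=s\cdot t$), and $st\in L_t$ amounts to $t\in S^1st$; while an idempotent $e\in L_s\cap R_t$ is precisely an $e$ with $e=e^2$, $S^1e=S^1s$, and $eS^1=tS^1$.

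For the implication from left to right, assume $st\in R_s\cap L_t$, so there are $p,q\in S^1$ with $s=stp$ and $t=qst$. The one step requiring an idea is to notice that $e:=qs$ may be rewritten, using first $s=stp$ and then $qst=t$, as $e=qstp=tp$; thus $e$ is simultaneously a left multiple of $s$ and a right multiple of $t$, and we already have $e=qs\in S^1s$ and $e=tp\in tS^1$. Idempotency follows from associativity together with the two relations: $e^2=(qs)(tp)=q(stp)=qs=e$. It remains to upgrade the two one-sided containments to the equalities defining $\Cl L$ and $\Cl R$. From $e=tp$ we get $se=stp=s$, hence $s\in S^1e$, so $S^1s=S^1e$ and $e\mathrel{\Cl L}s$; dually, from $e=qs$ we get $et=qst=t$, hence $t\in eS^1$, so $tS^1=eS^1$ and $e\mathrel{\Cl R}t$. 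Thus $e$ is an idempotent in $L_s\cap R_t$.

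For the converse, assume $e=e^2\in L_s\cap R_t$ and fix $u,v,w,w'\in S^1$ with $s=ue$, $e=vs$, $t=ew$, and $e=tw'$. First I would record the two absorption identities $se=ue^2=ue=s$ and $et=e^2w=ew=t$. Then $s=se=s(tw')=(st)w'\in stS^1$ gives $st\mathrel{\Cl R}s$, and $t=et=(vs)t=v(st)\in S^1st$ gives $st\mathrel{\Cl L}t$; together these yield $st\in R_s\cap L_t$.

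I do not expect a genuine obstacle: the content is entirely elementary, the only non-bookkeeping observation being that $qs$ and $tp$ coincide and form the required idempotent in the left-to-right direction. The single point to watch is the sidedness of the $\Cl L$- and $\Cl R$-relations — equivalently, where the factors $1\in S^1$ have to be inserted — when $S$ happens to lack an identity.
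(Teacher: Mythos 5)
Your argument is correct and is the standard Miller--Clifford proof: the paper itself states this result as a Fact with a citation and gives no proof, so there is nothing to compare beyond noting that your construction of the idempotent $e=qs=tp$ and the converse via the absorption identities $se=s$, $et=t$ is exactly the classical argument from the cited source. The bookkeeping with $S^1$ is handled correctly in both directions.
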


The largest pseudovariety not containing $B_2$ is known to be the
class \pv{DS} of all finite semigroups in which every regular $\Cl
D$-class is a subsemigroup \cite[Theorem~3]{Margolis:1981}. We show in
Section~\ref{sec:epis-into-DS} that all epimorphisms to members
of~$\pv{DS}$ are onto. In contrast, there is no largest pseudovariety
not containing $Y$ but there are maximal such pseudovarieties, namely
the following three \cite[Proposition~11.8.1]{Almeida:1994a}, where we
adopt the convention that $e$ and $f$ denote arbitrary idempotents,
that is, $e=t^\omega$ and $f=z^\omega$, where $t$ and $z$ are ``new
variables'':
\begin{align*}
  \pv V_1&=\op(exf)^{\omega+1}=exf\cl \\
  \pv V_2&=\op exf(ef)^\omega=exf\cl \\
  \pv V_3&=\op (ef)^\omega exf=exf\cl.
\end{align*}
We show in Section~\ref{sec:Vi} that all members of these
pseudovarieties are F-saturated and, therefore a pseudovariety is
F-saturated if and only if it is contained in one of the $\pv V_i$.
Using this result, the classification of all F-epimorphically closed
pseudovarieties is complete once we show that there is only one that
is not F-saturated, namely~$\pv S$, which is achieved in
Section~\ref{sec:epiclosed}. In preparation of the results of
Sections~\ref{sec:epis-into-DS} and~\ref{sec:Vi}, we present in
Section~\ref{sec:epi-embeddings} some general statements about
epimorphic embeddings.

\section{Some general results on epimorphic embeddings}
\label{sec:epi-embeddings}

Stability plays a key role in this section. Recall that a semigroup
is: \emph{left stable} if $xs\mathrel{\Cl{J}}s$ implies
$xs\mathrel{\Cl{L}}s$; \emph{right stable} if $sx\mathrel{\Cl{J}}s$
implies $sx\mathrel{\Cl{R}}s$; \emph{stable} if it is both left and
right stable. It is well-known that finite semigroups are stable and
that $\Cl J=\Cl D$ in a stable semigroup.

By a \emph{$\Cl J$-maximal} element with a property \Cl P we mean an
element $s$ of $S$ with property~\Cl P such that every $t\in S$ with
$t>s$ fails property~\Cl P. The $\Cl J$-maximal elements in
$S\setminus U$ for a proper subsemigroup $U$ epimorphically embedded
in~$S$ play a key role in this paper. The first step is to show that,
in case $S$ is stable, they are regular in~$S$. The main ingredient in
the proof of this fact is the following technical lemma.

\begin{lemma}
  \label{l:J-max-non-regular}
  Let $S$ be a stable semigroup and let $U$ be a subsemigroup of~$S$.
  Suppose that $d$ is a non-regular element of~$S$ which is $\Cl
  J$-maximal in~$S\setminus U$. If $d=us$ (or $d=su$) with $u\in U$
  and $s\in S$, then $s\in J_d$ and $u>d$. It follows that $J_d\cap
  U=\emptyset$.
\end{lemma}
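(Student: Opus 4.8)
The plan is to treat the two factorizations $d=us$ and $d=su$ as left--right dual, argue the case $d=us$ in full, and then deduce $J_d\cap U=\emptyset$ by bootstrapping the first assertion of the lemma.

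First I would settle that $s\in J_d$ (and \emph{not}, as one might first guess, that $s$ drops strictly below~$d$). Since $d=us$, both $u$ and $s$ are factors of~$d$, so $d\le s$ and $d\le u$ in the $\Cl J$-order. Because $d\le s$, either $s\mathrel{\Cl J}d$ or $s>d$. In the second case, $\Cl J$-maximality of~$d$ in $S\setminus U$ forces $s\in U$, whence $d=us\in U$ as $U$ is a subsemigroup, contradicting $d\in S\setminus U$. Hence $s\mathrel{\Cl J}d$, i.e. $s\in J_d$.

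Next comes the heart of the matter, $u>d$. We already have $d\le u$, so it suffices to exclude $u\mathrel{\Cl J}d$. Suppose $u\mathrel{\Cl J}d$; then $u$, $s$, $d$ all lie in $J_d$. Using stability (so $\Cl J=\Cl D$), left stability applied to $d=us\mathrel{\Cl J}s$ gives $d\mathrel{\Cl L}s$, and right stability applied to $d=us\mathrel{\Cl J}u$ gives $d\mathrel{\Cl R}u$; thus $d=us\in R_u\cap L_s$. By Fact~\ref{f:M-C}, $L_u\cap R_s$ then contains an idempotent~$e$, and $e\in L_u\cap R_s\subseteq J_d$, so $J_d$ is a regular $\Cl D$-class and $d$ is regular, contradicting the hypothesis. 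Therefore $u\not\mathrel{\Cl J}d$, and with $d\le u$ this yields $u>d$. The case $d=su$ follows by the left--right dual. I expect this Miller--Clifford step to be the main obstacle, since it is the one place where stability and non-regularity must be combined precisely to manufacture the forbidden idempotent.

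Finally, for $J_d\cap U=\emptyset$ I would bootstrap the first part. Suppose $w\in J_d\cap U$. Then $d\le w$, so $d=awb$ for some $a,b\in S^1$, and setting $p=aw$ the sandwich $d\le p\le w$ together with $d\mathrel{\Cl J}w$ forces $p\mathrel{\Cl J}d$; in particular $p$ is non-regular, and whenever $p\notin U$ it is again $\Cl J$-maximal in $S\setminus U$ (anything strictly above $p$ is strictly above~$d$, hence lies in~$U$). Now split into cases, discarding the degenerate $a=1$ or $b=1$, which each put $d$ or $p$ in~$U$ directly. If $p\in U$, apply the first part to the factorization $d=p\cdot b$ to get $p>d$, contradicting $p\mathrel{\Cl J}d$; if $p\notin U$, apply the first part to $p=a\cdot w$ with $w\in U$ to get $w>p$, contradicting $w\mathrel{\Cl J}p$. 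Either way we reach a contradiction, so $J_d\cap U=\emptyset$.
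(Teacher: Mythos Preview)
Your argument is correct and follows essentially the paper's approach: the deductions of $s\in J_d$ and $u>d$ are identical (the paper also invokes Fact~\ref{f:M-C} to produce an idempotent in~$D$ from a product staying in~$D$), and for $J_d\cap U=\emptyset$ both proofs bootstrap the first assertion, the paper via $U\cap R_d=\emptyset$ and then $U\cap L_{d'}=\emptyset$ for each $d'\in R_d$, while you go through $d=awb$ and the intermediate $p=aw$. One small expository slip: the case $b=1$ does not by itself ``put $d$ in~$U$ directly''; rather $d=p$, and your subsequent dichotomy on $p\in U$ versus $p\notin U$ still disposes of it (if $p\in U$ then $d=p\in U$, contradiction; if $p\notin U$ then necessarily $a\ne1$ and the dual first part applies to $p=a\cdot w$).
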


\begin{proof}
  We prove the lemma under the assumption that $d=us$, the argument
  for the case where $d=su$ being dual. Let $D$ be the \Cl J-class
  of~$d$, which is also a \Cl D-class since $S$ is assumed to be
  stable.

  Note that $d\leq u$ and $d\leq s$. If $d<s$ then $s\in U$, by the
  maximality of $d$, so that $d=us\in U$ as $U$ is a subsemigroup of
  $S$, a contradiction. Thus, we have $s\in D$. On the other hand, if
  $u$ is also in $D$, then there are two elements in $D$ whose product
  remains in $D$. By Fact~\ref{f:M-C} this is
  only possible if $D$ contains an idempotent, which in turn implies
  that $D$ consists of regular elements of $S$; this contradicts the
  assumption that $d$ is not regular. Hence, $u\notin D$, that is,
  $d<u$.

  To complete the proof of the lemma, suppose first that there is some
  $u\in U\cap R_d$. Then there is some $s\in S$ such that $d=us$. By
  the first part of the proof, we know that $u>d$, which contradicts
  $u\in R_d$. This shows that $U\cap R_d=\emptyset$. Dually, for each
  $d'\in R_d$, we get $U\cap L_{d'}=\emptyset$. Hence, $U$ and $D$ are
  disjoint.
\end{proof}

The following proposition is the announced application of
Lemma~\ref{l:J-max-non-regular}.

\begin{prop}
  \label{p:J-max-regular}
  Let $S$ be a stable semigroup and $U$ be a proper subsemigroup of
  $S$ such that the inclusion mapping $U\embedsin S$ is an epimorphism
  of semigroups. Then every $\Cl J$-maximal element of $S\setminus U$
  is regular.
\end{prop}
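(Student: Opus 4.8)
The plan is to argue by contradiction: suppose $d$ is a $\Cl J$-maximal element of $S\setminus U$ that is not regular. Since $\Dom(U,S)=S$ and $d\notin U$, Isbell's Zigzag Theorem (Theorem~\ref{t:zigzag}) provides a zigzag for $d$ over $U$, which we may take of minimum length $m$. By observation~\ref{item:O1}, none of the factors $x_i,y_i$ lies in $U$. The idea is to use the factorization $d=x_1u_1$ with $u_1\in U$ and $x_1\in S\setminus U$, together with Lemma~\ref{l:J-max-non-regular} applied in the form $d=su$ (reading $s=x_1$, $u=u_1\in U$), to conclude that $u_1>d$ and $x_1\in J_d$. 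Likewise, from $d=u_my_{m-1}$ with $u_m\in U$ and $y_{m-1}\in S\setminus U$, Lemma~\ref{l:J-max-non-regular} in the form $d=us$ gives $u_m>d$ and $y_{m-1}\in J_d$. The first nontrivial point is to extract a genuine contradiction from the interior equations of the zigzag.

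The heart of the argument is to track the $\Cl J$-classes along the spine. From $d=x_i\cdot u_iy_{i-1}=x_iv_i\cdot y_i$ (using the form~\eqref{eq:Isbell-zigzag-alt}, with $x_m=y_0=1$ in $S^1$), and since $d$ is $\Cl J$-maximal in $S\setminus U$ while each $x_i,y_i\notin U$ (for $1\le i\le m-1$), one shows by induction that all the ``outer'' factors $x_i,y_i$ lie in $J_d=D$ (the equality $\Cl J=\Cl D$ holding by stability): indeed $d\le x_i$ and $d\le y_i$ always, and if say $d<x_i$ then maximality forces $x_i\in U$, a contradiction. So every $x_i,y_i\in D$. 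Now consider $d=x_iv_iy_i$ with $x_i,y_i\in D$ and $v_i\in U$. Since $d\mathrel{\Cl R}x_i$ would have to be checked, one instead uses the middle equation $x_{i-1}v_{i-1}=x_iu_i$: here $x_{i-1}v_{i-1}\in D$ (as $d=x_{i-1}v_{i-1}y_{i-1}\le x_{i-1}v_{i-1}\le x_{i-1}\in D$ forces equality of $\Cl J$-classes), so $x_iu_i\in D$ with $x_i\in D$, whence $u_i\in D$ or one invokes Fact~\ref{f:M-C}. The point is that having two elements of $D$ whose product stays in $D$ forces an idempotent in $D$ by Fact~\ref{f:M-C}, hence $D$ consists of regular elements, contradicting non-regularity of $d$. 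The only case where no interior product of this kind is forced is $m=1$, where the zigzag degenerates to $d=x_1u_1$, $u_1=v_1y_1$, $x_1v_1=d$; but then $d=x_1v_1$ with $x_1\in D$ and $d\le v_1$, and $v_1\notin U$ is impossible, so $v_1\in U$, while $d=x_1v_1y_1$ shows $d\mathrel{\Cl R}x_1$ by maximality of $d$ as before — again yielding a product of two elements of $D$ inside $D$.

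The main obstacle I expect is the bookkeeping needed to guarantee that somewhere along the spine we genuinely produce a product $ab=c$ with $a,b,c$ all $\Cl D$-equivalent to $d$, so that Fact~\ref{f:M-C} applies and forces an idempotent (hence regularity) in $D$. One has to handle carefully the boundary terms $x_m=y_0=1$ and the possibility that some $v_i$ or $u_i$ might a priori be large (i.e. $u_i>d$); Lemma~\ref{l:J-max-non-regular} is exactly what rules out the relevant factorizations through $U$ from producing anything in $\Cl J$-classes strictly above $d$ in an inconsistent way, and observation~\ref{item:O2} (or Lemma~\ref{l:O3,4}\ref{item:O3}) can be used to normalize the zigzag so that the spine factors sit in $EUE$, which makes the $\Cl J$-class tracking cleaner. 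Once the contradiction is located, the proof closes: no $\Cl J$-maximal element of $S\setminus U$ can be non-regular, so every such element is regular.
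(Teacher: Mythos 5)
Your overall strategy---pass to a minimum\nobreakdash-length zigzag, use \ref{item:O1} together with the $\Cl J$-maximality of~$d$ to place all the outer factors $x_i,y_i$ in $D=J_d$, and then exhibit a product of two elements of~$D$ lying in~$D$ so that stability and Fact~\ref{f:M-C} force an idempotent (hence regularity) into~$D$---is sound, and the first two steps are carried out correctly. The gap sits exactly where you flag ``the main obstacle'': the product you actually point to, $x_i\cdot u_i$, is \emph{not} a product of two elements of~$D$. Indeed $u_i\in U$, and the final assertion of Lemma~\ref{l:J-max-non-regular} (which you invoke, and whose proof is precisely the Fact~\ref{f:M-C} argument you want to run) gives $J_d\cap U=\emptyset$ under the non-regularity hypothesis; so the alternative ``whence $u_i\in D$'' never occurs, and Fact~\ref{f:M-C} cannot be applied to the pair $(x_i,u_i)$, since $x_iu_i\mathrel{\Cl L}u_i$ would force $u_i\in D$. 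As written, the contradiction is therefore never actually produced.

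The repair is short and stays entirely within your framework. Since $d\notin U$ forces $m\ge2$, fix $i$ with $1\le i\le m-1$ and use the factorization $d=(x_iv_i)\cdot y_i$ from~\eqref{eq:Isbell-zigzag-alt}. You have already shown $x_i,y_i\in D$, and the sandwich $d\le x_iv_i\le x_i$ with both ends in~$D$ gives $x_iv_i\in D$ as well. Now $x_iv_i$, $y_i$ and their product $d$ all lie in~$D$, so stability yields $d\mathrel{\Cl R}x_iv_i$ and $d\mathrel{\Cl L}y_i$, and Fact~\ref{f:M-C} places an idempotent in $L_{x_iv_i}\cap R_{y_i}\subseteq D$, making $D$ regular and contradicting the choice of~$d$. (With this finish you do not even need Lemma~\ref{l:J-max-non-regular}.) For comparison, the paper argues in the opposite direction: Lemma~\ref{l:J-max-non-regular} gives $u_1>d$ and $v_iy_i>d$, hence by maximality each $y_i$ lies in~$U$, and the proof ends with $d=u_my_{m-1}\in U$; that route works for an arbitrary zigzag and never needs minimality of~$m$ or~\ref{item:O1}.
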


\begin{proof}
  Let $d$ be a $\Cl J$-maximal element of $S\setminus U$ and suppose
  that $d$ is not regular. Let $D$ be the $\Cl D$-class of~$d$ in~$S$.
  By Theorem~\ref{t:zigzag}, there is a zigzag
  \eqref{eq:Isbell-zigzag} of~$d$ over~$U$.

  From the equality $d=x_1u_1$, we deduce by
  Lemma~\ref{l:J-max-non-regular} that $x_1\in D$ and $u_1>d$. Also
  since $u_1=v_1y_1$ and $y_1\geq u_1>d$, by the $\Cl J$-maximality of
  $d$ it follows that $y_1\in U$ and so $v_2y_2=u_2y_1\in U$. Assume
  inductively that $x_{i-1}\in D$ and $y_{i-1}\in U$. We prove that
  $x_i\in D$ and $y_i\in U$ for $2\le i\leq m-1$. Since $y_{i-1}\in
  U$, we see that $v_iy_i=u_iy_{i-1}\in U$. Applying
  Lemma~\ref{l:J-max-non-regular} to the factorization
  $d=x_i(v_iy_i)$, we conclude that $x_i\in D$ and $v_iy_i>d$. As
  $y_i\geq v_iy_i>d$, again by the maximality of $d$ it follows that
  $y_i\in U$. In particular, by taking $i=m-1$ we get $x_{m-1}\in D$
  and $y_{m-1}\in U$, so that $d=u_my_{m-1}\in U$, a contradiction
  with the assumption that $d\in S\setminus U$. Hence, $d$ must be
  regular.
\end{proof}

Under an extra assumption on the idempotents, the next result shows
that a $\Cl J$-maximal $U$-dominated element $d$ of $S\setminus U$
admits a zigzag with all factors in the $\Cl J$-class of~$d$. This
will be instrumental in the remainder of the paper.

\begin{prop}
  \label{p:plenty-of-idempotents}
  Let $S$ be a stable semigroup and let $U$ be a proper subsemigroup
  of $S$ epimorphically embedded in~$S$. If $d$ is a $\Cl J$-maximal
  element of $S\setminus U$ and $U$ contains at least one idempotent
  from every $\Cl R$-class and from every $\Cl L$-class in the $\Cl
  D$-class $D$ of~$d$ in~$S$, then $d$ admits a zigzag over~$U$ all of
  whose factors belong to~$D$.
\end{prop}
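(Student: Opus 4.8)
The plan is to start from a minimum-length zigzag for $d$ over $U$, observe that its outer factors $x_i,y_i$ are forced into $D$ by the $\Cl J$-maximality of $d$, and then repair the spine by left-multiplying each non-terminal spine factor by an idempotent of $U$ drawn from the $\Cl L$-class of the adjacent $x_i$; the equalities $x_ie_i=x_i$ for such idempotents $e_i$ keep the zigzag intact.

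First, by Proposition~\ref{p:J-max-regular}, $d$ is regular, so $D$ is a regular $\Cl D$-class. Fix a zigzag~\eqref{eq:Isbell-zigzag} for $d$ over $U$ of minimum length and use the alternative form~\eqref{eq:Isbell-zigzag-alt}. From $d=x_i\cdot u_iy_{i-1}$ we get $d\le x_i$ in the $\Cl J$-order; since $x_i\notin U$ by~\ref{item:O1} and $d$ is $\Cl J$-maximal in $S\setminus U$, the strict inequality $d<x_i$ is impossible, so $x_i\mathrel{\Cl{J}}d$ and hence $x_i\in D$, because stability gives $\Cl J=\Cl D$. Dually $y_i\in D$ for all $i$. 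Consequently, from $d=x_iv_i\cdot y_i$ we obtain $d\le x_iv_i\le x_i\mathrel{\Cl{J}}d$, so $x_iv_i\in D$; likewise $x_iu_i\in D$ for all relevant $i$ (with $x_1u_1=d$), and $u_m=x_{m-1}v_{m-1}\in D$.

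Next, for $1\le i\le m-1$ the $\Cl L$-class $L_{x_i}$ lies in $D$, so by hypothesis it contains an idempotent $e_i\in U$, which then satisfies $x_ie_i=x_i$. Viewing this as a factorization $x_i=x_i\cdot e_i$ with $e_i\in U$ and $x_i\in S\setminus U$, apply the dual of~\ref{item:O2}: replace $u_i$ by $e_iu_i$ and $v_i$ by $e_iv_i$, leaving every $x_j$, every $y_j$, and $u_m$ unchanged. Since the changes indexed by distinct $i$ affect disjoint spine factors, they may all be carried out at once, and checking the equations of~\eqref{eq:Isbell-zigzag} using only $x_ie_i=x_i$ shows the result is again a zigzag for $d$ over $U$. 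To finish, one checks that the new spine factors lie in $D$: $e_iv_i\le e_i\mathrel{\Cl{J}}d$, while $x_i(e_iv_i)=x_iv_i$ forces $e_iv_i\ge x_iv_i\mathrel{\Cl{J}}d$, so $e_iv_i\in D$; the argument for $e_iu_i$ is identical with $x_iv_i$ replaced by $x_iu_i$. Together with $x_i,y_i,u_m\in D$, all factors of the new zigzag lie in~$D$.

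The step requiring care is the last one: after the repair we must land back inside $D$, not merely above $d$, and this is exactly the two-sided squeeze $x_iv_i\le e_iv_i\le e_i$ with both ends $\Cl J$-equivalent to $d$ — which is why it is worth first extracting $x_iv_i\in D$ and $x_iu_i\in D$ from the $\Cl J$-maximality of $d$ before touching the spine. Everything else is routine bookkeeping with the zigzag equations (note that only idempotents of $U$ lying in the $\Cl L$-classes of $D$ are used above, the $\Cl R$-class hypothesis entering a symmetric variant of the argument).
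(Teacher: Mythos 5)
Your proof is correct, and it takes a genuinely different route from the paper's. The paper works with an \emph{arbitrary} zigzag for $d$: it first replaces $x_i$ by $ex_i$ and $y_i$ by $y_if$ for idempotents $e\in R_d\cap U$ and $f\in L_d\cap U$, forcing these into $D$ by the squeeze $d\le ex_i\le e$ with $e\mathrel{\Cl J}d$; it then sandwiches each spine factor on \emph{both} sides, setting $u'_i=e_iu_if_{i-1}$ and $v'_i=e_iv_if_i$ with idempotents drawn from the $\Cl L$-classes of the $ex_i$ and the $\Cl R$-classes of the $y_if$, and verifies the new zigzag equations directly. You instead take a minimum-length zigzag and use observation~\ref{item:O1} together with the $\Cl J$-maximality of $d$ to conclude that the $x_i$ and $y_i$ already lie in $D$, which reduces the repair to a one-sided modification of the spine ($u_i\mapsto e_iu_i$, $v_i\mapsto e_iv_i$) justified by $x_ie_i=x_i$; your squeeze $x_iv_i\le e_iv_i\le e_i$, with both ends $\Cl J$-equivalent to $d$, then places the new spine inside $D$, and the verification of the modified equations is exactly the dual of~\ref{item:O2}. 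A byproduct of your argument is that only the idempotents of $U$ in the $\Cl L$-classes of $D$ are actually needed, so you prove a marginally stronger statement than the one asserted; the price is the extra reliance on minimality of the zigzag and on~\ref{item:O1}, which the paper's proof avoids. Both arguments are sound, and for the applications in the paper (where both halves of the idempotent hypothesis are available) they are interchangeable.
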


\begin{proof}
  By Proposition~\ref{p:J-max-regular}, we know that $d$ is regular
  in~$S$. Take a zigzag~\eqref{eq:Isbell-zigzag} for $d$ over~$U$ and
  choose idempotents $e\in R_d\cap U$ and $f\in L_d\cap U$. Since
  \begin{displaymath}
    d=edf=ex_iu_iy_{i-1}f
  \end{displaymath}
  we see that $x'_i=ex_i$ and $y'_i=y_if$ ($i=1,\ldots,m-1$) are
  elements of~$D$. Hence, we may also choose idempotents $e_i\in
  L_{x'_i}\cap U$ and $f_i\in R_{y'_i}\cap U$ ($i=1,\ldots,m-1$). We
  further let $e_m=e$ and $f_0=f$. Finally, we consider the following
  elements of~$U$:
  \begin{align*}
    u'_i&=e_iu_if_{i-1}, \quad (i=1,\ldots,m)\\
    v'_i&=e_iv_if_i. \quad (i=1,\ldots,m-1).
  \end{align*}
  From the factorizations
  \begin{displaymath}
    d=edf=ex_iu_iy_{y-1}f=x'_ie_iu_if_{i-1}y'_{i-1}= x'_i u'_iy'_{i-1}
  \end{displaymath}
  we conclude that $u'_i$ belongs to~$D$ ($i=1,\ldots,m$). Similarly,
  $v'_i\in D$ for $i=1,\ldots,m-1$. This leads to a new zigzag
  for~$d$ as follows:
  \begin{align*}
    d &= x'_{1}u'_{1}
    &u'_{1} &=v'_{1}y'_1\\
    x'_{i-1}v'_{i-1} &=x'_{i}u'_{i},
    &u'_{i}y'_{i-1} &=v_{i}'y_{i}' \quad(i=2,\ldots,m-1)\\
    x'_{m-1}v'_{m-1} &=u'_{m},
    &u'_{m}y'_{m-1} &=d.
  \end{align*}
  Fo example, we have the following calculations:
  \begin{align*}
    x'_{i-1}v'_{i-1}
    &=x'_{i-1}e_{i-1}v_{i-1}f_{i-1}
      =x'_{i-1}v_{i-1}f_{i-1}
      =ex_{i-1}v_{i-1}f_{i-1}\\
    &=ex_iu_if_{i-1}
      =x'_iu_if_{i-1}
      =x'_ie_iu_if_{i-1}
      =x'_{i}u'_{i},\\
    u'_iy'_{i-1}
    &=e_iu_if_{i-1}y_{i-1}f
    =e_iu_iy_{i-1}f
    =e_iv_iy_if
    =e_iv_if_iy_if
    =v'_iy'_i.
  \end{align*}
  This completes the proof of the proposition.
\end{proof}

The preceding proposition affords the following useful consequence.

\begin{cor}
  \label{c:spuriousness}
  Let $S$ be a stable semigroup and $U$ a finite proper subsemigroup
  which is epimorphically embedded in~$S$. Suppose that $d$ is a $\Cl
  J$-maximal element of~$S\setminus U$ and $D$ is its $\Cl D$-class
  in~$S$. If $U\cap D$ is closed under multiplication, then there is
  at least one $\Cl R$-class or one $\Cl L$-class within the $\Cl
  D$-class of~$d$ that contains no elements of~$U$.
\end{cor}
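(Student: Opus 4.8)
The plan is to argue by contradiction. Suppose every $\Cl R$-class and every $\Cl L$-class of $D$ meets $U$, and put $N=U\cap D$; this is a finite subsemigroup of $S$, nonempty (it meets $R_d$) and, by hypothesis, closed under multiplication, all of whose elements lie in the single $\Cl D$-class $D$ of the stable semigroup $S$. The first claim is that $N$ is completely simple. Since $N$ is closed, $a^{2}\in D$ for every $a\in N$, so $a^{2}\mathrel{\Cl J}a$ and hence, by stability, $a^{2}\mathrel{\Cl H}a$; thus every $a\in N$ lies in a subgroup of $N$, i.e.\ $N$ is completely regular. Likewise, for $e\in E(N)$ and $a\in N$ we have $eae\in D$, so $eae\mathrel{\Cl J}e$ and hence $eae\mathrel{\Cl H}e$ by stability; therefore $eNe$ is contained in the group $H_{e}$, which forces $e$ to be a primitive idempotent of $N$. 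A finite completely regular semigroup all of whose idempotents are primitive equals its minimal ideal $K(N)$ — given $f\in E(N)$ and $e\in E(K(N))$ one has $(fef)^{\omega}\le f$, so $f=(fef)^{\omega}\in K(N)$, whence $E(N)\subseteq K(N)$ and, since each element of $N$ is $\Cl H$-related within $N$ to its idempotent power, $N=K(N)$ — so $N$ is completely simple.

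Being completely simple, $N$ has an idempotent in each of its $\Cl R$- and $\Cl L$-classes; as $\Cl R_{N}$ and $\Cl L_{N}$ refine $\Cl R$ and $\Cl L$, it follows that the idempotents of $U$ meet every $\Cl R$-class and every $\Cl L$-class of $D$. By Proposition~\ref{p:plenty-of-idempotents}, $d$ then admits a zigzag over $U$ all of whose factors lie in $D$; its spine factors lie in $U\cap D=N$, so this is in fact a zigzag for $d$ over $N$, witnessing $d\in\Dom(N,S)\setminus N$. Now fix idempotents $e\in R_{d}\cap E(N)$ and $f\in L_{d}\cap E(N)$; applying Fact~\ref{f:M-C} to the idempotent of $N$ lying in $L_{e}\cap R_{f}$ gives $ef\in R_{e}\cap L_{f}=H_{d}$, so $H_{d}$ contains an idempotent and is therefore a group, with identity $e_{0}:=(ef)^{\omega}\in N$. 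Consequently $K:=N\cap H_{d}$ is a subgroup of the group $H_{d}$, and $d=e_{0}de_{0}\in H_{d}\setminus K$.

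It remains to contradict $d\in\Dom(N,S)$, and the idea is to localize at $e_{0}$. Using the idempotents of $N$ present in every $\Cl R$- and $\Cl L$-class of $D$ one reworks the zigzag for $d$ over $N$ (whose factors are already in $D$), exactly as in the proof of Proposition~\ref{p:plenty-of-idempotents}, into a zigzag for $d$ over $K$ all of whose factors lie in $H_{d}$. Once this is achieved the contradiction is immediate: since $H_{d}$ is a group and $K$ a subgroup, one solves the zigzag equations successively, $y_{1}=v_{1}^{-1}u_{1}\in K$ and $y_{i}=v_{i}^{-1}(v_{i}y_{i})\in K$, so that $d=u_{m}y_{m-1}\in K\subseteq U$, contradicting $d\notin U$. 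The delicate point, and the main obstacle, is precisely this localization of the zigzag to the group $\Cl H$-class $H_{d}$ — equivalently, the assertion that $d\in\Dom(N,S)$ together with $d\in H_{d}$ and $e_{0}\in E(N)$ forces $d\in\Dom(K,H_{d})$; everything else is routine structure theory of stable semigroups.
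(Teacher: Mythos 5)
Your argument tracks the paper's up to the point where you have (i)~idempotents of $N=U\cap D$ in every $\Cl R$- and $\Cl L$-class of~$D$ and (ii)~a zigzag for $d$ over $N$ with all factors in~$D$; your detour through complete simplicity of~$N$ is a legitimate, if longer, way to reach~(i) (the paper gets an idempotent of $U$ in \emph{every} $\Cl H$-class of~$D$ directly from Fact~\ref{f:M-C} applied to $uv$ and $vu$ with $u\in U\cap R_x$, $v\in U\cap L_x$, and then notes that $D$ is itself a subsemigroup). The gap is your final step. The claim that the zigzag over~$N$ can be reworked, ``exactly as in the proof of Proposition~\ref{p:plenty-of-idempotents}'', into a zigzag for $d$ over $K=N\cap H_d$ with all factors in~$H_d$ is not justified and does not follow from that proposition's technique. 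Conjugating by $e\in R_d\cap E(N)$ and $f\in L_d\cap E(N)$ puts the $x_i$ into~$R_d$ and the $y_i$ into~$L_d$, and sandwiching the spine between local idempotents $e_i\in L_{x_i}\cap U$ and $f_i\in R_{y_i}\cap U$ keeps the spine in~$D$ --- but the resulting $u'_i,v'_i$ land in the $\Cl H$-classes $R_{e_i}\cap L_{f_{i-1}}$, which vary with~$i$ and are not~$H_d$. No further conjugation of the same kind collapses everything into a single group $\Cl H$-class while preserving the zigzag equations.

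The statement you actually need --- that $d\in\Dom(N,D)$ forces $d\in\Dom(N\cap H_d,H_d)$ --- amounts to the assertion that a finite union-of-groups subsemigroup meeting every row and column of~$D$ cannot be properly epimorphically embedded in~$D$, i.e., a special case of the Hall--Jones theorem that finite regular semigroups are saturated. That is precisely the external result the paper invokes at this point: $U\cap D$ is a finite regular semigroup (one subgroup per $\Cl H$-class of~$D$), so $\Dom(U\cap D,D)=D$ contradicts its saturation. You flag the localization yourself as ``the main obstacle''; as written it is the missing proof, not a routine reduction, and your group-theoretic back-substitution $y_i=v_i^{-1}(v_iy_i)$ only becomes available after that step. (Two smaller, repairable points: the existence of an idempotent of~$N$ in $L_e\cap R_f$ needs the remark that $N$ is $\Cl D$-simple, so $R^N_e\cap L^N_f\neq\emptyset$ and its elements lie in $L_e\cap R_f$; and $ef\in R_e\cap L_f$ alone does not make $H_d$ a group --- you need $(ef)^2\in D$ plus stability, or one more application of Fact~\ref{f:M-C}.)
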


\begin{proof}
  By Proposition~\ref{p:J-max-regular}, we know that $d$ is regular
  in~$S$. Suppose to the contrary that $U$ contains some element of
  each $\Cl R$-class and each $\Cl L$-class contained in $D$. Since
  $U\cap D$ is a subsemigroup of~$S$, we claim that then $U\cap D$
  must contain an idempotent in every $\Cl H$-class of~$D$. Indeed, if
  $x$ is an arbitrary element of~$D$, we may choose elements $u\in
  U\cap R_x$ and $v\in U\cap L_x$. Then, $uv$ and $vu$ are both
  elements of $U\cap D$ and so, by Fact~\ref{f:M-C}, $H_x$ is a group
  to which $uv$ belongs; since $U$~is finite, the idempotent of~$H_x$
  is $(uv)^\omega$, thereby proving the claim. It follows that $D$
  itself must be a subsemigroup of~$S$. In particular, each $\Cl
  R$-class and each $\Cl L$-class of~$S$ within $D$ contains some
  idempotent of~$U\cap D$. By
  Proposition~\ref{p:plenty-of-idempotents}, we deduce that every
  $x\in D\setminus U$ has a zigzag in~$D$ over~$U\cap D$, that is,
  $\Dom(U\cap D,D)=D$. Note that $U\cap D$ is a regular semigroup: it
  is a union of pairwise disjoint subgroups, namely a subgroup within
  each $\Cl H$-class of~$D$. By \cite[Theorem~1]{Hall&Jones:1983}, it
  follows that, being also finite, $U\cap D$ is saturated. Hence, $d$
  belongs to $U$, which contradicts the hypothesis.
\end{proof}

The following result examines the nature of non-saturated members of
minimum order of a given pseudovariety.

\begin{prop}
  \label{p:min-non-saturated-in-pv}
  Suppose $U$ is a non-saturated member of a pseudovariety \pv V which
  is of minimum order for this property. Then the following properties
  are satisfied, where $E=E(U)$:
  \begin{enumerate}[label=(\roman*)]
  \item\label{item:min-non-saturated-in-pv-1} the equality $U=EUE$ holds;
  \item\label{item:min-non-saturated-in-pv-2} if $U\embedsin S$ is a
    proper epimorphic embedding then, for every $d\in S\setminus U$,
    every nonzero element of~$U$ is a factor of~$d$;
  \item\label{item:min-non-saturated-in-pv-3} there is a proper
    epimorphic embedding $\varphi:U\embedsin S$ for which $S\setminus
    U$ is contained in a single $\Cl J$-class $J$ which contains
    nonzero elements of~$U$;
  \item\label{item:min-non-saturated-in-pv-4} for the epimorphism
    $\varphi$ of~\ref{item:min-non-saturated-in-pv-3}, all nonzero
    elements of~$U$ lie in the same $\Cl D$-class of~$S$.
  \end{enumerate}
\end{prop}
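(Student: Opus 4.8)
The plan is to exploit the minimality of $U$ to force strong structural constraints, proceeding through the four items essentially in the stated order. For item~\ref{item:min-non-saturated-in-pv-1}, I would first observe that $EUE$ is a subsemigroup of $U$ lying in the pseudovariety $\pv V$ (closure under subsemigroups), and use Lemma~\ref{l:O3,4}\ref{item:O4}: if $U$ is not saturated, realize $\Dom(U,S)=S$ for some proper $S\supsetneq U$, take a minimum-length zigzag, and pass to $ESE$, where $\Dom(EUE,ESE)=ESE$. The point is that $U\subseteq ESE$ would be needed to conclude $EUE$ is non-saturated too; since $u_1\in EU^r$ and $u_m\in U^rE$ for all $r$ (Lemma~\ref{l:O3,4}\ref{item:O3}), one shows every element of $U$ lies in $ESE$ as a factor of a zigzag element, hence $EUE$ is properly epimorphically embedded in $ESE$ — but $|EUE|\le|U|$, and by minimality of order we must have $EUE=U$ unless $EUE$ is saturated, which it is not. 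So $U=EUE$.

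For item~\ref{item:min-non-saturated-in-pv-2}, fix a proper epimorphic embedding $U\embedsin S$ and $d\in S\setminus U$. Using item~\ref{item:min-non-saturated-in-pv-1} together with Lemma~\ref{l:O3,4}\ref{item:O3} applied to a minimum-length zigzag for some $\Cl J$-maximal element, I would argue that the spine factors generate a subsemigroup of $U$ through which all nonzero elements of $U$ must appear as factors of $d$: if some nonzero $u_0\in U$ were not a factor of any $d'\in S\setminus U$, then $U\setminus\{$things below $u_0\}$ — more carefully, the ideal structure — would let us split off a proper subsemigroup of $U$ in $\pv V$ that is still non-saturated (via the induced zigzags), contradicting minimality of $|U|$. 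The cleanest route is: the set of elements of $U$ that are factors of some element of $S\setminus U$ forms (together with possibly $0$) a subsemigroup; if it is proper in $U\setminus\{0\}$, one derives a smaller non-saturated member of $\pv V$.

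For items~\ref{item:min-non-saturated-in-pv-3} and~\ref{item:min-non-saturated-in-pv-4}, start from any proper epimorphic embedding and a $\Cl J$-maximal $d\in S\setminus U$ with $\Cl D$-class $D$; by Proposition~\ref{p:J-max-regular}, $d$ is regular. Restrict $S$ to the subsemigroup generated by $U$ together with $d$ and a minimum-length zigzag for $d$; by item~\ref{item:min-non-saturated-in-pv-2} every nonzero element of $U$ is a factor of $d$, so all nonzero elements of $U$ satisfy $u\ge d$, i.e.\ $J_d$ is $\Cl J$-below every nonzero $\Cl J$-class of $U$; combined with $\Cl J$-maximality of $d$ in $S\setminus U$ this squeezes $S\setminus U$ into the single $\Cl J$-class $J=J_d$, which contains nonzero elements of $U$ precisely because $d\le u$ for all such $u$ and (after possibly discarding lower parts of $S$) nothing strictly below survives outside $U$. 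Finally, since $S$ is now built so that $S\setminus U\subseteq J$ and all nonzero $u\in U$ are $\Cl J$-equivalent to elements of $J$, stability (finite $U$, and $S$ can be taken finite or at least stable by the constructions available) gives $\Cl J=\Cl D$ on this class, so all nonzero elements of $U$ lie in the single $\Cl D$-class of $d$.

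The main obstacle I anticipate is item~\ref{item:min-non-saturated-in-pv-2}: turning "$u_0$ is not a factor of $d$" into an honest smaller non-saturated member of $\pv V$ requires carefully identifying which subsemigroup of $U$ to cut down to and checking that the zigzag for $d$ survives the restriction — in particular that the spine can be taken inside that subsemigroup, which is where Lemma~\ref{l:O3,4} and observation~\ref{item:O2} do the heavy lifting. The passage to item~\ref{item:min-non-saturated-in-pv-3}, choosing the right oversemigroup $S$ (likely the subsemigroup of the original $S$ generated by $U$ and one zigzag, then possibly quotienting by an ideal to kill everything $\Cl J$-below $J_d$ outside $U$), also needs care to ensure the embedding remains epimorphic and proper.
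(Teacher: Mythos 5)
Your overall strategy --- exploiting the minimality of $|U|$ repeatedly, via Lemma~\ref{l:O3,4} for item~(i) and via ideal/quotient arguments for (ii)--(iii) --- matches the paper's, but three steps have genuine gaps. For~(i), your plan hinges on showing $U\subseteq ESE$ so that $EUE\subsetneq ESE$ whenever $EUE\ne U$; but ``every element of $U$ lies in $ESE$ as a factor of a zigzag element'' is not an argument (being a factor of an element of $ESE$ does not place an element in $ESE$), and $U\subseteq ESE$ is essentially the conclusion $U=EUE$ you are trying to prove. The paper runs the implication the other way: if $U\ne EUE$, minimality forces $EUE$ to be saturated, hence $EUE=ESE$ by Lemma~\ref{l:O3,4}\ref{item:O4}, and then Lemma~\ref{l:O3,4}\ref{item:O3} yields $S\subseteq UESEU\subseteq U^3\subseteq U$, a contradiction. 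For~(ii), the object with good structure is not the set of elements of $U$ that are factors of some element of $S\setminus U$ (which need not be a subsemigroup, and in any case addresses the wrong quantifier --- the claim concerns every nonzero element of $U$ being a factor of a \emph{fixed} $d$); it is the complement $I_d\cap U=\{u\in U: u\not\ge d\}$, which is an ideal of~$U$. One then passes to the Rees quotients $U/(I_d\cap U)\embedsin S/I_d$, checks that the embedding is still a proper epimorphic one, and invokes minimality to force $I_d\cap U$ to be the singleton $\{0\}$. You correctly flagged this step as the main obstacle, but the subsemigroup route you sketch would not close it.

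The most serious gap is in~(iv): you assert that all nonzero $u\in U$ are $\Cl J$-equivalent to elements of $J$ as though it followed from the construction in~(iii), but (iii) only places \emph{some} nonzero elements of $U$ in~$J$; items (ii)--(iii) give $s\le u$ for every $s\in S\setminus U$ and every nonzero $u$, i.e., that $u$ lies $\Cl J$-\emph{above} $J$, not in it. Establishing (iv) requires a further appeal to minimality: every element of $S\setminus U$ admits a zigzag with all factors in $J$ (by the argument of Lemma~\ref{l:O3,4}\ref{item:O3}), so the subsemigroup generated by $U\cap J$ is epimorphically embedded in the one generated by $J$, and minimality forces it to be all of~$U$; only then does every nonzero element of $U$ land in~$J$. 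A smaller slip occurs in~(iii): $J$ contains nonzero elements of $U$ not ``because $d\le u$'' (that inequality goes the wrong way), but because the zigzag provides an element $y_1\in S\setminus U$ with $u_1\le y_1$, which combined with~(ii) gives $y_1\mathrel{\Cl J}u_1$.
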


\begin{proof}
  \ref{item:min-non-saturated-in-pv-1} Since $U$ is non-saturated,
  there is a proper epimorphic embedding $U\embedsin S$. By
  Lemma~\ref{l:O3,4}\ref{item:O4}, we see that the inclusion mapping
  $EUE\embedsin ESE$ is also an epimorphism. If $U\ne EUE$ then the
  minimality assumption yields the equality $EUE=ESE$, from which, by
  Lemma~\ref{l:O3,4}\ref{item:O3}, we conclude that $S\subseteq
  UESEU\subseteq U^3\subseteq U$, which contradicts the assumption
  that $S\ne U$.

  \ref{item:min-non-saturated-in-pv-2} Suppose now that $U\embedsin S$
  is an arbitrary proper epimorphic embedding. Given $s\in S$, let
  \begin{displaymath}
    U_s=\{u\in U: u\ge s\}
    \quad\text{and}\quad
    I_s=\{t\in S: t\not\ge s\}.
  \end{displaymath}
  Let $d$ be an arbitrary element $S\setminus U$ and suppose that
  $U_d\ne U$. Then, $I_d\cap U$ is a nonempty ideal of~$U$ which is a
  singleton if and only if $U$ has a zero and $U=U_d\uplus\{0\}$.
  Moreover, for every $s\in S\setminus (U\cup I_d)$, a zigzag for $s$
  over $U$ yields a zigzag for $s$ over the Rees quotient $U/(I_d\cap
  U)$. Hence, the proper embedding $U/(I_d\cap U)\embedsin S/I_d$ is
  an epimorphism. By the minimality assumption on~$U$, we deduce that
  $I_d\cap U$ is a singleton. Hence, either $U_d=U$, or $U$ has a zero
  and $U_d=U\setminus\{0\}$.

  \ref{item:min-non-saturated-in-pv-3} Let $d\in S\setminus U$ and let
  (\ref{eq:Isbell-zigzag}) be a zigzag for $d$ over~$U$ of minimum
  length. By Observation~\ref{item:O1}, all factors $x_i,y_i$ belong
  to $S\setminus U$.
  By~\ref{item:min-non-saturated-in-pv-2}, every nonzero element
  of~$U$ is a factor of $y_1$ which itself is a factor $u_1$. In
  particular, there are elements of $S\setminus U$ which lie $\Cl
  J$-above nonzero elements of $U$. So, if we let
  \begin{displaymath}
    I=\{s\in S: \forall u\in U\ (u\ne0 \implies s\not\ge u)\}
  \end{displaymath}
  then $I$~is an ideal of~$S$ such that the composite mapping $U\to
  S\to S/I$ is still a proper epimorphic embedding and thus we may
  assume that $I\setminus U$ is empty. Then, all elements
  of~$S\setminus U$ are both $\Cl J$-below all nonzero elements of~$U$
  and $\Cl J$-above some nonzero element of~$U$ and, therefore, they
  are all $\Cl J$-equivalent.

  \ref{item:min-non-saturated-in-pv-4} By the argument given for the
  proof of Lemma~\ref{l:O3,4}\ref{item:O3}, we see that every element
  $d$ of $S\setminus U$ has a zigzag (\ref{eq:Isbell-zigzag}) over~$U$
  which uses only factors from~$J$. Hence, the embedding of the
  subsemigroup of~$U$ generated by $U\cap J$ in the subsemigroup
  of~$S$ generated by $J$ is an epimorphism. By the minimality of~$U$,
  it follows that all nonzero elements of $U$ belong to~$J$.
\end{proof}

We end this section with a result which is essentially obtained with
minor modifications from the proof of
\cite[Theorem~9]{Hall&Jones:1983}. Since our result does not follow
from \cite{Hall&Jones:1983}, we spell out the proof for the sake of
completeness. We start with the following preparatory lemma.

\begin{lemma}
  \label{l:unique-minimum-D-class}
  Let $U$ be a proper subsemigroup of a semigroup $S$ such that the
  embedding $U\embedsin S$ is an epimorphism and suppose that there
  exists a maximal $\mathcal{J}$-class $J$ of $S$ containing elements
  of $S\setminus U$. Then there exists an ideal $I$ of $S$ such that
  $U/(U\cap I)$ is a proper subsemigroup of $S/I$, the inclusion
  embedding $U/(U\cap I)\embedsin S/I$ is an epimorphism, $I\cap
  J=\emptyset$, and $(S/I)\setminus \bigl(U/(U\cap I)\bigr)\subseteq
  J\cup\{0\}$.
\end{lemma}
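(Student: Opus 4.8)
The plan is to take for $I$ the two-sided ideal of $S$ generated by the set $(S\setminus U)\setminus J$ of non-$U$ elements lying outside $J$, that is, $I=S^1\bigl((S\setminus U)\setminus J\bigr)S^1$. If this generating set is empty then $S\setminus U\subseteq J$ and the conclusion is immediate with $I=\emptyset$, so assume it is nonempty, whence $I$ is a genuine ideal with $(S\setminus U)\setminus J\subseteq I$ by construction.

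The one real point is to check that $I\cap J=\emptyset$, and this is where the maximality of $J$ is used. An arbitrary element of $I$ has the form $xsy$ with $s\in(S\setminus U)\setminus J$ and $x,y\in S^1$, so $J_{xsy}\le J_s$; were $xsy$ in $J$ we would get $J\le J_s$, and since $s\notin J$ this forces $J_s>J$, contradicting the maximality hypothesis on $J$ because $J_s$ meets $S\setminus U$. Hence $I\cap J=\emptyset$. Two consequences follow. First, $U/(U\cap I)$ is a \emph{proper} subsemigroup of $S/I$: by hypothesis there is some $d\in J\setminus U$, and as $d\notin I$ its class $\{d\}$ is a nonzero element of $S/I$ not lying in the image of $U$. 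Second, the elements of $S/I$ outside $U/(U\cap I)$ are the zero together with the singletons $\{s\}$ for $s\in S\setminus(U\cup I)$, and since $(S\setminus U)\setminus J\subseteq I$ any such $s$ lies in $J$; hence $(S/I)\setminus\bigl(U/(U\cap I)\bigr)\subseteq J\cup\{0\}$.

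It remains to see that $U/(U\cap I)\embedsin S/I$ is an epimorphism, and here I would simply transport the hypothesis along the Rees quotient morphism $\pi\colon S\to S/I$. Dominions are carried forward by surjective homomorphisms: if $d\in\Dom(U,S)$ and $\alpha,\beta\colon S/I\to T$ agree on $\pi(U)$, then $\alpha\pi$ and $\beta\pi$ agree on $U$, hence on $d$, so $\alpha$ and $\beta$ agree on $\pi(d)$. Since $\Dom(U,S)=S$ and $\pi(U)=U/(U\cap I)$, this gives $\Dom\bigl(U/(U\cap I),S/I\bigr)\supseteq\pi(S)=S/I$, which is exactly the assertion. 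I do not anticipate a serious obstacle; the only care needed is in orienting the $\mathcal J$-order correctly when deducing $I\cap J=\emptyset$, and in separating off the trivial case $S\setminus U\subseteq J$.
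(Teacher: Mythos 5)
Your proof is correct and follows essentially the same route as the paper: pass to a Rees quotient by an ideal that misses $J$ but absorbs every element of $S\setminus U$ outside $J$, and push the dominion forward along the quotient map. The only (immaterial) difference is the choice of ideal — you take the ideal generated by $(S\setminus U)\setminus J$, while the paper takes the larger ideal consisting of all $\mathcal J$-classes not above $J$; both satisfy the required properties, and your explicit verification that dominions are preserved under surjective homomorphisms fills in what the paper dismisses as "easy to see."
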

\begin{proof}
  Let $I$ be the ideal of~$S$ given by the union of all
  $\mathcal{J}$-classes which are not above $J$ in the partially
  ordered set $S/\mathcal{J}$. Then $I\cap J=\emptyset$ and the result
  is trivial if $I$ is empty, so we assume from hereon that $I$ is
  nonempty. Consider the Rees quotient semigroups $S/I$ and $U/(U\cap
  I)$. Then $U/(U\cap I)$ is a proper subsemigroup of $S/I$ and it is
  easy to see that the embedding $U/(U\cap I)\embedsin S/I$ is an
  epimorphism. Thus, we may assume that there is a zero outside $J$
  and $J$ is the unique nonzero $\Cl J$-minimal $\Cl J$-class
  of~$S/I$. The maximality of~$J$ implies that $(S/I)\setminus
  \bigl(U/(U\cap I)\bigr)\subseteq J\cup\{0\}$.
\end{proof}

\begin{prop}
  \label{p:amalgam}
  Let $S$ be a stable semigroup and $U$ be a proper subsemigroup of
  $S$ such that $S\setminus U$ is contained in a $\mathcal{D}$-class
  $D$. If there is an $\mathcal{L}$-class $L$ of $S$ such that $U\cap
  L=\emptyset$ then the embedding $U\embedsin S$ is not an
  epimorphism.
\end{prop}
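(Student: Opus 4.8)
The plan is to argue by contradiction, using Isbell's Zigzag Theorem together with the structural information we already have on $\Cl J$-maximal elements of $S\setminus U$. So assume the embedding $U\embedsin S$ is an epimorphism, i.e.\ $\Dom(U,S)=S$. Since $S\setminus U\subseteq D$, every element of $S\setminus U$ is $\Cl J$-maximal in $S\setminus U$ (its $\Cl J$-class $D$ meets $S\setminus U$, and any $t>d$ lies in a strictly higher $\Cl J$-class, hence in $U$). Fix $d\in S\setminus U$ with $d\in L$; such a $d$ exists precisely because $U\cap L=\emptyset$. By Proposition~\ref{p:J-max-regular}, $d$ is regular in~$S$, so $L$ contains an idempotent, and $D$ is a regular $\Cl D$-class.

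Next I would extract a zigzag for $d$ over $U$, say~\eqref{eq:Isbell-zigzag}, and track which $\Cl L$-class the relevant factors live in. From the chain of equalities~\eqref{eq:Isbell-zigzag-alt}, $d=x_iv_iy_i=x_{i+1}u_{i+1}y_i$ for each $i$ (with the convention $x_m=y_0=1$), and in particular $d=u_m y_{m-1}$. The key observation is that $d\mathrel{\Cl L}u_m y_{m-1}$ forces $d\mathrel{\Cl L}y_{m-1}$ when we are in a stable $\Cl D$-class: writing $d\leq y_{m-1}\leq d$ (the first since $d=u_my_{m-1}$, the second because $y_{m-1}$ is a factor of $d$ via the zigzag equations climbing back up, or directly since all factors of a zigzag element that are $\geq$ it are forced into $D$), we get $y_{m-1}\in D$, and then $d=u_my_{m-1}$ with $u_m\in U\cap D$ gives $d\mathrel{\Cl L}y_{m-1}$ by stability (from $u_m y_{m-1}\mathrel{\Cl J}y_{m-1}$). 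More carefully, I will run the inductive argument already used in Proposition~\ref{p:J-max-regular}: from $d=x_1u_1$ and $d$ being $\Cl J$-maximal in $S\setminus U$ together with regularity, one shows $x_i\in D$ and then, reading the dual factorizations, $y_i\in D$ and in fact $y_i\mathrel{\Cl L}d$ for all $i$ — the point being that each $y_i$ is obtained from $d$ by left-multiplication by $U$-elements within $D$, so stays in the same $\Cl L$-class as $d$. Consequently $y_{m-1}\in L$.

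Now the contradiction: the zigzag equation $u_m y_{m-1}=d$ together with $y_{m-1}\in L$ and $d\in L$ is consistent, but we also have $v_{m-1}y_{m-1}=d$ with $v_{m-1}\in U$; since $v_{m-1}y_{m-1}\mathrel{\Cl L}y_{m-1}$ (as $v_{m-1}y_{m-1}=d\mathrel{\Cl L}y_{m-1}$) and multiplication on the left does not move $\Cl L$-classes down within $D$, we get $v_{m-1}\mathrel{\Cl R}d$ — wait, that is not quite what I want. Let me instead push the $\Cl L$-class information one step further up the zigzag: from $x_{m-1}v_{m-1}=u_m$ and $d=x_{m-1}v_{m-1}y_{m-1}=u_my_{m-1}$, and from $d=x_{m-1}u_{m-1}y_{m-2}$, I chase $\Cl L$-classes to conclude that $u_m$, and hence $x_{m-1}v_{m-1}$, is $\Cl R$-related to $d$; iterating, every $v_i$ is forced to be $\Cl L$-equivalent to $d$ (reading $d=x_iv_iy_i$ with $x_i,y_i\in D$, $y_i\mathrel{\Cl L}d$, so $v_iy_i\mathrel{\Cl L}d$ and $v_i\mathrel{\Cl R}(v_iy_i)$ gives $v_i\in D$; then the left factorization shows $v_i\mathrel{\Cl L}d$). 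In particular $v_1\in L$, contradicting $U\cap L=\emptyset$. (If the zigzag has length $m=1$, the equations degenerate to $d=u_1 y_0=u_1$ after the convention, i.e.\ $d\in U$, already absurd; the case where the only $U$-factors are $u_1=u_m$ is handled the same way.)

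The main obstacle I anticipate is the bookkeeping in the middle step: making the claim "every $y_i$ lies in $L$" fully rigorous requires carefully combining the $\Cl J$-maximality of $d$ in $S\setminus U$ (to keep factors inside $D$), the regularity supplied by Proposition~\ref{p:J-max-regular}, and stability (to upgrade $\Cl J$-relations to $\Cl L$- or $\Cl R$-relations inside $D$), exactly in the style of the induction in Proposition~\ref{p:J-max-regular} but now tracking $\Cl L$-classes rather than just membership in $D$. Once that is in place, locating a $U$-factor of the zigzag inside $L$ is immediate and yields the contradiction with $U\cap L=\emptyset$. I would organize the write-up so that the $\Cl L$-class tracking piggybacks on the already-proved inductive step, rather than repeating it.
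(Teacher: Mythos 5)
Your strategy is genuinely different from the paper's: the paper does not use zigzags here at all, but instead shows that $V=\bigcup_{u\in U}L_u$ is a proper subsemigroup and builds an explicit amalgam of two copies of $S$ glued along $V$, yielding two distinct morphisms that agree on $U$. Your zigzag-chasing idea can be made to work, but as written it has two genuine gaps. First, you propose to ``run the inductive argument already used in Proposition~\ref{p:J-max-regular}'' starting from $d=x_1u_1$; that induction is powered by Lemma~\ref{l:J-max-non-regular}, which applies only to \emph{non-regular} $d$ (its proof derives a contradiction from two elements of $D$ multiplying into $D$), whereas your $d$ is regular by Proposition~\ref{p:J-max-regular} itself. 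So the upward induction does not transfer: you cannot conclude $u_1>d$, nor $y_i\in U$, from that source. Second, your final contradiction rests on ``$v_i\mathrel{\Cl R}(v_iy_i)$ gives $v_i\in D$'', hence $v_1\in L$; but stability upgrades the relation $v_iy_i\le v_i$ to $v_i\mathrel{\Cl R}v_iy_i$ only if you already know $v_i\mathrel{\Cl J}v_iy_i$, and nothing forces that: $v_i$ may lie strictly $\Cl J$-above $D$ (for instance $U$ may contain an identity of $S$ and $v_i$ may be that identity). The intermediate assertion $v_{m-1}y_{m-1}=d$ is also false; the zigzag only gives $x_{m-1}v_{m-1}y_{m-1}=d$.

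The argument closes correctly if you run a \emph{downward} induction that uses $U\cap L=\emptyset$ in every step and land the contradiction on $u_1$ rather than $v_1$. Base case: $d=u_my_{m-1}$ forces $y_{m-1}\in D$ (otherwise $y_{m-1}\in U$ by $\Cl J$-maximality and then $d=u_my_{m-1}\in U$), whence $y_{m-1}\mathrel{\Cl L}d$ by left stability. Inductive step: if $y_i\mathrel{\Cl L}d$, then $d=x_i(v_iy_i)$ and $v_iy_i\le_{\Cl L}y_i\mathrel{\Cl L}d$ give $v_iy_i\in L$, so $u_iy_{i-1}=v_iy_i\notin U$, so $y_{i-1}\notin U$, hence $y_{i-1}\in D$ and, by stability, $y_{i-1}\mathrel{\Cl L}d$. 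At $i=1$ the same chain yields $u_1=v_1y_1\in L\cap U$, the desired contradiction (regularity of $d$ is never needed). Note, finally, that even once repaired your proof delivers strictly less than the paper's: Theorem~\ref{t:DS} reuses the explicit amalgam $W$ from the paper's proof and verifies $W\in\pv{DS}$ in order to exclude epimorphisms \emph{within} the category $\pv{DS}$, whereas a zigzag argument only excludes epimorphisms in the category of all semigroups.
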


\begin{proof}
  Suppose that there is an $\mathcal{L}$-class $L$ of~$S$ which
  contains no elements of~$U$. By
  Lemma~\ref{l:unique-minimum-D-class}, we may assume that there is at
  most one element of~$S$ which is not $\Cl J$-above~$D$, and that it
  must be zero if it exists. Let
  \begin{displaymath}
    V=\bigcup\{L_u \in S/\Cl L: u\in U\}.
  \end{displaymath}
  Since $V\cap L=\emptyset$, $V$~is a proper subset of~$S$. We claim
  that $V$ is a subsemigroup of $S$. Let $v_1,v_2$ be arbitrary
  elements of $V$ and let $u_1, u_2\in U$ be such that
  $v_1\mathrel{\Cl L}u_1$ and $v_2\mathrel{\Cl L}u_2$. If $v_2\in U$
  then, as $\Cl L$ is a right congruence, we get
  \begin{displaymath}
    v_1v_2\mathrel{\Cl L}u_1v_2\in U,
  \end{displaymath}
  so that $v_1v_2\in V$. Otherwise, we have $v_2\in V\setminus
  U\subseteq D$, in which case, by stability, it follows that either
  $v_1v_2\mathrel{\Cl L}u_2$ or $v_1v_2=0$. Since $0\in U\subseteq V$,
  we conclude that $v_1v_2\in V$, thereby establishing the claim.

  Let $S'$ and $S''$ be two sets, each disjoint from $S$ such that
  there are bijections $\varphi:S\longrightarrow S'$ and $\psi:
  S\longrightarrow S''$ that coincide on~$V$ and map this set onto
  $S'\cap S''$, a set which we also denote by $V'$ and $V''$. For each
  $s\in S$, let $\varphi(s)=s'$ and $\psi(s)=s''$ respectively. Let
  $W=S'\cup S''$ and define a binary operation on $W$ as follows: for
  all $s, t\in S\setminus V$,
  \begin{align*}
    s't'&=(st)',\ s''t''=(st)''\\
    s't''&=(st)'',\ s''t'=(st)' \text{ in case } t\notin V.
  \end{align*}
  Thus, $S'$ and $S''$ are semigroups under the restrictionof this
  operation and $\varphi$ and $\psi$ are embeddings which coincide on~$V$.

  We now show that the above binary operation is associative, making
  $W$ into a semigroup. Take any $x, y, z\in W$. If all of $x, y, z$
  are in $S'$ or in $S''$ then clearly $(xy)z=x(yz)$. This is the case
  if two of $x, y, z$ are in $V'=V''$. To cover the remaining cases,
  by symmetry we may assume without loss of generality that precisely
  one of $x, y, z$ is in $S''\setminus V''$ and that at least one of
  $x, y, z$ is in $S' \setminus V'$.

  We have $x=r'$ or $x=r''$, $y=s'$ or $y=s''$, and $z=t'$ or $z=t''$
  for some $r, s, t\in S$. Note that
  $x(yz),(xy)z\in\{(rst)',(rst)''\}$. Now, if $st \in V$ then, since
  either $s\in S\setminus U\subseteq D$ or $t\in S\setminus U\subseteq
  D$ and $D\cup\{0\}$ is an ideal, it follows that $st\in
  V\cap(D\cup\{0\})$. Again, as $D\cup\{0\}$ is an ideal, we get
  $rst\in D\cup \{0\}$. If $rst=0$ then $rst\in V$ so $(rst)'=(rst)''$
  and in this case $(xy)z=x(yz)$. Otherwise, we have $rst\in D$ so
  that, by stability, $rst\mathrel{\Cl{L}}st$. Therefore, $rst\in V$
  and as above $(xy)z=x(yz)$.
  
  We may thus further assume that $st\in S\setminus V$. The following
  cases are sufficient to establish that $W$ is a semigroup. The cases
  are determined by which of the three factors $x,y,z$ belongs to
  $S''\setminus V''$; the other two belong then to~$S'$, and at least
  one of them to $S'\setminus V'$.
  \begin{itemize}[leftmargin=6pt,itemsep=3pt]
  \item[]\textbf{Case (i):} $x\in S''\setminus V''$. First, assume
    that $y\in S'\setminus V'$, so that
    $(xy)z=(r''s')t'=(rs)'t'=((rs)t)'=(r(st))'$. Since $st\in
    S\setminus V$, we get $(r(st))'=r''(st)'=r''(s't')=x(yz)$.
    Secondly, assume that $z\in S'\setminus V'$. Then, since $st\in
    S\setminus V$ and $xy\in\{(rs)',(rs)''\}$, we get
    $x(yz)=r''(s't')=r''(st)'=(rst)'=(rs)''t'=(rs)'t'=(xy)z$.
  \item[]\textbf{Case (ii):} $y\in S''\setminus V''$. First suppose
    that $z\notin S'\setminus V'$. Then, since $st\in S\setminus V$,
    we get $x(yz)=r'(s''t'')=r'(st)''=(r(st))''
    =((rs)t)''=(rs)''t''=(r's'')t''=(xy)z$. Next, assume that $z\in
    S'\setminus V'$. Then, we have
    $x(yz)=r'(s''t')=r'(st)'=(r(st))'=((rs)t)'=(rs)''t'=(r's'')t'=(xy)z$.
  \item[]\textbf{Case (iii):} $z\in S''\setminus V''$. Then, we get
    $(xy)z=(r's')t''=(rs)'t''=((rs)t)''=(r(st))''=r'(st)''=r'(s't'')=x(yz)$.
  \end{itemize}
  Thus, $W$ is indeed a semigroup.

  Finally, since $\varphi: S\longrightarrow W$ and $\psi:
  S\longrightarrow W$ are distinct morphisms which agree on $V$ and
  thus on $U$, the embedding $U\embedsin S$ is not an epimorphism, as
  required.
\end{proof}

\section{Epimorphisms to semigroups from \texorpdfstring{\pv{DS}}{DS}}
\label{sec:epis-into-DS}

In this section, we deal with epimorphisms into members of the
pseudovariety \pv{DS} with the goal of proving that they are onto. For
that purpose, it suffices to show that no proper subsemigroup $U$ of a
semigroup $S$ from~\pv{DS} is epimorphically embedded in~$S$.

\begin{thm}
  \label{t:DS}
  Let $S\in \pv{DS}$ and $U$ be a proper subsemigroup of $S$. Then the
  embedding of $U$ in $S$ cannot be an epimorphism in~$\pv{DS}$.
\end{thm}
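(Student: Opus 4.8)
The plan is to argue by contradiction. Suppose $S\in\pv{DS}$, $U$ is a proper subsemigroup, and the inclusion $U\embedsin S$ is an epimorphism, i.e.\ $\Dom(U,S)=S$; I will produce two distinct homomorphisms out of $S$ which agree on $U$. Since $\pv{DS}$ is a pseudovariety of finite semigroups, $S$ is finite, hence stable, so Proposition~\ref{p:J-max-regular}, Corollary~\ref{c:spuriousness} and Proposition~\ref{p:amalgam} are all applicable.

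The first step is to reduce to the case in which $S\setminus U$ is contained in a single $\Cl D$-class. Choose a $\Cl J$-class $J$ that is maximal among those meeting $S\setminus U$; then any $d\in J\cap(S\setminus U)$ is $\Cl J$-maximal in $S\setminus U$, so by Proposition~\ref{p:J-max-regular} it is regular, $J$ is a regular $\Cl D$-class, and therefore — by the definition of $\pv{DS}$ — $J$ is a subsemigroup of $S$. Lemma~\ref{l:unique-minimum-D-class} now supplies an ideal $I$ of $S$ with $I\cap J=\emptyset$ such that $U/(U\cap I)$ is a proper subsemigroup of $S/I$, the embedding $U/(U\cap I)\embedsin S/I$ is still an epimorphism, and $(S/I)\setminus\bigl(U/(U\cap I)\bigr)\subseteq J\cup\{0\}$; moreover $S/I\in\pv{DS}$ because $\pv{DS}$ is closed under homomorphic images. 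Replacing $(S,U)$ by $(S/I,U/(U\cap I))$ and, if necessary, adjoining the zero to $U$ (which preserves epimorphicity and, apart from the degenerate situation in which $S\setminus U=\{0\}$ with $0\notin U$ — which cannot occur, since a minimum-length zigzag for that zero would force a zigzag factor from $U$ to equal $0$ — also preserves properness), I may assume that $S\setminus U$ lies inside a single regular $\Cl D$-class $D$, which is a subsemigroup of $S$.

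With this in place the argument is short. Because $D$ is a subsemigroup, $U\cap D$ is closed under multiplication, and every element of $S\setminus U$ lies in $D$ and is therefore $\Cl J$-maximal in $S\setminus U$; hence Corollary~\ref{c:spuriousness} yields an $\Cl R$-class or an $\Cl L$-class contained in $D$ that contains no element of $U$. By left--right duality we may assume it is an $\Cl L$-class $L$ with $U\cap L=\emptyset$. Then $S$ is stable, $S\setminus U$ is contained in the single $\Cl D$-class $D$, and $L$ is an $\Cl L$-class avoiding $U$, so Proposition~\ref{p:amalgam} tells us that $U\embedsin S$ is not an epimorphism — contradicting $\Dom(U,S)=S$.

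To obtain the statement in the sharper form asserted (non-epimorphism within $\pv{DS}$, not merely within the category of all semigroups), I would finally verify that the amalgam $W=S'\cup S''$ built in the proof of Proposition~\ref{p:amalgam} actually belongs to $\pv{DS}$: since $U\subseteq V$, the two copies are glued along everything outside $D$, so the regular $\Cl D$-classes of $W$ lying above $D$ are isomorphic to regular $\Cl D$-classes of $S$ (hence subsemigroups), while the unique bottom $\Cl D$-class of $W$ is the ``doubled'' copy of $D$ and is checked directly to be closed under the glued multiplication. The two distinct maps $S\to W$ produced in that proof are then morphisms of $\pv{DS}$, as required. I expect the bookkeeping around the zero in the reduction step, together with this last check that the amalgam stays in $\pv{DS}$, to be the only delicate points; the conceptual core is carried entirely by Corollary~\ref{c:spuriousness} and Proposition~\ref{p:amalgam}.
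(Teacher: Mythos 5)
Your overall route coincides with the paper's: reduce via Lemma~\ref{l:unique-minimum-D-class} to the case $S\setminus U\subseteq D$ for a single regular $\Cl D$-class $D$ (which is a subsemigroup since $S\in\pv{DS}$), invoke Corollary~\ref{c:spuriousness} to find an $\Cl L$-class (or $\Cl R$-class, removed by self-duality) missing $U$, and then apply Proposition~\ref{p:amalgam}. Your extra care with the adjoined zero in the reduction step is welcome; the paper glosses over the discrepancy between the lemma's conclusion $\subseteq J\cup\{0\}$ and the containment $\subseteq D$ that it actually uses.

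The one genuine soft spot is the final verification that the amalgam $W=S'\cup S''$ lies in $\pv{DS}$, which you only sketch, and the sketch as stated does not close. Checking that the ``doubled'' bottom layer $D'\cup D''$ is closed under the glued multiplication is not the same as checking that every \emph{regular $\Cl D$-class of $W$} contained in it is a subsemigroup: a union of $\Cl H$-classes can be closed under multiplication while splitting into several $\Cl D$-classes of $W$, none of which need be closed (compare $B_2$, which is a regular subsemigroup whose nonzero $\Cl D$-class is not a subsemigroup). So you would additionally have to show that $D'\cup D''$ forms a single $\Cl D$-class of $W$, and likewise justify that the Green structure of $W$ above the bottom really mirrors that of $S$. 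The paper sidesteps all of this with a one-line computation: if $w\in W$ is regular, say $w=s'=s't''s'$, then $s't''s'=s't's'$, so $s$ is regular in $S'\cong S\in\pv{DS}$ and hence lies in a subgroup; since every regular element of the finite semigroup $W$ then lies in a subgroup, Fact~\ref{f:M-C} gives that every regular $\Cl D$-class of~$W$ is a subsemigroup. I recommend replacing your structural sketch by this argument (or fully carrying out the $\Cl D$-class analysis); everything else in your proposal is sound.
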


\begin{proof}
  Let $S\in \pv{DS}$ and $U$ be a proper subsemigroup of $S$. Suppose
  to the contrary that the embedding $ U\embedsin S$ is an epimorphism
  in~$\pv{DS}$. Let $d$ be a $\mathcal{J}$-maximal element of
  $S\setminus U$ and let $D$ be its $\mathcal{D}$-class in $S$. By
  Lemma~\ref{l:unique-minimum-D-class}, there exists an ideal $I$ of
  $S$ such that $U/(U\cap I)$ is a proper subsemigroup of $S/I$, the
  embedding $U/(U\cap I)\embedsin S/I$ is an epimorphism, $I\cap
  D=\emptyset$, and $S/I\setminus U/(U\cap I)\subseteq D$. Then, $S/I
  \in \pv{DS}$ and $U/(U\cap I)$ is a proper subsemigroup of $S/I$.
  So, the embedding $U/(U\cap I)\embedsin S/I$ is an epimorphism in
  $\pv{DS}$. Therefore, without loss of generality, replacing $S$ by
  $S/I$ and $U$ by $U/(U\cap I)$ we can assume that $S\setminus
  U\subseteq D$.

  Note that $U\cap D$ is closed under multiplication because both $U$
  and $D$ are. By Corollary~\ref{c:spuriousness}, $U$ must have
  trivial intersection with some $\Cl R$-class or some $\Cl L$-class
  within~$D$. Since \pv{DS} is self-dual, by reversal of the semigroup
  operation, we may as well assume that there is an $\Cl L$-class $L$
  in $D$ such that $L\cap U=\emptyset$. By
  Proposition~\ref{p:amalgam}, the inclusion mapping $U\embedsin S$
  cannot be an epimorphism in the category of all semigroups. To prove
  that it cannot also be an epimorphism in $\pv{DS}$ we now show that
  the semigroup $W$ constructed in the proof of
  Proposition~\ref{p:amalgam} lies in the pseudovariety $\pv{DS}$, by
  showing that every regular element of $W$ lies in some subgroup of
  $W$. Let $w$ be a regular element in $W$ and choose $u\in W$ such
  that $w=wuw$. By symmetry of the roles of $S'$ and $S''$, we may
  assume that $w\in S'$. Let $w=s'$ and suppose that $u=t''$. Then, we
  have
  \begin{displaymath}
    w=s'=s't''s'=s't's'
  \end{displaymath}
  so that $w$ is regular in the semigroup $S'$ from~$\pv{DS}$, whence
  $s$ lies in some subgroup of $W$, thereby completing the proof of
  the theorem.
\end{proof}

The following corollary is now immediate.

\begin{cor}
  Let $S\in \pv{DS}$ and $T$ be any semigroup. If $\alpha :
  T\longrightarrow S$ is an epimorphism
  then it must be onto.
\end{cor}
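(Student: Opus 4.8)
The plan is to deduce this directly from Theorem~\ref{t:DS}. Recall from Section~\ref{sec:intro} that a morphism $\alpha\colon T\to S$ is an epimorphism if and only if the inclusion $T\alpha\embedsin S$ is an epimorphism, equivalently $\Dom(T\alpha,S)=S$. So I would argue by contraposition: suppose $\alpha$ is not onto, so that $U:=T\alpha$ is a \emph{proper} subsemigroup of $S$, and suppose for contradiction that $\alpha$ is an epimorphism in the category of all semigroups.

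Since $S\in\pv{DS}$ and $\pv{DS}$ is closed under taking subsemigroups, we have $U\in\pv{DS}$. If $\alpha$ were an epimorphism, then the inclusion $U\embedsin S$ would be an epimorphism in the category of all semigroups, hence also an epimorphism in the full subcategory $\pv{DS}$ — this is the easy direction, since passing to a smaller category only removes candidate test morphisms. But this contradicts Theorem~\ref{t:DS}, which asserts precisely that the embedding of a proper subsemigroup of a member of $\pv{DS}$ into that member can never be an epimorphism in $\pv{DS}$. Therefore $\alpha$ cannot be an epimorphism, and the contrapositive yields the claim.

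There is no real obstacle here: the corollary is a formal consequence of Theorem~\ref{t:DS} together with the standard reformulation of ``epimorphism'' in terms of the dominion of the image. The one point worth flagging is the direction of the implication relating the two notions of epimorphism (in the category of all semigroups versus in $\pv{DS}$): Theorem~\ref{t:DS} only rules out the weaker one, which is exactly what is needed, since an epimorphism in the ambient category is in particular an epimorphism in $\pv{DS}$.
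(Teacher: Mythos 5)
Your proof is correct and is exactly the ``immediate'' deduction the paper intends: reduce to the image $U=T\alpha$ being a proper subsemigroup, note $U\in\pv{DS}$ by closure under subsemigroups, and invoke Theorem~\ref{t:DS}, using that an epimorphism in the category of all semigroups restricts to an epimorphism in the full subcategory $\pv{DS}$. The direction-of-implication point you flag is handled correctly, so there is nothing to add.
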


\section{Epimorphisms from semigroups in
  \texorpdfstring{$\bigcup_{i=1,2,3}\pv V_i$}{UiVi}}
\label{sec:Vi}

The purpose of this section is to establish that all semigroups from
the pseudovarieties $\pv{V}_i$ of Section~\ref{sec:intro} are
F-saturated. In fact, for the pseudovariety, $\pv{V}_1$, we obtain a
better result.

\begin{thm}
  \label{t:V1-saturated}
  The pseudovariety $\pv V_1$ is saturated and hence epimorphically closed.
\end{thm}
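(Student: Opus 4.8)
The plan is to argue by contradiction, combining the minimum-order analysis of Proposition~\ref{p:min-non-saturated-in-pv} with the classical fact that finite regular semigroups are saturated. Suppose that $\pv V_1$ is not saturated, and choose a member $U$ of~$\pv V_1$ that is non-saturated and of minimum order among members of~$\pv V_1$ with this property. Then Proposition~\ref{p:min-non-saturated-in-pv}\ref{item:min-non-saturated-in-pv-1} applies and yields $U=EUE$, where $E=E(U)$. (Note that only part~\ref{item:min-non-saturated-in-pv-1} of that proposition is needed for $\pv V_1$; parts \ref{item:min-non-saturated-in-pv-2}--\ref{item:min-non-saturated-in-pv-4} would only be relevant in the fall-back discussed below.)

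The key step is to extract complete regularity of~$U$ from the defining pseudoidentity of $\pv V_1$. Recall that $S\in\pv V_1$ exactly when $(exf)^{\omega+1}=exf$ holds for all idempotents $e,f\in E(S)$ and all $x\in S$. Now every $u\in U=EUE$ can be written as $u=exf$ with $e,f\in E$ and $x\in U$; since $U\in\pv V_1$, this forces $u^{\omega+1}=u$, i.e.\ $u$ lies in a subgroup of~$U$ (a possible zero being in the trivial group). As this holds for every $u\in U$, the semigroup $U$ is a finite completely regular semigroup.

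But a finite completely regular semigroup is in particular a finite regular semigroup, hence saturated by \cite[Theorem~1]{Hall&Jones:1983}; this contradicts the choice of~$U$. Therefore $\pv V_1$ has no non-saturated member, that is, $\pv V_1$ is saturated, and since a pseudovariety is closed under homomorphic images, it is epimorphically closed by the remarks in Section~\ref{sec:intro}. Note that, unlike in the F-versions, this argument bounds dominions in \emph{all} oversemigroups, which is why one obtains the stronger conclusion here. The one point I would double-check is the exact scope of \cite[Theorem~1]{Hall&Jones:1983}: the argument above uses it for arbitrary finite regular (indeed completely regular) semigroups, whereas in Corollary~\ref{c:spuriousness} it is invoked only for finite unions of groups lying inside a single $\Cl D$-class. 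Should the cited result be available only in the latter form, the main obstacle would be to first reduce the finite completely regular semigroup $U$ to that situation — using Proposition~\ref{p:min-non-saturated-in-pv}\ref{item:min-non-saturated-in-pv-4} together with Corollary~\ref{c:spuriousness} to force all nonzero elements of $U$ into one $\Cl D$-class of the oversemigroup and hence of~$U$ — but I expect the cited theorem to cover all finite regular semigroups directly, so that no such reduction is required.
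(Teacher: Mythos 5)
Your proof is correct and follows essentially the same route as the paper: minimality plus Proposition~\ref{p:min-non-saturated-in-pv}\ref{item:min-non-saturated-in-pv-1} gives $U=EUE$, the pseudoidentity of $\pv V_1$ then makes $U$ completely regular, and \cite[Theorem~1]{Hall&Jones:1983} (which the paper itself invokes for arbitrary finite regular semigroups, so your caveat is unnecessary) yields the contradiction.
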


\begin{proof}
  Suppose that $\pv V_1$ contains a non-saturated semigroup $U$, which
  we may assume to be of minimum order for that property. By
  Proposition~\ref{p:min-non-saturated-in-pv}\ref{item:min-non-saturated-in-pv-1},
  we deduce that $U=EUE$. As the assumption that $U\in\pv V_1$ means
  precisely that $EUE$ is a completely regular semigroup and all
  finite regular semigroups are known to be saturated
  \cite[Theorem~1]{Hall&Jones:1983}, we reach a contradiction. Hence,
  $\pv V_1$ is saturated.
\end{proof}

We proceed with the key property of semigroups from the
pseudovarieties $\pv V_i$ that we require.

\begin{lemma}
  \label{l:avoiding-Y}
  Let $S$ be a semigroup and let $U$ be a subsemigroup belonging to at
  least one of the pseudovarieties $\pv V_i$ ($i=1,2,3$). Suppose that
  $e,u,f$ are elements of~$U$ such that $e$ and $f$ are idempotents
  and the relation $u\in R_e\cap L_f$ holds in~$S$. Then $e,u,f$ are
  $\Cl D$-equivalent in~$U$ and $u^{\omega+1}=u$.
\end{lemma}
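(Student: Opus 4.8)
The plan is to work inside $U$ and exploit the defining pseudoidentity of whichever $\pv V_i$ contains $U$. Write $e=t^\omega$ and $f=z^\omega$ as instances of idempotents, so that the pseudoidentities read $(exf)^{\omega+1}=exf$, or $exf(ef)^\omega=exf$, or $(ef)^\omega exf=exf$, each holding for all substitutions of $e,x,f$ by elements of~$U$ (with $e,f$ idempotents). The hypothesis is that in the \emph{overgroupoid} $S$ we have $u\in R_e\cap L_f$; since $e,f,u$ all lie in~$U$, this is a statement about Green's preorders in~$S$ restricted to elements of~$U$, and the first task is to pull it back to a usable algebraic identity among $e,u,f$.

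First I would record the consequences of $u\in R_e\cap L_f$ in~$S$. From $u\in R_e$ we get $u\le_{\Cl R} e$ and $e\le_{\Cl R} u$, hence there are $p,q\in S^1$ with $u=ep$ and $e=uq$; combined with $e^2=e$, a standard manipulation (as in Fact~\ref{f:M-C}) gives $eu=u$, and similarly $uf=u$ from $u\in L_f$. Moreover, $e\le_{\Cl R}u$ together with $uf=u$ yields that $e$ factors through $u$ on the right in a way that, once we also use $u\in L_f$ and $e\le_{\Cl L}$ something in $R_e$, lets us write $e=uf'$ and $f=e'u$ for suitable $e',f'\in S^1$ — but these auxiliary factors live in~$S$, not in~$U$, so I cannot substitute them into the pseudoidentity directly. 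The point of Fact~\ref{f:M-C} is exactly to avoid that: since $u=eu=uf$ and the product $e\cdot u$ stays $\Cl R$-related to $e$ while $u\cdot f$ stays $\Cl L$-related to $f$, the fact tells us $L_e\cap R_f$ (computed in~$S$) contains an idempotent, say $g$; then $g\in U$? — not obviously, and that is the crux (see below). Granting for the moment that we can realize the relevant idempotent inside~$U$, or equivalently that $ef$ or $fe$ behaves like an idempotent-power anchor, I would then feed the triple into the pseudoidentity.

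The decisive computation: apply the $\pv V_i$-pseudoidentity with the substitution $e\mapsto e$, $x\mapsto u$, $f\mapsto f$, using $eu=u=uf$. Then $exf$ evaluates to $euf=u$ in~$U$. In case $U\in\pv V_1$ we get immediately $u^{\omega+1}=(euf)^{\omega+1}=euf=u$. In case $U\in\pv V_2$ we get $u=euf\cdot(ef)^\omega$, so $u=u(ef)^\omega$; iterating, $u=u(ef)^{n\omega}$, and since $(ef)^\omega$ is idempotent this gives $u(ef)^\omega=u$, and then I would use $uf=u$ and $eu=u$ to collapse $(ef)^\omega$ against $u$ and again extract $u^{\omega+1}=u$ — the details here are the routine manipulation I will not grind through, but they amount to showing $u$ lies in the $\Cl H$-class of the idempotent $(uf'e'u)$-type element it generates. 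Case $\pv V_3$ is dual. Having $u^{\omega+1}=u$ in~$U$ means $u$ lies in a subgroup of~$U$, whose identity is $u^\omega$; so $u\mathrel{\Cl H}u^\omega$ in~$U$, and it remains to see $u\mathrel{\Cl D}e$ and $u\mathrel{\Cl D}f$ in~$U$. For that I would show $u^\omega\mathrel{\Cl R}e$ and $u^\omega\mathrel{\Cl L}f$ \emph{in~$U$}: from $eu=u$ and $uu^{\omega-1}\cdots$ one gets $eu^\omega=u^\omega$ and $u^\omega=u\cdot u^{\omega-1}$ so $u^\omega\le_{\Cl R}u\le_{\Cl R}e$ within~$U$, while $e=e\cdot? $ needs $e\le_{\Cl R}u^\omega$ in~$U$, which follows since $e$, $u$ and $u^\omega$ all generate the same right ideal once we know $e\le_{\Cl R}u$ holds in~$S$ and everything is inside the single $\Cl D$-class that $u^{\omega+1}=u$ forces. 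Symmetrically on the left with~$f$. Concluding, $e\mathrel{\Cl R}u\mathrel{\Cl L}f$ in~$U$, so all three are $\Cl D$-equivalent in~$U$, as claimed.

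The main obstacle is precisely the passage from $u\in R_e\cap L_f$ \emph{in~$S$} to the corresponding relations \emph{in~$U$}: a priori $U$ need not be full in~$S$, so the $S$-factorizations witnessing the Green relations may use elements outside~$U$. The resolution is that we do \emph{not} need the full Green relations in~$U$ a priori; we only need the two one-sided \emph{absorption} identities $eu=u$ and $uf=u$, and those are genuinely identities in~$S$ that happen to involve only elements of~$U$, hence hold in~$U$. Everything else is then derived purely inside~$U$ from the pseudoidentity of $\pv V_i$ together with $e^2=e$, $f^2=f$, $eu=u=uf$. So the plan is: (1) derive $eu=u$ and $uf=u$ in~$S$ (valid in~$U$); (2) substitute into the $\pv V_i$-pseudoidentity to obtain $u^{\omega+1}=u$; (3) deduce $u\mathrel{\Cl H}u^\omega$ in~$U$ and then $e\mathrel{\Cl R}u^\omega\mathrel{\Cl L}f$ in~$U$, giving $\Cl D$-equivalence.
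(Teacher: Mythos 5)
Your skeleton matches the paper's: derive $eu=u=uf$ from $u\in R_e\cap L_f$, substitute into the defining pseudoidentity to get $u^{\omega+1}=u$, $u(ef)^\omega=u$, or $(ef)^\omega u=u$, dispose of the $\pv V_1$ case immediately, and finish by placing $e\mathrel{\Cl R}u^\omega\mathrel{\Cl L}f$ inside $U$. But the decisive step for $\pv V_2$ (and dually $\pv V_3$) --- passing from $u(ef)^\omega=u$ to $u^{\omega+1}=u$ --- is exactly the part you wave off as ``routine manipulation I will not grind through'', and the method you announce for it (a purely equational derivation inside $U$ from $e^2=e$, $f^2=f$, $eu=u=uf$ and $u(ef)^\omega=u$) provably cannot work. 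Counterexample: $U=\{1,u,0\}$ with $u^2=0$ and $e=f=1$ satisfies all of those identities and lies in $\pv V_2$, yet $u^{\omega+1}=0\ne u$. So this implication genuinely needs the hypothesis $u\in R_e\cap L_f$ \emph{in $S$} at this stage, not merely its equational shadow; the fact that this particular $U$ cannot sit inside any $S$ with $u\in H_1$ is precisely the information your reduction discards. The paper's actual argument here is a Green's-relations computation in $S$: from $u=u(ef)^\omega$ and $u\mathrel{\Cl L}f$ one gets $(ef)^\omega\mathrel{\Cl L}f$, hence $f(ef)^\omega=f$ and $(ef)^{\omega+1}=ef$, so $H_{ef}$ is a group; Fact~\ref{f:M-C} then places $ef$, and with it $u$, in the group $\Cl H$-class $R_e\cap L_f$, giving $u^{\omega+1}=u$.

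Your final step also needs repair: ``$e$, $u$ and $u^\omega$ all generate the same right ideal \dots\ since everything is inside the single $\Cl D$-class'' is not an argument, because Green's relations do not restrict from $S$ to a subsemigroup $U$. The correct device (and the paper's) is idempotent absorption: $e$ and $u^\omega$ are $\Cl R$-equivalent idempotents of $S$, so $u^\omega e=e$ and $eu^\omega=u^\omega$, and these two equalities, involving only elements of $U$, witness $e\mathrel{\Cl R}u^\omega$ in $U$; dually $fu^\omega=u^\omega$ and $u^\omega f=f$ give $f\mathrel{\Cl L}u^\omega$ in $U$, and $u^{\omega+1}=u$ gives $u\mathrel{\Cl H}u^\omega$ in $U$.
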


\begin{proof}
  As $e,u,f\in U$ and $u=euf$, we deduce that at least one of the
  following equalities holds, corresponding to whether $U$ belongs to
  $\pv V_1$, $\pv V_2$, or~$\pv V_3$:
  \begin{displaymath}
    u^{\omega+1}=u,\quad
    u(ef)^\omega=u,\quad
    (ef)^\omega u=u.   
  \end{displaymath}
  In the first case, we conclude that the $\Cl H$-class of~$u$ in~$S$
  is a group. As the other two cases are dual, we assume that
  $u(ef)^\omega=u$. By Green's Lemma, there is $s\in S$ such that
  $s\in R_f\cap L_e$ such that $us=e$ and $su=f$. From the equality
  $u=u(ef)^\omega$ we see that $ue\in R_e$ and $ef,(ef)^\omega\in
  L_f$. Since $(ef)^\omega$ is idempotent, we get the equality
  $f(ef)^\omega=f$, so that $(ef)^{\omega+1}=ef$,
  $(fe)^{\omega+1}=fe$, and the $\Cl H$-classes $H_{ef}$ and $H_{fe}$
  are groups. By Fact~\ref{f:M-C}, it follows that $u$ is an element
  of the group $H_{ef}$ and so $u^{\omega+1}=u$.

  As $(ef)^\omega,e,f$ are idempotents from $U$, we conclude that $e
  \mathrel{\Cl R} u^\omega \mathrel{\Cl L} f$ holds in~$U$. As
  $u^{\omega+1}=u$, it follows that $e,u,f$ remain $\Cl D$-equivalent
  in~$U$.
\end{proof}

To be able to invoke Corollary~\ref{c:spuriousness}, we proceed by
establishing the following technical result.

\begin{prop}
  \label{p:abundance-of-idempotents}
  Let $S$ be a stable semigroup and $D$ one of its regular $\Cl
  D$-classes. Suppose that $U$ is a subsemigroup of~$S$ that belongs
  to at least one of the pseudovarieties $\pv V_i$ ($i=1,2,3$) and
  contains at least one element from each $\Cl R$-class and each $\Cl
  L$-class within $D$. Then $U$ contains at least one idempotent from
  each $\Cl R$-class and each $\Cl L$-class within $D$.
\end{prop}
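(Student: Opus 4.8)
The plan is to fix an $\Cl R$-class $R$ within $D$ and produce an idempotent of $U$ in $R$; the argument for $\Cl L$-classes is dual. Pick $u\in U\cap R$ and pick $v\in U\cap L_u$ (which exists since $U$ meets the $\Cl L$-class of $u$). Then $uv$ lies in $R_u\cap L_v$ inside $D$, and since $S$ is stable, $uv\in R_u$ forces $H_u$ to be a group in $S$ by Fact~\ref{f:M-C}. The key point is that $U$ need not contain the idempotent of $H_u$; what we do get for free is that $u^n$ stays in $R_u=R$ as long as it stays in $D$, but a priori some power of $u$ could drop below $D$.

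So the first real step is to rule that out: I would show that $u$ generates, inside $U$, a cyclic subsemigroup all of whose elements lie in $D$, so that $u^\omega$ (computed in $U$) is an idempotent of $U$ lying in $R_u=R$. To do this, apply Lemma~\ref{l:avoiding-Y}: we need idempotents $e\in R_u$ and $f\in L_u$ \emph{in $S$} together with the membership $u\in R_e\cap L_f$, and then the lemma gives $u^{\omega+1}=u$ in $U$ and that $e,u,f$ are $\Cl D$-equivalent in $U$. The idempotents $e\in R_u$ and $f\in L_u$ do exist in $S$ because $D$ is a regular $\Cl D$-class of $S$; the hypothesis "$U$ belongs to one of the $\pv V_i$" is exactly what powers the lemma. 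From $u^{\omega+1}=u$ in $U$ we conclude that $u^\omega\in E(U)$ and, since $\Cl R$ in $U$ refines $\Cl R$ in $S$ and $u^\omega\mathrel{\Cl R}u$ in $U$, we get $u^\omega\in R_u=R$ in $S$. Hence $U$ contains an idempotent of every $\Cl R$-class of $S$ within $D$.

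The dual argument (swap the roles of $\Cl R$ and $\Cl L$, using the factorization $vu\in L_u\cap R_v$ and the dual conclusion of Lemma~\ref{l:avoiding-Y}) gives an idempotent of $U$ in every $\Cl L$-class within $D$, completing the proof. The main obstacle I anticipate is the bookkeeping around \emph{which} semigroup a given Green relation is computed in: the idempotents $e,f$ needed to invoke Lemma~\ref{l:avoiding-Y} must be exhibited in $S$, whereas the idempotent we finally extract ($u^\omega$ as computed in $U$) must be recognized as lying in the prescribed $\Cl R$-class of $S$. Both transitions are routine — $\Cl R^U\subseteq\Cl R^S$ always, and the existence of $e,f$ in $S$ is just regularity of $D$ in $S$ — but they are the places where care is required, and they are the reason the statement needs $D$ to be a regular $\Cl D$-class of $S$ rather than merely of $U$.
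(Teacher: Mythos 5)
There is a genuine gap, and it sits exactly at the step you describe as the engine of the argument. Lemma~\ref{l:avoiding-Y} requires $e,u,f$ to be elements of~$U$, with $e$ and $f$ idempotents \emph{of $U$}; its proof substitutes $e$ and $f$ into the pseudoidentity ($exf(ef)^\omega=exf$, etc.) that $U$ satisfies as a member of~$\pv V_i$, and pseudoidentity variables range over the semigroup that satisfies the pseudoidentity, namely $U$. You instead take $e\in R_u$ and $f\in L_u$ to be idempotents of~$S$ supplied by the regularity of~$D$; nothing forces these to lie in~$U$, and $S$ itself is only assumed stable, not a member of any~$\pv V_i$, so there is no pseudoidentity available to apply to them. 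With that application removed, your proof has no mechanism for producing $u^{\omega+1}=u$, and the whole argument collapses. Indeed, if the lemma did apply with idempotents of~$S$, the proposition would follow almost immediately for every $u\in U\cap D$ without using the hypothesis that $U$ meets every $\Cl R$- and $\Cl L$-class of~$D$ in any essential way; that hypothesis is a sign that more work is needed.

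The paper's proof shows what that work is: the difficulty is precisely to \emph{manufacture} idempotents of~$U$ before Lemma~\ref{l:avoiding-Y} can be invoked. Starting from $u\in U\cap D$, one repeatedly picks an idempotent $e_n\in E(S)\cap L_{u_n}$ and an element $v_n\in U\cap R_{e_n}$ (this is where the covering hypothesis on $\Cl R$-classes is used), so that $u_{n+1}=u_nv_n$ stays in $U\cap R_u$ by Fact~\ref{f:M-C}; finiteness of~$U$ then yields, by the pigeonhole/Ramsey argument of Lemma~\ref{l:O3,4}, an idempotent product $e=v_m\cdots v_n\in U$, which stability places in $L_{u_{n+1}}$. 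Only after this (and its dual) does one have idempotents $e,f\in U\cap D$ with $wuv\in R_f\cap L_e$, at which point Lemma~\ref{l:avoiding-Y} legitimately applies and delivers the idempotents $(uvw)^\omega\in U\cap R_u$ and $(vwu)^\omega\in U\cap L_u$. You would need to incorporate some version of this construction; the final bookkeeping about $\Cl R^U\subseteq\Cl R^S$ that you flag as the delicate point is, by comparison, harmless.
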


\begin{proof}
  In this proof, all of Green's relations are those of~$S$. Let $u$ be
  an arbitrary element of~$U\cap D$. We show that there are
  idempotents in~$U\cap L_u$ and in~$U\cap R_u$.

  Let $u_1=u$. Since $D$ is regular, there is an idempotent $e_1$
  in~$L_{u_1}$ and we choose $v_1\in R_{e_1}\cap U$. Let $u_2=u_1v_1$,
  which is an element of~$U\cap R_u$ by Fact~\ref{f:M-C}. Assuming
  inductively that $v_1,\ldots,v_{n-1}\in D$ have been chosen with
  $u_{k+1}=u_kv_k\in D$ ($k=1,\ldots,n-1$), we pick $e_n\in E(S)\cap
  L_{u_n}$ and $v_n\in U\cap R_{e_n}$. Then
  $u_{n+1}=u_nv_n=u_1v_1v_2\ldots v_n$ is again an element of $U\cap
  R_u$. Since $U$ is finite, as in the proof of Lemma~\ref{l:O3,4},
  there are positive integers $m$ and $n$ such that $m<n$ and the
  element $e=v_mv_{m+1}\ldots v_n$ of~$U$ is idempotent. As
  \begin{displaymath}
    u\mathrel{\Cl R}u_{n+1}=u_{n+1}e\le_{\Cl L} e\mathrel{\Cl J}u_,
  \end{displaymath}
  we conclude by stability of~$S$ that $e$ belongs to~$L_{u_{n+1}}$.

  The preceding paragraph shows that, for every element $u\in U\cap
  D$, there is another element $v\in U\cap D$ such that
  $uv\mathrel{\Cl L}e$ for some idempotent $e\in U\cap D$. Dually,
  there is some $w\in U\cap D$ such that $wu\mathrel{\Cl R}f$ for some
  idempotent $f\in U\cap D$. The partial ``eggbox diagram'' of~$D$
  (where rows are $\Cl R$-classes and columns are $\Cl L$-classes, and
  each cell may contain many other elements) depicted below may help
  visualizing the remainder of the proof:
  \smallskip
  \begin{displaymath}
    \begin{array}[c]{|C{12mm}|C{12mm}|C{12mm}|C{12mm}|C{10mm}}
     \hline
      $u$ & $uv$& & $(uvw)^\omega$ & $\cdots$ \\
      \hline
          & $e$ && & \\
      \hline
      $wu$& $wuv$&$f$& & \\
      \hline
      $(vwu)^\omega$& && & \\
      \hline
      $\vdots$& && & $\ddots$
    \end{array}
    \smallskip
  \end{displaymath}
  Since $wu\mathrel{\Cl{L}}u\mathrel{\Cl{R}}uv$, by stability, $\Cl L$
  is a right congruence, and $\Cl R$ is a left congruence, we deduce
  that
  \begin{displaymath}
    e \mathrel{\Cl L} uv \mathrel{\Cl L} wuv
    \mathrel{\Cl R} wu \mathrel{\Cl R} f.
  \end{displaymath}
  By Lemma~\ref{l:avoiding-Y}, it follows that $(wuv)^\omega\in D$,
  which entails
  \begin{displaymath}
    uv
    =uv(wuv)^\omega
    =(uvw)^\omega uv.
  \end{displaymath}
  Hence, the idempotent $(uvw)^\omega$ belongs to $U\cap R_u$.
  Similarly, the idempotent $(vwu)^\omega$ belongs to~$U\cap L_u$,
  which completes the proof of the proposition.
\end{proof}

Following~\cite[Section~12.2]{Almeida:1994a}, we say that a semigroup
$S$ satisfies the \emph{$\Cl J$ ascending chain condition ($\Cl
  J$-acc)} if there is no infinite ascending chain $s_1<s_2<\cdots$ in $S$.

We are now ready for the main result of this section.

\begin{thm}
  \label{t:Vi}
  Let $S$ be a stable semigroup with $\Cl J$-acc and $U$ be a proper
  subsemigroup of $S$ which lies in at least one of the
  pseudovarieties $\pv V_i$ ($i=2,3$). Then the inclusion of $U$ in
  $S$ cannot be an epimorphism.
\end{thm}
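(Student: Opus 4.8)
The plan is to argue by contradiction and combine the structural results of Sections~\ref{sec:epi-embeddings} and~\ref{sec:Vi}. Suppose $U\embedsin S$ is an epimorphism with $S$ stable, satisfying $\Cl J$-acc, and $U$ a proper subsemigroup in $\pv V_2$ or $\pv V_3$. Since $\pv V_2$ and $\pv V_3$ are dual to each other under reversal of the semigroup operation (and the hypotheses on $S$ are self-dual), it suffices to treat one of them, say $U\in\pv V_2$. First I would use $\Cl J$-acc on $S$ to pass to a $\Cl J$-maximal element $d$ of $S\setminus U$; let $D$ be its $\Cl D$-class in $S$. By Proposition~\ref{p:J-max-regular}, $d$ is regular, so $D$ is a regular $\Cl D$-class. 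The strategy is then to show that $U\cap D$ contains at least one element from every $\Cl R$-class and every $\Cl L$-class of $D$, so that Proposition~\ref{p:abundance-of-idempotents} applies and yields an idempotent in each such class; then Proposition~\ref{p:plenty-of-idempotents} produces a zigzag for $d$ entirely inside $D$, and finally Corollary~\ref{c:spuriousness} (whose hypothesis $U\cap D$ closed under multiplication I must also verify) gives a contradiction by forcing $d\in U$.

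Next I would extract the required combinatorial information from a minimum-length zigzag for $d$ over $U$. Write out~\eqref{eq:Isbell-zigzag}; by Observation~\ref{item:O1} none of the $x_i,y_i$ lies in $U$, and by $\Cl J$-maximality of $d$ together with the factorizations~\eqref{eq:Isbell-zigzag-alt} each $x_i,y_i\in D$ (any proper $\Cl J$-ascent would land in $U$, which is excluded since $U$ is a subsemigroup). The factorizations $d=x_i\cdot u_iy_{i-1}=x_i v_i\cdot y_i$ then force the factors $u_i,v_i$ (for the relevant indices) to sit in $R_{x_i}$-type and $L_{y_i}$-type positions in $D$; combined with Lemma~\ref{l:O3,4}\ref{item:O3} one sees that the spine elements, sandwiched appropriately between idempotents of $U$, can be taken to lie in $D$, and that as $i$ ranges over the zigzag these elements meet every $\Cl R$-class of the $x_i$ and every $\Cl L$-class of the $y_i$. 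Here is where the defining pseudoidentity of $\pv V_2$ enters through Lemma~\ref{l:avoiding-Y}: whenever $U$ contains idempotents $e,f$ and an element $u$ with $u\in R_e\cap L_f$ in $S$, that lemma gives $u^{\omega+1}=u$ and $\Cl D$-equivalence of $e,u,f$ in $U$, which is exactly what is needed to promote ``$U$ meets these $\Cl R$- and $\Cl L$-classes'' into ``$U\cap D$ is a regular subsemigroup meeting all of them,'' and in particular $U\cap D$ is closed under multiplication so that Corollary~\ref{c:spuriousness} is applicable.

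The crux, and the step I expect to be the main obstacle, is showing that $U$ actually meets \emph{every} $\Cl R$-class and \emph{every} $\Cl L$-class of $D$ — the zigzag a priori only guarantees $U$-elements in the $\Cl R$-classes of the $x_i$ and the $\Cl L$-classes of the $y_i$, and it is not immediate that these exhaust $D/\Cl R$ and $D/\Cl L$. To close this gap I would invoke Corollary~\ref{c:spuriousness} in its contrapositive form as a dichotomy: either $U$ meets every $\Cl R$-class and every $\Cl L$-class of $D$, or not; in the ``not'' case there is an $\Cl L$-class (or, dually, $\Cl R$-class) of $D$ disjoint from $U$, and after using Lemma~\ref{l:unique-minimum-D-class} to arrange that $S\setminus U$ is confined to $D\cup\{0\}$, Proposition~\ref{p:amalgam} directly contradicts the hypothesis that $U\embedsin S$ is an epimorphism. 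So the ``not'' branch is already handled, and in the ``meets everything'' branch Propositions~\ref{p:abundance-of-idempotents} and~\ref{p:plenty-of-idempotents} give a zigzag for $d$ inside $D$ over $U\cap D$; since $U\cap D$ is then a finite regular semigroup it is saturated by \cite[Theorem~1]{Hall&Jones:1983}, forcing $d\in U\cap D$, the desired contradiction. I would finish by remarking that the case $U\in\pv V_3$ follows by the left-right dual argument, using that $\pv V_3$ is the reversal of $\pv V_2$ and that stability and $\Cl J$-acc are preserved under reversal.
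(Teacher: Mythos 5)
Your proposal is correct and follows essentially the same route as the paper: reduce via Lemma~\ref{l:unique-minimum-D-class} so that $S\setminus U$ is confined to $D\cup\{0\}$, use Proposition~\ref{p:amalgam} (and its dual) to force $U$ to meet every $\Cl R$- and $\Cl L$-class of $D$, then apply Propositions~\ref{p:abundance-of-idempotents} and~\ref{p:plenty-of-idempotents} together with Lemma~\ref{l:avoiding-Y} to see that $U\cap(D\cup\{0\})$ is a finite regular semigroup dominating $d$, contradicting \cite[Theorem~1]{Hall\&Jones:1983}. The middle paragraph's detour through minimum-length zigzags and Corollary~\ref{c:spuriousness} is unnecessary (and you correctly abandon it), since the amalgam dichotomy in your final paragraph is exactly how the paper closes the argument.
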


\begin{proof}
  Suppose to the contrary that $U\embedsin S$ is an epimorphism, where
  $S$ is a stable semigroup with $\Cl J$-acc. By the $\Cl J$-acc
  assumption, there is some $\Cl J$-maximal element $d$ of~$S\setminus
  U$. Let $D$ be its $\Cl D$-class in~$S$, which is regular by
  Proposition~\ref{p:J-max-regular}, as $S$ is also assumed to be
  stable. By Lemma~\ref{l:unique-minimum-D-class} there exists an
  ideal $I$ of $S$ such that $U/(U\cap I)$ is a proper subsemigroup of
  $S/I$ and the embedding $U/(U\cap I)\embedsin S/I$ is an
  epimorphism, $I\cap D=\emptyset$, and $(S/I)\setminus \bigl(U/(U\cap
  I)\bigr)\subseteq D$. Clearly, $S/I$ is also a stable semigroup with
  $\Cl J$-acc and $U/(U\cap I)$ is a proper subsemigroup of $S/I$. It
  also follows that $U/(U\cap I)$ lies in the same pseudovariety $\pv
  V_i$ ($i\in\{2,3\}$) as~$U$ and the embedding $U/(U\cap I)\embedsin
  S/I$ is an epimorphism. Therefore, without loss of generality, by
  replacing $S$ by $S/I$ and $U$ by $U/(U\cap I)$ we can assume that
  $S\setminus U\subseteq D$. Thus, if $D$ is not a subsemigroup
  of~$S$, then $S$ has a zero, which belongs to~$U$.

  By Proposition~\ref{p:amalgam}, we know that $U$ meets every $\Cl
  R$-class and every $\Cl L$-class in~$D$. Then,
  Proposition~\ref{p:abundance-of-idempotents} yields that $D$
  contains at least one idempotent in each such class. In particular,
  for every element in~$U\cap D$, there are idempotents $e,f\in U\cap
  D$ such that $u=euf$. By Lemma~\ref{l:avoiding-Y}, we conclude that
  $u$ is a group element. Hence, $U\cap (D\cup\{0\})$ is a regular
  semigroup. On the other hand, by
  Proposition~\ref{p:plenty-of-idempotents}, all elements
  of~$S\setminus U$ admit zizags whose factors belong to~$D$. We may,
  therefore, assume that $S\setminus\{0\}=D$ and $U$ is a regular
  semigroup, which contradicts the fact that finite regular semigroups
  are saturated \cite[Theorem~1]{Hall&Jones:1983}.
\end{proof}

The following result is now immediate.

\begin{cor}
  \label{c:Vi-saturated}
  Each of the pseudovarieties $\pv V_i$ ($i=2,3$) is F-saturated, and
  hence F-epimorphically closed.
\end{cor}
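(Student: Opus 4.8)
The plan is to deduce this corollary directly from Theorem~\ref{t:Vi} together with the remarks already made in the introduction. The corollary asserts two things about each of $\pv V_2$ and $\pv V_3$: first that it is F-saturated, and second that it is F-epimorphically closed. The second assertion is not a consequence of Theorem~\ref{t:Vi} in the abstract sense, so I expect the corollary really intends F-epimorphically closed to follow from F-saturation via the observation recorded in Section~\ref{sec:intro} --- namely, that a saturated (here, F-saturated) class closed under taking homomorphic images has the property that every epimorphism with domain in the class is onto, and hence its image lies in the class. Since each $\pv V_i$ is a pseudovariety, it is in particular closed under homomorphic images, so F-saturation plus this observation yields F-epimorphic closure. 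Thus the real content is F-saturation.

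For F-saturation, fix $i\in\{2,3\}$ and let $U\in\pv V_i$; I must show $U$ is F-saturated, i.e., $\Dom(U,T)\ne T$ for every finite oversemigroup $T$ properly containing $U$. Suppose not: then $U$ is properly epimorphically embedded in some finite $T$. A finite semigroup is stable and, being finite, trivially satisfies the $\Cl J$-acc. Hence the pair $(T,U)$ satisfies all the hypotheses of Theorem~\ref{t:Vi} --- $T$ is a stable semigroup with $\Cl J$-acc, $U$ is a proper subsemigroup of $T$ lying in $\pv V_i$ with $i\in\{2,3\}$ --- so the conclusion of that theorem gives that the inclusion $U\embedsin T$ is \emph{not} an epimorphism, contradicting the assumption that $U$ is epimorphically embedded in $T$. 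Therefore no such $T$ exists, so $U$ is F-saturated; as $U\in\pv V_i$ was arbitrary, $\pv V_i$ is F-saturated.

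Finally, for F-epimorphic closure: let $\alpha\colon S\to T$ be an epimorphism with $S\in\pv V_i$ and $T$ finite. Factor $\alpha$ as the surjection $S\to S\alpha$ followed by the inclusion $S\alpha\embedsin T$; since $S\alpha$ is a homomorphic image of $S\in\pv V_i$, we have $S\alpha\in\pv V_i$, and the inclusion $S\alpha\embedsin T$ is an epimorphism (as noted in Section~\ref{sec:intro}, $\alpha$ is an epimorphism iff $S\alpha\embedsin T$ is). By the F-saturation just established, $S\alpha$ cannot be properly epimorphically embedded in the finite semigroup $T$, so $S\alpha = T$; hence $T = S\alpha\in\pv V_i$. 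Thus $\pv V_i$ is F-epimorphically closed. The only slightly delicate point --- and the one worth stating explicitly --- is the passage from ``no proper epimorphic embedding exists'' to ``every epimorphism to a finite target is onto''; this is exactly the factorization argument above, and it is routine once the definitions are unwound.
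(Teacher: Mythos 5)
Your proposal is correct and matches the paper's approach: the paper states this corollary as an immediate consequence of Theorem~\ref{t:Vi}, exactly because a finite semigroup is stable and trivially satisfies the $\Cl J$-acc, and the passage from F-saturation to F-epimorphic closure is the factorization-through-the-image observation from Section~\ref{sec:intro} that you spell out. Your write-up just makes explicit the details the paper leaves to the reader.
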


\section{F-epimorphically closed pseudovarieties}
\label{sec:epiclosed}

We started from the observation that there is an epimorphism from $Y$
to a semigroup outside $\pv{DS}$. The natural question is just how far
we are able to go iterating the operator $\V{}:\Cl C\mapsto\V{\Cl C}$,
of taking the pseudovariety generated by a class $\Cl C$ of finite
semigroups, and the operator $\mathrm{Epi}$, that assigns to~$\Cl C$
the class of all finite semigroups $S$ for which there exists an
epimorphism $\varphi:U\in S$ with $U\in\Cl C$.

Our main results of this section imply that
$\V\Epi\V\Epi\V\{Y\}=\pv{S}$, thereby showing that $Y$ is very
powerful in terms of epimorphisms, which in a sense means that it is
very badly behaved. To achieve our goal, there are two steps:
\begin{enumerate}
\item to prove that every finite completely 0-simple semigroup $S$
  embeds in another finite completely 0-simple semigroup $R$ which in
  turn has an epimorphically embedded semigroup $U$ from~$\V\{Y\}$;
\item to show that every finite semigroup $S$ embeds in another finite
  semigroup $R$ which in turn has an epimorphically embedded semigroup
  $U$ which embeds in a finite completely 0-simple semigroup.
\end{enumerate}
Note that, in fact these two step establish that
$\Sub\Epi\Sub\Epi\V\{Y\}=\pv{S}$, where $\Sub\Cl C$ consists of all
subsemigroups of members of~$\Cl C$.

\subsection{Reaching completely 0-simple semigroups}
\label{sec:CS0}

In this subsection, we achieve the first step of the above-sketched
program.

\begin{thm}
  \label{t:from-Y-to-CS0}
  Every finite completely 0-simple semigroup $S$ can be embedded in a
  finite completely 0-simple semigroup $T$ which in turn has an
  epimorphically embedded subsemigroup $U$ which belongs to the
  pseudovariety $\V\{Y\}$.
\end{thm}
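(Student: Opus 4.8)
The plan is to realize the given finite completely $0$-simple semigroup $S$ inside a larger completely $0$-simple semigroup $T$ by ``fattening'' the structure matrix so that there is room to run an Isbell zigzag of the same shape as the $B_2$ example. Write $S\cong\mathcal{M}^0(G;I,\Lambda;P)$ as a Rees matrix semigroup over a group $G$ with sandwich matrix $P$. The $B_2$-pattern tells us exactly what we need: two $\mathcal{R}$-classes and two $\mathcal{L}$-classes whose overlap contains the idempotents playing the roles of $ab$, $a$, $ba$, together with one ``extra'' element $b$ to be dominated. So first I would enlarge the index sets, adjoining a new row $0'$ and a new column $0''$ (or, if $|I|,|\Lambda|$ are already small, simply pass to $\mathcal{M}^0(G;I\cup I,\Lambda\cup\Lambda;P')$ for a suitably chosen $P'$), and define $P'$ on the new entries so that the new $\mathcal{H}$-classes at the relevant positions are groups (entries invertible in $G$) while others are $0$, in such a way that $S$ embeds as the Rees matrix semigroup on the old indices and the new part supports a copy of $B_2$ (whose non-trivial elements realize the $a,b,ab,ba$ pattern over $G$). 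The point of using $\V\{Y\}$ rather than $Y$ itself is that $\V\{Y\}$ is closed under direct products and subsemigroups, giving enough flexibility to take $U$ to be a subsemigroup of $S\times B_2$ (or a similar product construction) sitting inside $T$.

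Concretely, the cleanest route is probably: let $T=\mathcal{M}^0(G;I\uplus\{p\},\Lambda\uplus\{q\};P')$ where the new row/column entries of $P'$ are chosen to be identities of $G$ at the four ``corner'' positions $(q,p)$, $(q,i_0)$, $(\lambda_0,p)$ for some fixed $i_0\in I$, $\lambda_0\in\Lambda$ with $p_{\lambda_0 i_0}$ invertible, and $0$ elsewhere on the new indices, so that $T\in\pv{CS_0}$ (one checks regularity of every non-zero $\mathcal{H}$-class that needs it, and completely $0$-simplicity is automatic for a Rees matrix semigroup over a group with regular sandwich matrix). Inside $T$ we have the old copy of $S$, the idempotents $e=(i_0,1,q)$-type element, and an element $b$-analogue $(p,1,q)$ lying in the new row and new column, with $(p,1,\lambda_0)$, $(i_0,1,q)$, $(p,1,q)$ playing the roles of $a$, $ab$, $ba$ from the $B_2$ presentation. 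Let $U$ be the subsemigroup of $T$ generated by (a copy of) $S$ together with the idempotents $e$, $f$ and the element $v$ corresponding to $a$ — but crucially \emph{not} the element corresponding to $b$; the zigzag exhibited in the $B_2$ example, transported along this correspondence, then shows $b\in\Dom(U,T)$, and iterating over all elements of $T\setminus U$ (they are all of the same ``missing'' form, up to the $G$-coordinate, which is absorbed using invertibility) gives $\Dom(U,T)=T$. Finally one must verify $U\in\V\{Y\}$: here $U$ is built from $S$ and a $B_2$-like piece, and since $Y=B_2\setminus\{b\}$ generates a pseudovariety containing $B_2$ itself? — no, it does not; so $U$ must be taken as a subsemigroup of a product of copies of $Y$, and the verification that the concrete $U$ we get embeds in such a product is the crux.

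That last verification is the main obstacle. The delicate point is that $\V\{Y\}$ does \emph{not} contain $B_2$ (indeed $Y$ is a maximal pseudovariety-generator avoiding $B_2$ is false, but $B_2\notin\V\{Y\}$ since $Y$ satisfies pseudoidentities failing in $B_2$), so the dominating subsemigroup $U$ cannot be allowed to contain a copy of $B_2$ — it must contain $S$, the idempotents, and the ``$a$'' element, but its closure under multiplication must avoid generating $b$. I would check this by computing the relevant products in $T$ and confirming that $S\cup\{e,f,v\}$ together with $0$ is already closed (the products $ve=0$-type relations, mirroring $a^2=0$ in $B_2$, are what make this work), so that $U=S\cup\{e,f,v,0\}$ as a set, and then exhibit an explicit embedding of $U$ into a finite product $S'\times Y^k$ for a suitable $S'\in\V\{Y\}$ — the separating morphisms being, on one hand, a retraction onto $S$, and on the other, coordinate maps collapsing $S$ and detecting the $Y$-pattern. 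The bookkeeping needed to confirm closedness of $U$ and to produce these separating homomorphisms is where the real work lies; everything else is a transcription of the $B_2$ example into Rees coordinates.
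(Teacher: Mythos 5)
There is a fatal obstruction in your plan: you insist that the dominating subsemigroup $U$ contain (a copy of) $S$, and simultaneously that $U\in\V\{Y\}$. These two requirements are incompatible for a general completely $0$-simple $S$. The pseudovariety $\V\{Y\}$ satisfies $x^3=x^2$ (so all its members are aperiodic) and also $xyx=x^2y^2$, which fails in $B_2$; hence no member of $\V\{Y\}$ can contain a nontrivial subgroup, nor a copy of~$B_2$. But $S=\Cl M^0(I,G,\Lambda,P)$ contains copies of $G$, and may itself be $B_2$ --- the very first case the theorem must handle. Your fallback of embedding $U$ into $S'\times Y^k$ cannot rescue this: membership in $\V\{Y\}$ is an intrinsic property of $U$, and once $U\supseteq S$ with $S\notin\V\{Y\}$, no such embedding exists. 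The same objection applies to the variant where $U=S\cup\{e,f,v,0\}$ as a set.

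The missing idea is that $U$ need not --- and must not --- contain $S$: since $\Dom(U,T)$ is a subsemigroup of $T$, it suffices that $U$ directly dominate enough of $T\setminus S$ that every element of $S$ factors as a product of already-dominated elements. The paper's construction adjoins one new column $\lambda_0$ and a whole block of new rows $I''$ in bijection (via $\varphi$) with $\Lambda\uplus\{\lambda_0\}$, the new sandwich entries forming an identity ``staircase''; $U$ is the part of the new block on or above the diagonal of that staircase, together with the row $I\times G\times\{\lambda_0\}$ and $0$, so that $U\cap S=\{0\}$. Each new element strictly below the diagonal admits a three-step zigzag over $U$ (the transported $B_2$ pattern you anticipated), and every $(i,g,\lambda)\in S$ is then recovered as $(i,g,\lambda_0)\,(\varphi(\lambda_0),1,\lambda)$, a product of two elements of $T\setminus S$ lying in the subsemigroup $\Dom(U,T)$. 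In this $U$ the square of every non-idempotent and the product of any two distinct idempotents equal $0$, so the basis $x^3=x^2$, $xyx=x^2y^2=y^2x^2$ of $\V\{Y\}$ is satisfied. Adjoining only one new row and one new column, as you propose, does not leave room for this factorization argument, and in any case the verification $U\in\V\{Y\}$ --- which you correctly identify as the crux --- cannot be completed for your choice of $U$.
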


\begin{proof}
  By Rees' theorem (cf.~\cite[Theorem~3.5]{Clifford&Preston:1961}), we
  may assume that $S$ is a Rees matrix semigroup
  \begin{displaymath}
    S=\Cl M^0(I,G,\Lambda,P)
  \end{displaymath}
  where $I$ and $\Lambda$ are finite sets, $G$ is a finite group with
  identity element 1, and $P:\Lambda\times I\to G\uplus\{0\}$ is a
  matrix with at least one nonzero entry in each row and each column;
  we let $p_{\lambda i}=P(\lambda,i)$. The underlying set of~$S$ is
  $I\times G\times\Lambda\uplus\{0\}$ and the multiplication is
  defined by letting $0$ be the absorbing element and
  \begin{displaymath}
    (i,g,\lambda)\,(j,h,\mu)=
    \begin{cases}
      (i,gp_{\lambda i}h,\mu)&\text{ if } p_{\lambda i}\ne0\\
      0&\text{ otherwise.}
    \end{cases}
  \end{displaymath}

  Let $\Lambda'=\Lambda\uplus\{\lambda_0\}$, $\varphi:\Lambda'\to I''$
  be a bijection with a set $I''$ disjoint from $I$, and $I'=I\cup
  I''$. We choose a total order $\le$ on $I''$ for which
  $\varphi(\lambda_0)$ is maximum and extend $P$ to a function
  $P':\Lambda'\times I'\to G\uplus\{0\}$ by letting
  \begin{displaymath}
    P'(\lambda',i')=
    \begin{cases}
      P(\lambda',i')
      &\text{if } \lambda'\in\Lambda \text{ and } i'\in I\\
      1
      &\text{if } \varphi(\lambda')=i'\\
      0
      &\text{otherwise.}
    \end{cases}
  \end{displaymath}
  Consider the following Rees matrix semigroup:
  \begin{displaymath}
    T=\Cl M^0(I',G,\Lambda',P')
  \end{displaymath}
  and its subset
  \begin{align*}
    U
    &=\{(i'',g,\lambda')\in I''\times G\times\Lambda':
      i''<\varphi(\lambda')\} \\
    &\quad\cup\{(\varphi(\lambda'),1,\lambda'): \lambda'\in\Lambda'\}
    \cup I\times G\times\{\lambda_0\}
    \cup\{0\}.
  \end{align*}
  The sketch of the ``eggbox'' picture of the nonzero $\Cl D$-class of
  $T$ in Figure~\ref{fig:CS0} may help to verify the remainder of the
  proof. The small squares in the grid stand for the $\Cl H$-classes.
  The dark gray region represents the set of nonzero elements of~$S$.
  The stars represent the idempotents in~$T\setminus S$ which,
  together with the light gray squares, make up the set
  $U\setminus\{0\}$. There are also idempotents in $S\setminus\{0\}$,
  but they are not explicitly represented.
  \begin{figure}[ht]
    \centering
    \begin{tikzpicture}[on grid,auto]
      \draw (0,0) rectangle (3,5);
      \fill [gray!50] (2.5,3) rectangle (3,5);
      \fill [gray!50] (0.5,2.5) rectangle (3,3);
      \fill [gray!50] (1,2) rectangle (3,2.5);
      \fill [gray!50] (1.5,1.5) rectangle (3,2);
      \fill [gray!50] (2,1) rectangle (3,1.5);
      \fill [gray!50] (2.5,0.5) rectangle (3,1);
      \fill [gray!80] (0,3) rectangle (2.5,5);
      \draw [step=0.5] (0,0) grid (3,5);
      \draw (1.3,4) node {$S\setminus\{0\}$};
      \draw (2.2,2.2) node {$U\setminus\{0\}$};
      \draw (0.25,2.75) node [color=gray!70] {$*$};
      \draw (0.75,2.25) node [color=gray!70] {$*$};
      \draw (1.25,1.75) node [color=gray!70] {$*$};
      \draw (1.75,1.25) node [color=gray!70] {$*$};
      \draw (2.25,0.75) node [color=gray!70] {$*$};
      \draw (2.75,0.25) node [color=gray!70] {$*$};
      \draw [thick,decorate,decoration = {calligraphic brace}]
           (-0.1,0) --  (-0.1,2.95);
           \draw (-0.5,1.5) node {$I''$};
      \draw [thick,decorate,decoration = {calligraphic brace}]
           (-0.1,3.05) --  (-0.1,5);
           \draw (-0.5,4) node {$I$};
      \draw [thick,decorate,decoration = {calligraphic brace}]
           (0,5.1) --  (2.45,5.1);
           \draw (1.25,5.5) node {$\Lambda$};
      \draw [thick,decorate,decoration = {calligraphic brace}]
           (2.55,5.1) --  (3,5.1);
           \draw (2.75,5.5) node {$\lambda_0$};
    \end{tikzpicture}
    \caption{A sketch of the eggbox picture of the nonzero $\Cl
      D$-class of the semigroup $T$ in the proof of
      Theorem~\ref{t:from-Y-to-CS0}.}
    \label{fig:CS0}
  \end{figure}
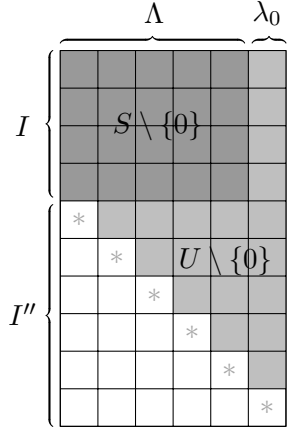
  
  Note that both $S$ and $U$ are subsemigroups of~$T$. We claim that
  $U$ is epimorphically embedded in~$T$. Consider first an element
  $t=(i',g,\lambda')\in T\setminus (S\cup U)$ with
  $i'>\varphi(\lambda')$. The following factorizations describe a
  zigzag for $t$ over~$U$, where we underline the factor from $U$:
  \begin{align*}
    t
    &=(i',g,\lambda')\,
      \underline{(\varphi(\lambda'),1,\lambda')}\\
    &=(i',g,\lambda')\,
      \underline{(\varphi(\lambda'),g^{-1},\varphi^{-1}(i'))}\,
      (i',g,\lambda')\\
    &=\underline{(i',1,\varphi^{-1}(i'))}\,
      (i',g,\lambda').
  \end{align*}
  Since $U$ is contained in the set~$\Dom(U,T)$, which is a
  subsemgroup of~$T$, it follows that $T\setminus S$ is contained
  in~$\Dom(U,T)$. The claim is now a consequence of the following
  factorization for an arbitrary $(i,g,\lambda)\in S$ as a product of
  two elements of $T\setminus S$:
  \begin{displaymath}
    (i,g,\lambda)=(i,g,\lambda_0)\,(\varphi(\lambda_0),1,\lambda).
  \end{displaymath}

  To complete the proof of the theorem, it remains to show that $U$
  belongs to the pseudovariety $\V\{Y\}$. For this purpose, we recall
  the basis of pseudoidentities for $\V\{Y\}$ given by
  \cite[Corollary~6.5.9]{Almeida:1994a}:
  \begin{displaymath}
    \V\{Y\}=\op x^3=x^2,\ xyx=x^2y^2=y^2x^2\cl.
  \end{displaymath}
  Now, in $U$, the square of every non idempotent element is~$0$ and
  so is the product of any two distinct idempotents. Moreover, if
  $x,y\in U$, then $xyx\ne0$ implies that $x$ is idempotent and $y=x$.
  Hence, $U$ belongs to $\V\{Y\}$.
\end{proof}

\subsection{Reaching all finite semigroups}
\label{sec:S}

We denote by $PT_Q$ the monoid of all partial transformations of the
set $Q$, which are applied and composed on the right. The submonoid
consisting of all transformations with domain $Q$ is denoted $T_Q$. In
case $Q$ is the set $[n]=\{1,\ldots,n\}$ for a positive integer $n$,
then we also write $PT_n$ instead of $PT_Q$ and $T_n$ instead $T_Q$.

By a \emph{(finite) semiautomaton}, we mean a triple $\Cl
A=(Q,A,\delta)$ where $Q$ is a finite set of \emph{states}, $A$~is a
finite set of \emph{letters}, and $\delta$ is a function $A\to PT_Q$.
We say that the letter $a$ \textsl{acts} on the state $q$ if $q$
belongs to the domain of $\delta(a)$. In case $\delta$ takes its
values in $T_Q$, we say that the semiautomaton $\Cl A$ is
\emph{complete}. We will abuse notation and also denote by $\delta$
its unique extension to a semigroup homomorphism from the free
semigroup $A^+$ on $A$ to $PT_Q$. The image $T(\Cl A)$ of this
homomorphism is called the \emph{transition semigroup of~$\Cl A$}.
A semiautomaton $\Cl A=(Q,A,\delta)$ may be viewed as an $A$-labeled
directed graph, where the vertices are the states and there is an edge
$p\xrightarrow{a}q$ whenever $p\delta(a)=q$.

By taking $\Cl A=(S^1,A,\delta)$ where $A$ is a generating subset of a
given semigroup $S$ and, for $a\in A$ and $s\in S^1$, $s\delta(a)=sa$,
we get a complete semiautomaton; as a labeled directed graph, this is
precisely the Cayley graph of $S$ with respect to $A$ and gives the
Cayley representation theorem for semigroups as $S$ is isomorphic with
$T(\Cl A)$ since, for the unique extension of the inclusion mapping
$A\embedsin S$ to a homomorphism $\varphi:A^+\to S$, it is easy to see
that the mappings $\varphi$ and $\delta$ have the same kernel.

Now, suppose that $\Cl A=(Q,A,\delta)$ is a semiautomaton. We define
an \emph{enlargement} $\enl A=(\tilde{Q},\tilde{A}.\tilde{\delta})$ as
follows. First, we let $a\mapsto a'$ be a bijection of $A$ with a set
$A'$ disjoint from~$A$ and put $\tilde{A}=A\cup A'$. We also consider for
each $a\in A$ a bijection $q\mapsto q_a$ of $Q$ with a set $Q_a$ such
that $Q$ and the $Q_a$ are pairwise disjoint; we let
$\tilde{Q}=Q\cup\bigcup_{a\in A}Q_a$. Finally let $\tilde{\delta}$
extend $\delta$ as follows:
\begin{itemize}
\item for each $a\in A$, $\tilde{\delta}(a)$ extends the partial
  function $\delta(a)$ by adding the set $Q_a$ to the domain and
  letting $q_a\tilde{\delta}(a)=q$;
\item for each $a'\in A'$, the domain of the function
  $\tilde{\delta}(a')$ is $Q$ and $q\tilde{\delta}(a')=q_a$.
\end{itemize}
See Figure~\ref{fig:R4FA} for an example of enlargement, where the
starting semiautomaton is given by the action of the cycle
$(1\,2\,3\,4)$ ($a$), the transposition $(1\,2)$ ($b$), and a rank~3
idempotent ($c$).

\begin{figure}[ht]
  \begin{center}
  \scalebox{0.7}{
  \begin{tikzpicture}[->,>={Stealth[round]},
                      shorten >=1pt,node distance=3cm,on grid,auto]
    \node[state,line width=1.5pt] (1) {$1$};
    \node[state,line width=1.5pt] (2) [above right=of 1] {$2$};
    \node[state,line width=1.5pt] (3) [below right=of 2] {$3$};
    \node[state,line width=1.5pt] (4) [below right=of 1] {$4$};
    \node[state] (1a) [below left=of 1] {$1_a$};
    \node[state] (1b) [left=of 1] {$1_b$};
    \node[state] (1c) [above left=of 1] {$1_c$};
    \node[state] (2a) [above left=of 2] {$2_a$};
    \node[state] (2b) [above=of 2] {$2_b$};
    \node[state] (2c) [above right=of 2] {$2_c$};
    \node[state] (3a) [above right=of 3] {$3_a$};
    \node[state] (3b) [right=of 3] {$3_b$};
    \node[state] (3c) [below right=of 3] {$3_c$};
    \node[state] (4a) [below right=of 4] {$4_a$};
    \node[state] (4b) [below=of 4] {$4_b$};
    \node[state] (4c) [below left=of 4] {$4_c$};
    \path[->] (1) edge [line width=1.5pt] node {$a,b,c$} (2)
    (2) edge [bend left,line width=1.5pt] node {$b$} (1)
    (2) edge  [line width=1.5pt] node {$a$} (3)
    edge [loop right,line width=1.5pt] node {$c$} (2)
    (3) edge  [line width=1.5pt] node {$a$} (4)
    edge [loop left,line width=1.5pt] node {$b,c$} (3)
    (4) edge [line width=1.5pt] node {$a$} (1)
    edge [loop above,line width=1.5pt] node {$b,c$} (4);
    \path[->]
    (1) edge [bend left=8] node[pos=0.65] {$a'$} (1a)
    (1a) edge [bend left=8] node[pos=0.35] {$a$} (1)
    (1) edge [bend left=8] node[pos=0.65] {$b'$} (1b)
    (1b) edge [bend left=8] node[pos=0.35] {$b$} (1)
    (1) edge [bend left=8] node[pos=0.65] {$c'$} (1c)
    (1c) edge [bend left=8] node[pos=0.35] {$c$} (1)
    (2) edge [bend left=8] node[pos=0.65] {$a'$} (2a)
    (2a) edge [bend left=8] node[pos=0.35] {$a$} (2)
    (2) edge [bend left=8] node[pos=0.65] {$b'$} (2b)
    (2b) edge [bend left=8] node[pos=0.35] {$b$} (2)
    (2) edge [bend left=8] node[pos=0.65] {$c'$} (2c)
    (2c) edge [bend left=8] node[pos=0.35] {$c$} (2)
    (3) edge [bend left=8] node[pos=0.65] {$a'$} (3a)
    (3a) edge [bend left=8] node[pos=0.35] {$a$} (3)
    (3) edge [bend left=8] node[pos=0.65] {$b'$} (3b)
    (3b) edge [bend left=8] node[pos=0.35] {$b$} (3)
    (3) edge [bend left=8] node[pos=0.65] {$c'$} (3c)
    (3c) edge [bend left=8] node[pos=0.35] {$c$} (3)
    (4) edge [bend left=8] node[pos=0.65] {$a'$} (4a)
    (4a) edge [bend left=8] node[pos=0.35] {$a$} (4)
    (4) edge [bend left=8] node[pos=0.65] {$b'$} (4b)
    (4b) edge [bend left=8] node[pos=0.35] {$b$} (4)
    (4) edge [bend left=8] node[pos=0.65] {$c'$} (4c)
    (4c) edge [bend left=8] node[pos=0.35] {$c$} (4);
  \end{tikzpicture}
  }
  \end{center}
  \caption{The automaton $\enl A$, where $\Cl A$ is drawn in thick
    lines and $T(\Cl A)=T_4$.}
  \label{fig:R4FA}
\end{figure}

The following result presents some basic observations about this
construction.

\begin{prop}
  \label{p:S-in-R}
  Let $\Cl A=(Q,A,\delta)$ be a semiautomaton and let $\enl
  A=(\tilde{Q},\tilde{A},\tilde{\delta})$ be its enlargement. Then the
  following hold:
  \begin{enumerate}[label=(\arabic*)]
  \item\label{item:S-in-R-1} The subsemigroup of $T(\enl A)$ generated by the set
    \begin{displaymath}
      B=\{\tilde{\delta}(a'a^2):a\in A\}
    \end{displaymath}
    is isomorphic with $T(\Cl A)$.
  \item\label{item:S-in-R-2} The following equalities hold for each
    $a\in A$:
    \begin{displaymath}
      \tilde{\delta}(aa'a)=\tilde{\delta}(a),\quad
      \tilde{\delta}(a'aa')=\tilde{\delta}(a').
    \end{displaymath}
  \item\label{item:S-in-R-3} If $\Cl A$ is complete and $A_\mu$ is the
    set of all letters in $A$ which move at least one state, then the
    subsemigroup $U(\enl A)$ of $T(\enl A)$ generated by the set
    \begin{displaymath}
      C=\{\tilde{\delta}(a'),\tilde{\delta}(aa'),\tilde{\delta}(a'a):a\in A\}
    \end{displaymath}
    has order $2|A|+|A_\mu|+2$.
  \item\label{item:S-in-R-4} In case $\Cl A$ is complete, the
    structure, up to isomorphism, of $U(\enl A)$ depends only on
    $|Q|$, $|A|$ and $|A_\mu|$.
  \item\label{item:S-in-R-5} Every nonzero element of $U(\enl A)$ is
    $\Cl J$-equivalent (in $T(\enl A)$) to an element of the form
    $\tilde{\delta}(w)$ for some word $w\in A^+$ of length at most~2.
  \item\label{item:S-in-R-6} Every nonzero element of $T(\enl A)$ is
    $\Cl J$-equivalent to a product of elements of~$B$ or to the
    identity mapping $1_Q$ on the set~$Q$.
  \item\label{item:S-in-R-7} The inclusion $U(\enl A)\embedsin T(\enl
    A)$ is an epimorphism.
  \end{enumerate}
\end{prop}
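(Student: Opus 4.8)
The plan is to deduce \ref{item:S-in-R-1}--\ref{item:S-in-R-7} from direct computations with partial transformations of~$\tilde Q$, the collapsing identities of~\ref{item:S-in-R-2} serving as the engine behind all the rest. Three basic elements of~$T(\enl A)$ get used constantly: $r_a:=\tilde\delta(a')$ maps $Q$ bijectively onto~$Q_a$; $1_Q:=\tilde\delta(a'a)$ is, for \emph{every}~$a\in A$, the partial identity of~$\tilde Q$ with domain~$Q$ (since $Q_a$ always lies in the domain of~$\tilde\delta(a)$); and $s_a:=\tilde\delta(aa')$ has domain $Q\cup Q_a$, fixes $Q_a$ pointwise, and sends $q\mapsto(q\delta(a))_a$ on~$Q$ (completeness being used here when it is in force). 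For~\ref{item:S-in-R-1}, tracing a point of~$\tilde Q$ through the letters of $a_1'a_1^2\cdots a_n'a_n^2$ shows that $\tilde\delta(a_1'a_1^2\cdots a_n'a_n^2)$ has domain inside~$Q$ and agrees there with $\delta(a_1\cdots a_n)$; hence $\delta(a_1\cdots a_n)\mapsto\tilde\delta(a_1'a_1^2\cdots a_n'a_n^2)$ is a well-defined homomorphism from $T(\Cl A)$ onto the subsemigroup $\langle B\rangle$ of~$T(\enl A)$, and it is injective because its value determines the image of the word under $\delta$ by restriction to~$Q$. Part~\ref{item:S-in-R-2} is then a direct check of each of the two words on the (at most three types of) points of~$\tilde Q$.

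For~\ref{item:S-in-R-3} and~\ref{item:S-in-R-4} I compute the multiplication in $U(\enl A)=\langle C\rangle$, writing $t_a:=1_Q\,s_a=\tilde\delta(a'a^2a')$. By~\ref{item:S-in-R-2}, $r_as_a=r_a$, $s_a^2=s_a$, $t_as_a=t_a$, $1_Qr_a=r_a$, $1_Qs_a=t_a$, $1_Qt_a=t_a$ and $1_Q^2=1_Q$, while every remaining product of two of these elements is the empty map~$0$: indeed $r_a,s_a,t_a$ all have image inside~$Q_a$ and, among the domains of our elements, only that of~$s_a$ (namely $Q\cup Q_a$) meets~$Q_a$. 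Thus $U(\enl A)=\{0,1_Q\}\cup\{r_a,s_a:a\in A\}\cup\{t_a:a\in A\}$ is closed under multiplication, and a comparison of domains and images shows that the only coincidences among the listed elements are $t_a=r_a$, happening exactly when $\delta(a)=1_Q$, i.e.\ $a\notin A_\mu$; hence $|U(\enl A)|=2|A|+|A_\mu|+2$. As the multiplication table just written depends only on the set~$A$ together with the subset~$A_\mu$ (the only place $\delta$ intervenes is through whether $t_a=r_a$), $U(\enl A)$ is, up to isomorphism, determined by $(|A|,|A_\mu|)$, a fortiori by $(|Q|,|A|,|A_\mu|)$, which is~\ref{item:S-in-R-4}. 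Finally,~\ref{item:S-in-R-5} is squeezed from~\ref{item:S-in-R-2}: the factorizations $1_Q=\tilde\delta(a')\tilde\delta(a)$, $\tilde\delta(a)=\tilde\delta(a)\,1_Q$, $r_a=1_Q\,r_a$, $1_Q=r_a\,\tilde\delta(a)$, $s_a=\tilde\delta(a)\tilde\delta(a')$ and $\tilde\delta(a)=s_a\,\tilde\delta(a)$ exhibit $1_Q,r_a,s_a,\tilde\delta(a)$ as pairwise mutual two-sided divisors, so all four are $\Cl J$-equivalent to the length-one word~$\tilde\delta(a)$; and $t_a=\tilde\delta(a')\tilde\delta(aa)\tilde\delta(a')$ together with $\tilde\delta(aa)=\tilde\delta(a)\,t_a\,\tilde\delta(a)$ (the latter since $aa'a^2a'a=(aa'a)(aa'a)$ collapses to $aa$ by~\ref{item:S-in-R-2}) gives $t_a\mathrel{\Cl J}\tilde\delta(aa)$.

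Part~\ref{item:S-in-R-6} is the technical heart. Fix $w\in\tilde A^+$ with $\tilde\delta(w)\ne0$. Since $\tilde\delta(a'b)=0$ whenever $b\ne a$ and $\tilde\delta(a'b')=0$ always, in~$w$ each primed letter~$a'$ is either final or immediately followed by~$a$, and two primed letters are never adjacent; in particular $w$ ends in at most one primed letter. Peel it off if present: if $w=w_0a'$ with $w_0\ne\varepsilon$, then $w_0$ ends in an unprimed letter, so $\tilde\delta(w_0)$ has image inside~$Q$ and $\tilde\delta(w_0a')\,\tilde\delta(a)=\tilde\delta(w_0)\,1_Q=\tilde\delta(w_0)$, whence $\tilde\delta(w_0a')\mathrel{\Cl R}\tilde\delta(w_0)$; while if $w=a'$ then $\tilde\delta(w)=r_a\mathrel{\Cl J}1_Q$. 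We may thus assume $w$ ends unprimed. Now repeatedly remove the last (hence internal) primed letter by means of $\tilde\delta(w_1a'aw_2)=\tilde\delta(w_1)\,1_Q\,\tilde\delta(w_2)$, where $w_2$ contains no primed letters: if $w_1\ne\varepsilon$ it ends unprimed, so this equals $\tilde\delta(w_1w_2)$, with one fewer primed letter; if $w_1=\varepsilon$ it equals $1_Q\,\tilde\delta(w_2)$, which is $1_Q$ or a product of elements of~$B$. Iterating, $\tilde\delta(w)$ is $\Cl J$-equivalent to $1_Q$, to a product of elements of~$B$, or to $\tilde\delta(u)$ for some $u\in A^+$; writing $u=a_1u'$, one has $\tilde\delta(u)\mathrel{\Cl L}\tilde\delta(a_1')\tilde\delta(u)$ (the reverse divisibility from $a_1a_1'a_1=a_1$), and $\tilde\delta(a_1')\tilde\delta(u)$ is $1_Q$ or a product of elements of~$B$. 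Keeping track of the interaction of primed and unprimed letters through these reductions is the step I expect to demand the most care.

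Finally,~\ref{item:S-in-R-7}: the subsemigroup $U(\enl A)$ is proper in~$T(\enl A)$ by the description in~\ref{item:S-in-R-3} (for instance $\tilde\delta(a)\notin U(\enl A)$, since it has domain $Q\cup Q_a$ and image~$Q$, unlike any member of~$U(\enl A)$), the generator $\tilde\delta(a')=r_a$ already lies in it, and for each $a\in A$ the identities of~\ref{item:S-in-R-2} yield the length-$2$ zigzag
\begin{align*}
  \tilde\delta(a)&=\tilde\delta(a)\,\tilde\delta(a'a),
  & \tilde\delta(a'a)&=\tilde\delta(a')\,\tilde\delta(a)\\
  \tilde\delta(a)\,\tilde\delta(a')&=\tilde\delta(aa'),
  & \tilde\delta(aa')\,\tilde\delta(a)&=\tilde\delta(a)
\end{align*}
for $d=\tilde\delta(a)$ over~$U(\enl A)$, with spine $\bigl(\tilde\delta(a'a),\tilde\delta(a'),\tilde\delta(aa')\bigr)$ lying entirely in~$U(\enl A)$; this is exactly the shape of the zigzag for $b$ over~$Y$ in~$B_2$ recalled in the Introduction. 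By Theorem~\ref{t:zigzag}, $\tilde\delta(a)\in\Dom\bigl(U(\enl A),T(\enl A)\bigr)$ for every~$a$, so the dominion $\Dom\bigl(U(\enl A),T(\enl A)\bigr)$, being a subsemigroup that contains all the generators $\tilde\delta(a),\tilde\delta(a')$ of~$T(\enl A)$, is all of~$T(\enl A)$; that is, the inclusion $U(\enl A)\embedsin T(\enl A)$ is an epimorphism.
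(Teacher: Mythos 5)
Your proof is correct and takes essentially the same route as the paper's: both rest on the explicit description of $1_Q=\tilde\delta(a'a)$, $\tilde\delta(a')$ and $\tilde\delta(aa')$ as partial transformations, the resulting multiplication table for $U(\enl A)$, the deletion of internal $a'a$ factors to prove \ref{item:S-in-R-6}, and the same length-$2$ zigzag $\tilde\delta(a)=\tilde\delta(a)\tilde\delta(a'a)=\tilde\delta(a)\tilde\delta(a')\tilde\delta(a)=\tilde\delta(aa')\tilde\delta(a)$ for \ref{item:S-in-R-7}. The only divergences are cosmetic (you peel reducible factors off from the right and prepend $a_1'$ at the end, where the paper first normalizes $w$ to start primed and end unprimed), and your treatment of \ref{item:S-in-R-5}, in particular the $\Cl J$-equivalence $t_a\mathrel{\Cl J}\tilde\delta(aa)$, is actually more explicit than the paper's.
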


\begin{proof}
  \ref{item:S-in-R-1} Let $a\in A$. Because $a'$ only acts on the
  states $q\in Q$, we see that $\tilde{\delta}(a'a)=1_Q$. Hence, each mapping
  $\tilde{\delta}(a'a^2)$ is the restriction of $\tilde{\delta}(a)$
  to~$Q$ which, by definition of $\tilde{\delta}$, coincides with
  $\delta(a)$, from which the conclusion of \ref{item:S-in-R-1}
  follows.

  \ref{item:S-in-R-2} Both equalities follow from
  $\tilde{\delta}(a'a)=1_Q$, respectively as the image of
  $\tilde{\delta}(a)$ is contained in~$Q$ and the domain of
  $\tilde{\delta}(a')$ is~$Q$.

  \ref{item:S-in-R-3} We observed above that all elements of~$C$ of
  the form $\tilde{\delta}(a'a)$ are equal to $1_Q$. Similarly,
  $\tilde{\delta}(aa')$ is~$1_{Q_a}$ and the image of both
  transformations $\tilde{\delta}(a')$ and $\tilde{\delta}(aa')$ is
  the set $Q_a$. As the only member of $C$ whose domain intersects
  $Q_a$ nontrivially is $\tilde{\delta}(aa')$, we conclude that any
  nonzero product of members of~$C$ which contains one of the factors
  $\tilde{\delta}(a')$ or $\tilde{\delta}(aa')$ must be such that all
  factors after it must be $\tilde{\delta}(aa')$ which, by
  \ref{item:S-in-R-2}, can be dropped without affecting the product.
  Thus, besides the $2|A|+1$ distinct generators and $0$, $U(\enl A)$
  only contains the elements
  $\tilde{\delta}(a'abb')=1_Q\tilde{\delta}(bb')$ with $b\in A$, that
  is, the restriction of $\tilde{\delta}(bb')$ to~$Q$. If the letter
  $b\in A$ acts as the identity on~$Q$, then
  $\tilde{\delta}(a'abb')=\tilde{\delta}(b')$ and we do not get a new
  element in the semigroup $U(\enl A)$. Otherwise,
  $p\tilde{\delta}(a'abb')=q_b$ for some distinct states $p,q\in Q$,
  and we do get a new element which is completely determined by the
  action of $b$ on~$Q$. So, altogether, $U(\enl A)$ has order
  $2|A|+|A_\mu|+2$.

  \ref{item:S-in-R-4} We already identified in~\ref{item:S-in-R-3} how
  the elements of~$C$ multiply and what are the resulting products.
  Thus, the multiplication table of~$U(\enl A)$ does not further
  depend on~$\Cl A$ than on the cardinalities of the sets $Q$, $A$ and
  $A_\mu$.

  \ref{item:S-in-R-5} This follows from the calculations
  in~\ref{item:S-in-R-3} and the equalities in~\ref{item:S-in-R-2}.
  
  \ref{item:S-in-R-6} Let $w\in\tilde{A}^+$ and suppose that
  $\delta(w)$ is not the empty transformation. By~\ref{item:S-in-R-2},
  we may assume that $w$ starts with a letter from $A'$ and ends with
  a letter from $A$. Then, for every letter $a\in A$, every occurrence
  of $a'$ must be followed in $w$ by $a$. As all such factors $a'a$
  are equal to $1_Q$, all but the first one may be deleted without
  changing the value of~$\delta(w)$. Thus, we may assume that $w=a'av$
  with $v\in A^+$. If $v$ is not the empty word, then
  $\delta(w)=\delta(v')$ where $v'$ is the image of $v$ under the
  homomorphism $A^+\to\tilde{A}^+$ that maps each letter $x\in A$ to
  $x'x^2$. As $\delta(v')$ is a product of elements of $B$, we are
  done.

  \ref{item:S-in-R-7} As $U(\enl A)$ contains the generators
  $\tilde{\delta}(a')$ and $\Dom\bigl(U(\enl A),T(\enl A)\bigr)$ is a
  subsemigroup of $T(\enl A)$, it suffices to show that there is a
  zigzag in~$T(\enl A)$ for $\tilde{\delta}(a)$ over $U(\enl A)$ for
  each $a\in A$. The following factorizations show how such a zigzag
  is obtained from the equality
  $\tilde{\delta}(aa'a)=\tilde{\delta}(a)$, the underlined factors
  being elements of~$C$ and whence of~$U(\enl A)$:
  \begin{displaymath}
    \tilde{\delta}(a)
    =\tilde{\delta}(a)\underline{\tilde{\delta}(a'a)}
    =\tilde{\delta}(a)\underline{\tilde{\delta}(a')}\tilde{\delta}(a)
    =\underline{\tilde{\delta}(aa')}\tilde{\delta}(a).
  \end{displaymath}
  This completes the proof of the proposition.  
\end{proof}

In the example of Figure~\ref{fig:R4FA}, while the semigroup $T(\Cl
A)$ has order $4^4=256$, computer calculations show that the
semigroup $T(\enl A)$ has order $4097=4^6+1$, with the dimensions of
each $\Cl D$-class multiplied by~$4$ and adding a zero. In particular,
the semigroup $T(\enl A)$ is regular. The only elements of~$T(\enl A)$
that do not lie in the top $\Cl J$-class are $\tilde{\delta}(a'acc')$
and zero. Although we do not need it for our purposes, one can show
that these numbers are no coincidence so that, in general, $|T(\enl
A)|=(|A|+1)^2\,|T(\Cl A)|+1$, provided no letter in the semiautomaton
$\Cl A$ acts as the identity.

\begin{thm}
  \label{t:from-CS0-to-S}
  Let $S$ be an arbitrary finite semigroup. Then $S$ embeds in a
  finite semigroup $T$ which has an epimorphically embedded
  subsemigroup $U$ which in turn embeds in a finite completely
  0-simple semigroup.
\end{thm}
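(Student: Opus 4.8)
The plan is to take for $T$ the transition semigroup of a suitable enlargement, so that the epimorphically embedded subsemigroup furnished by Proposition~\ref{p:S-in-R} is small enough to be described explicitly and then exhibited inside a Rees matrix semigroup. Fix a finite generating set $A$ of $S$ and let $\Cl A=(S^1,A,\delta)$ be the Cayley semiautomaton, with $s\,\delta(a)=sa$; it is complete and $T(\Cl A)\cong S$. Passing to the enlargement $\enl A=(\tilde{Q},\tilde{A},\tilde{\delta})$, set $T=T(\enl A)$ and $U=U(\enl A)$, both finite as subsemigroups of $PT_{\tilde{Q}}$. By Proposition~\ref{p:S-in-R}\ref{item:S-in-R-1} the semigroup $S\cong T(\Cl A)$ embeds in $T$ as the subsemigroup generated by $B$, and by Proposition~\ref{p:S-in-R}\ref{item:S-in-R-7} the inclusion $U\embedsin T$ is an epimorphism. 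Hence it remains only to embed $U$ in a finite completely $0$-simple semigroup.

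For this I would use the description of $U$ obtained in the proof of Proposition~\ref{p:S-in-R}\ref{item:S-in-R-3}. Writing $z=1_Q$, $e_a=\tilde{\delta}(aa')$, $r_a=\tilde{\delta}(a')$ for $a\in A$, and $f_b=\tilde{\delta}(a'abb')$ for $b\in A_\mu$, the underlying set of $U$ is $\{0,z\}\cup\{e_a:a\in A\}\cup\{r_a:a\in A\}\cup\{f_b:b\in A_\mu\}$, with $z$ and the $e_a$ idempotent and $r_a^2=f_b^2=0$; moreover the multiplication ``decomposes over $A$'', in that any product of elements indexed by distinct letters vanishes, as does any product $x\cdot z$ with $x\ne z$, while within the block of a letter $a$ one has $z r_a=r_a$, $z f_a=f_a$, $r_a e_a=r_a$, $f_a e_a=f_a$, and $z e_a$ equals $f_a$ if $a\in A_\mu$ and $r_a$ otherwise, the only further nonzero products being $z^2=z$ and $e_a^2=e_a$. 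Now put $X=A\uplus\{\ast\}$, let $G=\mathbb Z/2\mathbb Z=\{1,g\}$, and let $P=(p_{xy})_{x,y\in X}$ be the matrix over $G\uplus\{0\}$ with $p_{\ast\ast}=1$, with $p_{\ast a}=1$ and $p_{a\ast}=0$ for $a\in A$, and with $p_{ab}=1$ for $a=b$ and $p_{ab}=0$ for $a\ne b$ when $a,b\in A$. Every row and every column of $P$ has a nonzero entry, so $M=\Cl M^0(X,G,X,P)$ is a finite completely $0$-simple semigroup, and one checks that the map $U\to M$ sending $0$ to $0$, $z$ to $(\ast,1,\ast)$, $e_a$ to $(a,1,a)$, $f_b$ to $(\ast,1,b)$, and $r_a$ to $(\ast,1,a)$ if $a\notin A_\mu$ and to $(\ast,g,a)$ if $a\in A_\mu$, is an injective homomorphism of semigroups. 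Together with the first step this proves the theorem.

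The verification that this map preserves products is a direct, if lengthy, case analysis against the multiplication recalled above (for instance $z e_a=(\ast,1,\ast)(a,1,a)=(\ast,p_{\ast a},a)=(\ast,1,a)$, agreeing with $f_a$ or $r_a$ as required, whereas $e_a z=(a,1,a)(\ast,1,\ast)=(a,p_{a\ast},\ast)=0$); so the only genuine obstacle is choosing the structure matrix $P$ and the assignment above correctly. The subtle point guiding that choice is that, with respect to $z$ and $e_a$, the elements $r_a$ and $f_a$ have identical behaviour on both Rees coordinates, which forces the use of a nontrivial structure group in order to keep the map injective; once $P$ and the map are in place, injectivity is clear, since the first coordinate isolates the idempotents $e_a$, the last coordinate distinguishes the blocks, and within a block the group coordinate separates $r_a$ from $f_a$.
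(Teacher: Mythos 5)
Your proposal is correct, and the first half (Cayley semiautomaton, enlargement, $T=T(\enl A)$, $U=U(\enl A)$, items \ref{item:S-in-R-1} and \ref{item:S-in-R-7} of Proposition~\ref{p:S-in-R}) coincides with the paper's argument. Where you genuinely diverge is in the remaining step, embedding $U$ into a finite completely 0-simple semigroup. The paper does this indirectly: by item \ref{item:S-in-R-4}, the isomorphism type of $U(\enl A)$ depends only on $|Q|$, $|A|$ and $|A_\mu|$, so one may replace $\Cl A$ by a semiautomaton in which every moving letter acts as the same $|Q|$-cycle; then $T(\Cl A)$ is a cyclic group and, by items \ref{item:S-in-R-1} and \ref{item:S-in-R-6}, $T(\enl A)$ itself is completely 0-simple, so the copy of $U$ sitting inside it is already where we want it. You instead extract the explicit multiplication table of $U$ from the proof of item \ref{item:S-in-R-3} and build a concrete Rees matrix oversemigroup $\Cl M^0(A\uplus\{\ast\},\mathbb Z/2\mathbb Z,A\uplus\{\ast\},P)$ by hand. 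I checked your table against the construction ($z=1_Q$, $r_a$ with domain $Q$ and image $Q_a$, $e_a$ fixing $Q_a$ pointwise, $f_b$ the restriction of $e_b$ to $Q$) and the claimed homomorphism; both are correct, and the image has the right cardinality $2|A|+|A_\mu|+2$, so the map is injective. What each approach buys: the paper's route avoids all computation with the table and needs items \ref{item:S-in-R-4} and \ref{item:S-in-R-6}, whereas yours needs neither of those items but requires the case analysis, and in exchange produces an explicit completely 0-simple container of order $2(|A|+1)^2+1$ over the two-element group. One cosmetic remark: the nontrivial structure group is forced only relative to your choice $X=A\uplus\{\ast\}$ (one could alternatively keep the group trivial and add an extra row to separate $r_a$ from $f_a$), but this does not affect the validity of your argument.
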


\begin{proof}
  We may choose a set $A$ of generators for~$S$ and take $\Cl
  A=(Q,A,\delta)$ to be the corresponding Cayley semiautomaton, so
  that $T(\Cl A)$ and $S$ are isomorphic. By
  Proposition~\ref{p:S-in-R}, if we take $T=T(\enl A)$ and $U=U(\enl
  A)$, it only remains to show that $U$ embeds in a finite completely
  0-simple semigroup. As the structure of $U$ only depends on the
  cardinalities of $Q$, $A$, and $A_\mu$, we may modify the
  semiautomaton $\Cl A$ in such a way that all elements of $A_\mu$ act
  as the same $|Q|$-cycle. Then $T(\Cl A)$ is a group and $T$ is
  completely 0-simple as, by items \ref{item:S-in-R-1} and
  \ref{item:S-in-R-6} of Proposition~\ref{p:S-in-R}, $T\setminus\{0\}$
  is a regular $\Cl J$-class.
\end{proof}

Direct calculation with the modified semiautomaton of the proof of
Theorem~\ref{t:from-CS0-to-S} in the case where $|Q|=|A|=3$, and
$|A_\mu|=1$ shows that the resulting semigroups $T(\enl A)$ and
$U(\enl A)$ have respective orders 49 and 9. The following is a Rees
matrix representation of $T(\enl A)$ over the order~3 group $C_3$
generated by $g$:
\begin{displaymath}
  \Cl M^0(4,C_3,4,P)
  \quad\text{with }
  P=
  \begin{pmatrix}
    1&1&1&1 \\
    0&g&0&0 \\
    0&0&1&0 \\
    0&0&0&1
  \end{pmatrix}
\end{displaymath}

\subsection{Main results}
\label{sec:main}

Combining Theorems~\ref{t:from-Y-to-CS0} and~\ref{t:from-CS0-to-S},
we obtain the following result.

\begin{thm}
  \label{t:ECV-Y}
  The smallest F-epimorphically closed pseudovariety of semigroups
  containing $Y$ is the pseudovariety of all finite semigroups.
\end{thm}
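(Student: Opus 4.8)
The plan is to obtain Theorem~\ref{t:ECV-Y} by simply assembling Theorems~\ref{t:from-Y-to-CS0} and~\ref{t:from-CS0-to-S}; no further construction is required. First I would dispose of the easy half: the pseudovariety $\pv S$ of all finite semigroups is F-epimorphically closed for the trivial reason that an epimorphism to a finite semigroup has finite codomain, and of course $Y\in\pv S$. So it suffices to prove that \emph{every} F-epimorphically closed pseudovariety $\pv W$ with $Y\in\pv W$ coincides with $\pv S$. Note that uniqueness of such a pseudovariety makes it in particular the smallest one, so we need not worry about whether an arbitrary intersection of F-epimorphically closed pseudovarieties is again F-epimorphically closed.

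Fix such a $\pv W$. Since $\pv W$ is a pseudovariety containing $Y$, it contains $\V\{Y\}$. The first step is to show that $\pv W$ contains every finite completely 0-simple semigroup $S$. By Theorem~\ref{t:from-Y-to-CS0}, $S$ embeds in a finite semigroup $T$ that has an epimorphically embedded subsemigroup $U\in\V\{Y\}\subseteq\pv W$; the inclusion $U\embedsin T$ is, by definition of epimorphic embedding, an epimorphism to a finite semigroup with domain in~$\pv W$, so F-epimorphic closure gives $T\in\pv W$, and closure under subsemigroups then gives $S\in\pv W$. The second step bootstraps this to all of $\pv S$: given an arbitrary finite semigroup $S$, Theorem~\ref{t:from-CS0-to-S} embeds $S$ in a finite semigroup $T$ having an epimorphically embedded subsemigroup $U$ which in turn embeds in a finite completely 0-simple semigroup; by the first step and closure under subsemigroups, $U\in\pv W$, hence (applying F-epimorphic closure to $U\embedsin T$) $T\in\pv W$, and finally $S\in\pv W$ because $\pv W$ is closed under subsemigroups. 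Therefore $\pv W=\pv S$, as required. In the notation of Section~\ref{sec:epiclosed} this is precisely the chain $\Sub\Epi\Sub\Epi\V\{Y\}=\pv S$.

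There is no genuine obstacle here: all the substance lies in the two cited theorems, and the present argument is a purely formal two-stage bootstrap. The only points demanding a little care are bookkeeping ones — that an epimorphic embedding $U\embedsin T$ is an honest epimorphism to a finite semigroup, that a pseudovariety is automatically closed under subsemigroups and contains the pseudovariety it generates (so $\V\{Y\}\subseteq\pv W$), and that each appeal to F-epimorphic closure and to closure under subsemigroups is made in the right order.
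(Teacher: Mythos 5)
Your proposal is correct and is exactly the paper's argument: the paper proves Theorem~\ref{t:ECV-Y} by "combining Theorems~\ref{t:from-Y-to-CS0} and~\ref{t:from-CS0-to-S}", leaving implicit precisely the two-stage bootstrap (first to completely 0-simple semigroups, then to all finite semigroups) that you spell out. The bookkeeping steps you flag — that an epimorphic embedding into a finite semigroup triggers F-epimorphic closure, and that pseudovarieties are closed under subsemigroups — are all that is needed, and you apply them in the right order.
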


Combined with \cite[Proposition~11.8.1]{Almeida:1994a} and
Corollary~\ref{c:Vi-saturated}, Theorem~\ref{t:ECV-Y} achieves the
classification of all F-epimorphically closed pseudovarieties of
semigroups.

\begin{thm}
  \label{t:epiclosed-pseudovarieties}
  For a pseudovariety \pv V, exactly one of the following alternatives
  holds:
  \begin{enumerate}
  \item \pv V is F-saturated (whence F-epimorphically closed) and
    contained in one of the pseudovarieties $\op
    (exf)^{\omega+1}=exf\cl$, $\op(ef)^\omega exf=exf\cl$ or $\op
    exf(ef)^\omega=exf\cl$;
  \item $\pv V$ is not F-saturated and no proper F-epimorphically
    closed pseudovariety contains \pv V.
  \end{enumerate}
  To determine whether it is the second alternative that holds, it
  suffices to check whether $Y\in\pv V$. In particular, it is
  decidable whether a given pseudovariety with decidable membership
  problem is F-saturated. In case \pv V is a proper subpseudovariety
  of~\pv S, then \pv V is F-epimorphically closed if and only if it is
  F-saturated.
\end{thm}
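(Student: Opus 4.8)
The plan is to reduce the whole theorem to the bidirectional equivalence asserting that the following conditions on a pseudovariety \pv V are equivalent: (a)~\pv V is F-saturated; (b)~$Y\notin\pv V$; (c)~\pv V is contained in one of $\pv V_1,\pv V_2,\pv V_3$. All of the substantive work has already been done in Theorems~\ref{t:V1-saturated} and~\ref{t:ECV-Y}, Corollary~\ref{c:Vi-saturated}, and \cite[Proposition~11.8.1]{Almeida:1994a}; what remains is bookkeeping, once the equivalence is available.

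First I would run the cycle of implications. For (a)$\Rightarrow$(b): the inclusion $Y\embedsin B_2$ recalled in Section~\ref{sec:intro} is an epimorphism to a finite semigroup properly containing $Y$, so $Y$ itself is not F-saturated, and hence neither is any pseudovariety containing it. For (b)$\Rightarrow$(c): if $Y\notin\pv V$, then \pv V sits inside some pseudovariety maximal for the property of not containing $Y$ (the union of a chain of pseudovarieties avoiding $Y$ is again such a pseudovariety, so Zorn's Lemma applies), and by \cite[Proposition~11.8.1]{Almeida:1994a} such a maximal pseudovariety is one of $\pv V_1,\pv V_2,\pv V_3$. For (c)$\Rightarrow$(a) (and also (c)$\Rightarrow$(b)): each $\pv V_i$ is F-saturated, by Theorem~\ref{t:V1-saturated} for $i=1$ and Corollary~\ref{c:Vi-saturated} for $i=2,3$, it omits $Y$, and F-saturation, being a property of individual members, passes to subclasses.

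With the equivalence in hand the dichotomy follows at once. If \pv V is F-saturated, then (a)$\Rightarrow$(c) places it inside some $\pv V_i$, and it is automatically F-epimorphically closed: given an epimorphism $\varphi\colon S\to T$ with $S\in\pv V$ and $T$ finite, the image $S\varphi$ lies in \pv V (a homomorphic image of $S$), the corestriction $S\varphi\embedsin T$ is still an epimorphism, and F-saturation of $S\varphi$ then forces $S\varphi=T\in\pv V$; this is alternative~(1). If \pv V is not F-saturated, then $Y\in\pv V$ by the contrapositive of (b)$\Rightarrow$(a), so every F-epimorphically closed pseudovariety $\pv W$ containing \pv V contains $Y$, hence contains the smallest F-epimorphically closed pseudovariety containing $Y$, which by Theorem~\ref{t:ECV-Y} is \pv S; thus $\pv W=\pv S$ is not proper, which is alternative~(2). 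The two alternatives are plainly mutually exclusive, and by the above they are exhaustive. Since (b) is tested by checking membership of the single semigroup $Y$, F-saturation is decidable whenever membership in \pv V is; and if a \emph{proper} subpseudovariety of \pv S were F-epimorphically closed but not F-saturated, it would contradict alternative~(2) applied to itself, so for proper \pv V, F-epimorphic closure is equivalent to F-saturation.

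The only steps that need any care are the Zorn's Lemma argument together with the correct invocation of \cite[Proposition~11.8.1]{Almeida:1994a} to identify the maximal pseudovarieties avoiding $Y$, and the short verification that an F-saturated pseudovariety is F-epimorphically closed; I do not foresee a genuine obstacle beyond assembling the previously established results in the right order.
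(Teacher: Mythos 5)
Your proposal is correct and follows essentially the same route as the paper, which proves this theorem simply by combining \cite[Proposition~11.8.1]{Almeida:1994a} (the maximal pseudovarieties avoiding $Y$ are $\pv V_1,\pv V_2,\pv V_3$), Theorem~\ref{t:V1-saturated} and Corollary~\ref{c:Vi-saturated} (each $\pv V_i$ is F-saturated), the zigzag exhibiting $Y$ epimorphically embedded in $B_2$, and Theorem~\ref{t:ECV-Y}. You merely make explicit the bookkeeping (the Zorn argument and the observation that F-saturation implies F-epimorphic closure) that the paper leaves implicit.
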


\section{Final remarks}
\label{sec:final}

Note that our results do not suffice to characterize the finite
saturated semigroups. By Theorem~\ref{t:V1-saturated} every semigroup
from~$\pv{V}_1$ is saturated. The following are natural questions that
we leave open:
\begin{itemize}
\item Is every element of $\pv{V}_2$ saturated?
\item Is there some non-regular saturated finite semigroup generating
  a pseudovariety containing~$Y$?
\item Is it decidable whether a finite semigroup is saturated?
\end{itemize}

We conclude with a brief discussion of epimorphically closed varieties
of semigroups. Note that Theorem~\ref{t:ECV-Y} has the following
consequence.

\begin{cor}
  \label{c:ECVar-Y}
  There is no proper epimorphically closed variety of semigroups
  containing $Y$.
\end{cor}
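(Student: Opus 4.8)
The plan is to derive Corollary~\ref{c:ECVar-Y} from Theorem~\ref{t:ECV-Y} by exploiting the fact that a variety of semigroups is in particular closed under homomorphic images, subsemigroups, and \emph{finite} direct products, so its class of finite members is a pseudovariety. First I would recall that if $\Cl W$ is a variety containing $Y$, then the class $\Cl W_{\mathrm{fin}}$ of finite members of $\Cl W$ is a pseudovariety, and it contains $Y$ since $Y$ is finite. The crucial observation is that if $\Cl W$ is epimorphically closed (as a class of semigroups), then $\Cl W_{\mathrm{fin}}$ is F-epimorphically closed: given an epimorphism $S\to T$ with $S\in\Cl W_{\mathrm{fin}}$ and $T$ finite, the epimorphism property in the category of semigroups coincides with the epimorphism property witnessed by morphisms into arbitrary semigroups, so $T\in\Cl W$, and $T$ being finite gives $T\in\Cl W_{\mathrm{fin}}$. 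Here it matters that an epimorphism $S\to T$ in the full category of semigroups is \emph{a fortiori} an epimorphism when only finite codomains are tested, but we need the reverse implication; however since $\alpha\colon S\to T$ with $T$ finite is an epimorphism in $\mathbf{Sgp}$ exactly when $\Dom(S\alpha,T)=T$, and that dominion condition is intrinsic to the pair $(S\alpha,T)$, membership in $\Cl W$ of $T$ follows from epimorphic closure of $\Cl W$ directly.

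Having established that $\Cl W_{\mathrm{fin}}$ is an F-epimorphically closed pseudovariety containing $Y$, Theorem~\ref{t:ECV-Y} forces $\Cl W_{\mathrm{fin}}=\pv S$, the pseudovariety of all finite semigroups. It remains to conclude that a variety $\Cl W$ all of whose finite members form $\pv S$ must be the variety of all semigroups. This is the step I expect to be the only real content beyond bookkeeping: one must argue that a variety containing every finite semigroup contains every semigroup. The standard way is via Birkhoff's theorem: $\Cl W$ is defined by a set $\Sigma$ of identities, and an identity $u=v$ (in finitely many variables $x_1,\dots,x_n$) fails in the variety of all semigroups precisely when $u\ne v$ as elements of the free semigroup $(x_1,\dots,x_n)^+$; but if $u\ne v$ in the free semigroup, then $u$ and $v$ are distinguished already in some finite semigroup --- indeed the free semigroup on $n$ letters is residually finite --- and that finite semigroup lies in $\Cl W=\Cl W_{\mathrm{fin}}\cup(\text{infinite members})$, contradicting $u=v\in\Sigma$. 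Hence $\Sigma$ contains only trivially valid identities and $\Cl W$ is the variety of all semigroups, which of course does not properly contain $Y$ --- it contains $Y$, but is not \emph{proper}. Thus no \emph{proper} epimorphically closed variety contains $Y$.

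The main obstacle, such as it is, is making the residual finiteness argument clean: one should phrase it as ``the free semigroup on a finite set is residually finite, hence two distinct words are separated by a homomorphism onto a finite semigroup,'' which is elementary (e.g. map into the syntactic semigroup of the singleton language $\{u\}$, or use the transition semigroup of a suitable deterministic automaton). Once that is in hand, the chain is: epimorphically closed variety $\Cl W\ni Y$ $\Rightarrow$ $\Cl W_{\mathrm{fin}}$ is an F-epimorphically closed pseudovariety $\ni Y$ $\Rightarrow$ $\Cl W_{\mathrm{fin}}=\pv S$ by Theorem~\ref{t:ECV-Y} $\Rightarrow$ every finite semigroup is in $\Cl W$ $\Rightarrow$ by residual finiteness and Birkhoff, $\Cl W$ is the variety of all semigroups, so it is not proper. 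This gives Corollary~\ref{c:ECVar-Y}.
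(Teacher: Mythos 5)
Your proposal is correct and follows essentially the same route as the paper: pass to the pseudovariety of finite members of the variety, observe it is F-epimorphically closed and contains $Y$, invoke Theorem~\ref{t:ECV-Y} to conclude it equals $\pv S$, and then use residual finiteness of free semigroups to deduce that the variety satisfies no nontrivial identity. The extra care you take in checking that epimorphic closure of the variety yields F-epimorphic closure of its finite part is a detail the paper glosses over, but the argument is the same.
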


\begin{proof}
  Suppose that the epimorphically closed variety of semigroups $\Cl V$
  contains $Y$. It follows that the finite members of~$\Cl V$
  constitute an epimorphically closed pseudovariety $\pv V$ of
  semigroups containing~$Y$. By Theorem~\ref{t:ECV-Y}, we conclude
  that $\pv S=\pv V\subseteq\Cl V$. Since it is well-known that free
  semigroups are residually finite (equivalently, $\pv S$ satisfies no
  nontrivial semigroup identity), we deduce that $\Cl V$ is the
  variety of all semigroups.
\end{proof}

While there are many partial results on saturated semigroups
(see~\cite{Khan:1983,Howie:1995,Khan&Shah:2010a}), the problem of
classifying which varieties of semigroups are saturated seems to
remain wide open. Corollary~\ref{c:ECVar-Y} significantly restricts
the set of varieties that needs to be considered, namely to those that
do not contain the semigroup~$Y$. Currently, the problem remains open
even for varieties of bands \cite{Ahanger&Nabi&Shah:2023}.

\section*{Acknowledgments}

The first author acknowledges partial support by CMUP (Centro de
Matemática da Universidade do Porto), member of LASI (Intelligent
Systems Associate Laboratory), which is financed by Portuguese funds
through FCT (Fundação para a Ciência e a Tecnologia, I. P.) under the
project UIDB/00144.

The second author acknowledges travel support by SERB (Science and
Engineering Research Board) Government of India in the form of MATRICS
(Mathematical Research Impact-centric Support) grant MTR/2023/000944.
The financial support by CMUP under the above-mentioned project
UIDB/00144 to cover the local expenses during his visit to Porto at
the end of 2024 is also gratefully acknowledged. Special thanks to the
first author for inviting him to visit CMUP making this work possible.

\bibliographystyle{amsplain}
\bibliography{sgpabb,ref-sgps}
\end{document}